\documentclass[a4paper]{amsart}

\usepackage{amsmath, amscd}
\usepackage{amsfonts}
\usepackage{amssymb}
\usepackage{graphicx}
\usepackage{hyperref}
\usepackage{color}%erase
\usepackage{soul,xcolor} %erase
\usepackage{cite}
\usepackage{comment}
\usepackage{tikz-cd}
\usepackage{tikz}
\usetikzlibrary{positioning}

\usepackage{mathtools}

\usepackage[alphabetic,initials]{amsrefs}

\usepackage[
    margin=2.5cm,
    top=2cm,
    bottom=2cm,
    includefoot,
    footskip=20pt,
]{geometry}

\usepackage{amsthm}
\usepackage[all,cmtip]{xy}

\newcommand{\HHom}{\mathcal{H}\kern -.5pt om}

\newcommand{\CC}{\#(Supp(\mathcal{C}) \cap F) }

\newcommand{\QQ}{\mathbb Q}

\newcommand{\OO}{\mathcal O}

\DeclareMathOperator{\N}{N}

\DeclareMathOperator{\codim}{codim}
\DeclareMathOperator{\Hom}{Hom}

\DeclareMathOperator{\Pic}{Pic}
\DeclareMathOperator{\Ext}{Ext}
\DeclareMathOperator{\NE}{\overline{NE}}

\DeclareMathOperator{\rk}{rk}

\DeclareMathOperator{\Nef}{Nef}
\DeclareMathOperator{\Mov}{\overline{Mov}}
\DeclareMathOperator{\Eff}{\overline{Eff}}
\DeclareMathOperator{\Amp}{Amp}

\DeclareMathOperator{\length}{length}

\theoremstyle{plain}
\newtheorem{theorem}{Theorem}[section]
\newtheorem{proposition}[theorem]{Proposition}
\newtheorem{lemma}[theorem]{Lemma}
\newtheorem{corollary}[theorem]{Corollary}

\newtheorem{claim}[theorem]{Claim}

\theoremstyle{definition}

\newtheorem{questions}[theorem]{Questions}

\newtheorem{cor}[theorem]{Corollary}
\newtheorem{defn}[theorem]{Definition}
\newtheorem{thm}[theorem]{Theorem}

\newtheorem{prop}[theorem]{Proposition}

\newtheorem{notn}[theorem]{Notation}
\newtheorem{assump}[theorem]{Assumptions}
\newtheorem{rmk}[theorem]{Remark}

\newcommand{\bi}{\begin{itemize}}  
\newcommand{\ei}{\end{itemize}}
\newcommand{\bp}{\begin{proof}}
\newcommand{\ep}{\end{proof}}

\theoremstyle{remark}

\def\CC{\mathbb{C}}

\def\PP{\mathbb{P}}
\def\QQ{\mathbb{Q}}
\def\RR{\mathbb{R}}
\def\ZZ{\mathbb{Z}}

\def\ov{\overline}

\def\al{\alpha}
\def\be{\beta}
\def\de{\delta}

\def\ga{\gamma}

\def\la{\lambda}

\def\cC{\mathcal{C}}
\def\cD{\mathcal{D}}
\def\cE{\mathcal{E}}

\def\cI{\mathcal{I}}

\def\cL{\mathcal{L}}

\def\cN{\mathcal{N}}
\def\cO{\mathcal{O}}

\def\cU{\mathcal{U}}
\def\cW{\mathcal{W}}

\def\ch{\text{ch}}
\def\codim{\text{codim}}
\def\dim{\text{\rm dim}}

\def\Bl{\text{\rm Bl}}

\def\Ext{\text{\rm Ext}}

\def\Hom{\text{\rm Hom}}
\def\h{\text{h}}
\def\HH{\text{H}}

\def\Pic{\text{\rm Pic}}

\def\dra{\dashrightarrow}

\def\ra{\rightarrow}

\title[Blowups, Gale duality, and moduli spaces]{Blowups, Gale duality, and moduli spaces}

\author[C. Araujo]{Carolina Araujo}
\address{Carolina Araujo, IMPA, Estrada Dona Castorina 110, 22460-320 Rio de Janeiro, Brazil}
\email{caraujo@impa.br}
\author[A.-M. Castravet]{Ana-Maria Castravet}
\address{Ana-Maria Castravet, 
Universit\'e Paris-Saclay, UVSQ, Laboratoire de Math\'ematiques de Versailles, 
45 Avenue des \'Etats Unis, 78035 Versailles, France }
\email{ana-maria.castravet@uvsq.fr}
\author[I. Kaur]{Inder Kaur}
 \address{Inder Kaur, School of Mathematics \& Statistics, University of Glasgow, Glasgow G12 8QQ, U.K.}
 \email{inder.kaur@glasgow.ac.uk}
\author[D. Martinelli]{Diletta Martinelli}
\address{Diletta Martinelli, KdV Institute for Mathematics, University of Amsterdam, P.O.Box 94248, 1090 GE Amsterdam, Netherlands
}
\email{d.martinelli@uva.nl}

\begin{document}

\begin{abstract}
The goal of this paper is to describe the birational geometry of the blowup of $\PP^n$ at $n+4$ points in very general position.
To achieve this, we follow an idea of Mukai and explore a special instance of Gale duality, namely, a correspondence between configurations of $n+4$ points in the projective spaces $\mathbb{P}^n$ and $\mathbb{P}^2$.
We first prove that the blowup $X$ of $\mathbb{P}^n$ at $n+4$ very general points is isomorphic to a certain Gieseker moduli space of rank $2$ vector bundles on the surface $S$ obtained by blowing up $\mathbb{P}^2$ at the $n+4$ Gale dual points. 
We then study the variation of these moduli spaces as we vary the polarization $L$ on $S$, and translate this variation into a partial Mori chamber decomposition of $\Eff(X)$, describing  to some extent the birational geometry of $X$.
\end{abstract}

\subjclass[2020]{}
\keywords{}

\maketitle

\setcounter{tocdepth}{1}
\tableofcontents

\section{Introduction}\label{introduction}

Blowups of projective spaces at finite sets of points arise frequently in algebraic geometry and exhibit rich and interesting geometry.  When only a few points in general linear position are blown up, the birational geometry of the resulting varieties can be completely described by finite combinatorial data. However, the geometry becomes increasingly intricate as more points are blown up.
The goal of this paper is to describe the birational geometry of the blowup of $\PP^n$ at $n+4$ points in very general position.
By this, we mean describing its rational contractions, of which there may be infinitely many.
We note that, for $n\geq 5$, the minimal number of points for which the blowup of $\PP^n$ admits infinitely many rational contractions is precisely $n+4$.

The increasing complexity of the birational geometry as more points are blown up is nicely illustrated by the classical case of blowups of the projective plane.
The blowup of $\PP^2$ at up to three points in general linear position is a toric surface. If one blows up $k\geq 4$ points in the plane, the resulting surface is no longer toric. However, if $k\leq 8$ and the points are in general position, the resulting surface $S$ is a del Pezzo surface, and its birational geometry can still be completely described by finite data. In particular, $S$ contains finitely many $(-1)$-curves, each of which corresponds to a distinct blow-down morphism from $S$. On the other hand, as soon as $k\geq 9$ and the points are in sufficiently general position, the resulting surface has infinitely many $(-1)$-curves (\cite{Nagata}). 
To be precise, we say that a set of points in the plane is \emph{Cremona-general} if the points are in general linear position, i.e., no three points are collinear, and remain so after any finite sequence of Cremona transformations centered at three of the points (see Definition~\ref{gen_position}). 
It is important to note that the generality assumption on the position of the points is essential in Nagata's theorem.
For example, if an arbitrary number of points in the plane are blown up, but all lie on an irreducible conic, then the resulting surface contains only finitely many  $(-1)$-curves, and its geometry can be described by finite data (\cite[Theorem 1.2]{CT}). In this case, a Cremona transformation centered at three of the points maps the remaining points onto points lying on a single line.

The finiteness of $(-1)$-curves on blowups of $\PP^2$ at Cremona-general points is more naturally generalized to higher dimensions in the language of Mori dream spaces, which were introduced by Hu and Keel in \cite{HK}.
Before introducing them, let us review some important convex cones associated to a normal projective variety $X$. 
Recall that the N\'eron-Severi space $\N^1(X)$ of $X$ is the finite-dimensional real vector space of Cartier divisors modulo numerical equivalence,
$$
\N^1(X) = \big( \Pic(X)/\equiv\big)\otimes_{\ZZ}\RR.
$$
The \emph{nef cone} $\Nef(X)$, the \emph{movable cone} $\Mov(X)$, and the \emph{pseudoeffective cone} $\Eff(X)$ of $X$ are the closed convex cones in $\N^1(X)$ generated by classes of nef divisors, movable divisors (i.e., nonzero effective divisors without a divisorial component in their base loci), and effective divisors, respectively. The nef cone is the closure of the \emph{ample cone} $\Amp(X)$, which is generated by classes of ample divisors, and there are inclusions
$$
\overline{\Amp}(X) \ = \ \Nef(X) \ \subseteq \ \Mov(X) \ \subseteq \ \Eff(X)  \ \subseteq \ \N^1(X).
$$
We also recall that a \emph{small modification} is a birational map that is an isomorphism in codimension $1$.

A normal $\QQ$-factorial projective variety $X$ is a \emph{Mori dream space} if $\Pic(X)$ is finitely generated and the following conditions hold (see \cite[Definition 1.10 and Proposition 1.11]{HK}): 
\begin{enumerate}
\item The pseudoeffective cone $\Eff(X)$ of $X$ is generated by the classes of finitely many prime divisors.
\item The nef cone $\Nef(X)$ of $X$ is generated by the classes of finitely many semi-ample divisors.
\item There is a finite collection of small $\QQ$-factorial modifications
 $f_i: X \dasharrow X_i$, such that each $X_i$ satisfies (2) above, and the movable cone $\Mov(X)$ of $X$ satisfies
 $$
 \Mov(X) \ = \ \bigcup_i \  f_i^*(\Nef(X_i)).
 $$
\end{enumerate}
Mori dream spaces are characterized by the algebraic condition of finite generation of their Cox ring, and their birational geometry is encoded in a finite polyhedral structure: the pseudoeffective cone  $\Eff(X)$,
together with a decomposition into subcones, each corresponding to a rational contraction from $X$. 
This decomposition is called the \emph{Mori chamber decomposition} of $\Eff(X)$, and its restriction to $\Mov(X)$ is the decomposition described in (3) above.
By \cite{Batyrev-Popov}, the blowup $S$ of $\PP^2$ at $k$ Cremona-general points is a Mori dream space if and only if $k \leq 8$.

We now define the notion of \emph{Cremona-general} points in $\PP^n$ for $n\geq 2$, and introduce the action of the Weyl group, which plays a key role in our study of the birational geometry of blowups of projective spaces.

\begin{notn}\label{notation_Pic}
Let $P_1, \dots, P_{k}\in \PP^n$, with $k\geq n+1$, be distinct points and denote by $\pi:X\to \PP^n$ the blowup at this set of points. 
We denote by $E_i\subset X$ the exceptional divisor over $P_i$, as well as its class in $\Pic(X)$. 
We denote by $H\in \Pic(X)$ the pullback of the hyperplane class of $\PP^n$.
Then 
$$
\Pic(X) \ = \ \ZZ H \oplus \bigoplus_{i\in\{1, \dots,k\}}\ZZ E_i.
$$
For each subset $I\subset \{1, \dots,k\}$ of cardinality $n+1$, we denote by $X_I$ the blowup of $\PP^n$ at the points $\{P_{i}\}_{i\in I}$.
There is a natural birational morphism $X\to X_I$, and a natural isomorphism 
$$
\Pic(X)\ \cong \ \Pic(X_I) \oplus \bigoplus_{j\in\{1, \dots,k\}\setminus I}\ZZ E_j.
$$
Assume that the points are in general linear position, i.e., no $n+1$ of them are contained in a hyperplane.
For each subset $I\subset \{1, \dots,k\}$ of cardinality $n+1$, we denote by 
$$
\varphi_I:\PP^n\dashrightarrow\PP^n
$$
a standard Cremona transformation centered at the set points $\{P_i\}_{i\in I}$. 
This is a birational involution of $\PP^n$ that lifts to a small modification 
$\tilde \varphi_I:X_I\dashrightarrow X_I$, and thus
induces an involution  $\tilde\varphi_I^*:\Pic(X_I)\to\Pic(X_I)$.
\end{notn}

\begin{defn}[{Cremona-general points}]\label{gen_position}
Let $P=\big(P_1, \dots, P_{k}\big)$ be a set of marked points in general linear position in $\PP^n$, with $k\geq n+1$, up to projective transformation. 
For each subset $I\subset \{1, \dots,k\}$ of cardinality $n+1$, let $\varphi_I:\PP^n\dashrightarrow\PP^n$ be as in Notation~\ref{notation_Pic}. 
We define a new set of marked points $I_*P=\big(P_1^I, \dots, P_{k}^I\big)$ in $\PP^n$, up to projective transformation, as follows: 
$$
P_j^I \ = \  
	       \begin{cases}
P_j  & \text{ if } \ j\in I, \\
\varphi_I(P_j) & \text{ if } \ j\not\in I. 
\end{cases}
$$
We say that  the points $P_1, \dots, P_{k}$ are \emph{Cremona-general} if they are in general linear position and, for any finite sequence 
$I_1, \dots ,I_m$ of subsets  of cardinality $n+1$ of $\{1, \dots,k\}$, the new set of marked points $(I_m)_* \dots (I_1)_*P$ is still in general linear position.
\end{defn}

Throughout this paper, we denote by $\Bl_k\PP^n$ the blowup of $\PP^n$ at $k$  Cremona-general points.

\begin{defn}[{The Weyl group action}]\label{Weyl}
Let $\Bl_k\PP^n$ the blowup of $\PP^n$ at $k\geq n+1$ Cremona-general points $P_1, \dots, P_{k}$. 
For each subset $I\subset \{1, \dots,k\}$ of cardinality $n+1$, let $\varphi_I:\PP^n\dashrightarrow\PP^n$ be as in Notation~\ref{notation_Pic}.
We extend the involution $\tilde\varphi_I^*:\Pic(X_I)\to\Pic(X_I)$ to an involution 
$$
\varphi_I^*:\Pic(\Bl_k\PP^n)\to\Pic(\Bl_k\PP^n)
$$
by setting $\varphi_I^*(E_j)=E_j$ for $j\in\{1, \dots,k\}\setminus I$.
Clearly, $\varphi_I^*\big(K_{\Bl_k\PP^n}\big)=K_{\Bl_k\PP^n}$.
We denote by the same symbol the corresponding linear involution $\varphi_I^*:\N^1(\Bl_k\PP^n)\to \N^1(\Bl_k\PP^n)$, and  
define the group:
$$
\cW_{n,k} := \big\langle \ \varphi_I^* \ | \ I\subset \{1, \dots,k\} \text{ with } |I|=n+1 \ \big\rangle \subset GL\big(\N^1(\Bl_k\PP^n)\big).
$$
\end{defn}

When $k> n+1$, the group  $\cW_{n,k}$ admits an alternative set of generators: 
$$
\cW_{n,k} = \langle s_0, s_1, \dots, s_{k-1}\rangle,
$$
where $s_0=\varphi_{\{1,\dots, n+1\}}^*$ and,  for $1\leq i\leq k-1$, $s_i$ swaps $E_i$ and $E_{i+1}$ and fixes the other basis elements.
We refer to Section~\ref{subsection_blowups} for details.
As a consequence, we see that $\cW_{n,k}$ is isomorphic to the Coxeter group associated to the Dynkin diagram $T_{n+1, k-n-1,2}$:
\begin{center}
\begin{tikzpicture}[
    transform shape,
    scale=0.6,
    node font=\large\bfseries\sffamily,
    every node/.append style={circle,minimum size=7mm,line width=1.6pt},
    every edge/.append style={ultra thick},
    d/.style={draw},
    ]
    \node[d,label={$1$},scale=0.7] (1) {};
    \node[d,right=of 1, label={$2$},scale=0.7] (2) {} edge (1);
    \node[d,right=of 2,scale=0.7] (3) {} edge (2);
    \node[d,right=of 3,scale=0.7] (n) {};
    \path (3) -- node[auto=false]{\ldots} (n);
    \node[d,right=of n,label={[label distance=-2mm]${n+1}$},scale=0.7] (n+1) {} edge (n);
    \node[d,right=of n+1,scale=0.7] (n+2) {} edge (n+1);
    \node[d,right=of n+2,scale=0.7] (k-2) {};
    \path (n+2) -- node[auto=false]{\ldots} (k-2);
    \node[d,right=of k-2, label={[label distance=-2mm]${k-1}$},scale=0.7] (k-1) {} edge (k-2);
    \node[d,below=of n+1,scale=0.7] (0) {} edge (n+1);
\end{tikzpicture}
\end{center}
Note that when $k=n+1$, the group $\cW_{n,n+1}$ defined as above (a cyclic group of order $2$) differs from the Coxeter group $\langle s_0, s_1, \dots, s_{n}\rangle$ (with the $s_i$ defined as above) associated to the diagram $A_n+A_1$. 

The action of the Weyl group on $\Pic(\Bl_k\PP^n)$ and $\N^1(\Bl_k\PP^n)$ has been well explored as a tool for describing the birational geometry of  $\Bl_k\PP^n$. 
The study of this action can be traced back to the work of Coble (\cite{Coble1}, \cite{Coble2}, \cite{Coble3}).
We refer to \cite{Dolgachev} and \cite{DO} for a historical discussion together with a more modern account of this theory.
In \cite{Mukai01}, Mukai used the action of the Weyl group to show that $\Bl_k\PP^n$ is not a Mori dream space when $\cW_{n,k}$ is infinite.
Indeed, from its definition, we see that the action of $\cW_{n,k}$ on $\N^1(\Bl_k\PP^n)$ preserves the pseudoeffective cone $\Eff(\Bl_k\PP^n)$ (see Remark~\ref{rmk:cremona_action}).
Therefore, when $\cW_{n,k}$ is infinite, the pseudoeffective cone $\Eff(\Bl_k\PP^n)$ has infinitely many extremal rays, namely, one for each element $D$ in the $\cW_{n,k}$-orbit of $E_1$.
In fact, the works \cite{Mukai01} and \cite{CT} together establish that $\Bl_k\PP^n$ is a Mori dream space exactly when the Weyl group $\cW_{n,k}$ is finite:
$$
\Bl_k\PP^n \ \text{  is a Mori dream space } \ \iff \ 
	       \begin{cases}
n=2, \ k\le 8; \text{  or } \\
n=3, \ k\le 7; \text{  or } \\
n=4, \ k\le 8; \text{  or } \\
n\ge 5, \ k\le n+3. 
\end{cases}
$$
The action of the Weyl group $\cW_{3,8}$ was explored in \cite{LO16} and \cite{SX23} to describe the geometry of the blowup of $\PP^3$ at $8$ very general points. 

From the action of the Weyl group $\cW_{n,k}$ on $\Pic(\Bl_k\PP^n)$, one can extract a bilinear form on $\Pic(\Bl_{n+4}\PP^n)$, which coincides with the intersection product on surfaces when $n=2$. 
For $n>2$, this pairing appears explicitly in \cite[Section 3]{Mukai01}.
We discuss it in more detail and characterize it with respect to the action of the Weyl group in Section~\ref{subsection_blowups}.
In reference to Coble’s work, and following a suggestion of Igor Dolgachev, we call it the \emph{Coble pairing}.

\begin{defn}\label{Coble}
Let $\{H, E_1, \dots, E_{k}\}$ be the basis of $\Pic(\Bl_k\PP^n)$ introduced in Notation~\ref{notation_Pic} above. 
We define the Coble pairing on $\Pic(\Bl_k\PP^n)$ by:
\begin{itemize} 
	       \item  $(H,H)=n-1$;
	       \item  $(H, E_i) = 0$; and 
           \item  $(E_i, E_j) = -\delta_{ij}$.
\end{itemize} 
\end{defn}

\begin{notn}
For any divisor $D\in \Pic(\Bl_k\PP^n)$, the condition $(D> 0)$ with respect to the Coble pairing defines a half-space in $\N^1(\Bl_k\PP^n)$. 
Given any cone $\N\subseteq\N^1(\Bl_k\PP^n)$, 
we write $\N^{D> 0}$ for the intersection of $\N$ with this half-space, and similarly for $\N^{D< 0}$, $\N^{D\leq 0}$ and $\N^{D\geq 0}$.
\end{notn}

As we mentioned above, each element $D$ in the $\cW_{n,k}$-orbit of $E_1$ generates an extremal ray of the pseudoeffective cone $\Eff(\Bl_k\PP^n)$.
Moreover, any such $D$ satisfies $\big(K_{\Bl_k\PP^n},D\big)<0$. 
Conversely, it is natural to ask whether these constitute all the extremal rays of $\Eff(\Bl_k\PP^n)$ lying in the half-space $\big(K_{\Bl_k\PP^n}<0\big)$, as is the case for surfaces.

\begin{questions}[{\cite[Question 7.9]{BDPS}}] \label{questions}
Suppose that $n\geq 3$ and $k\geq n+4$, and let $X=\Bl_k\PP^n$.
\begin{enumerate}
\item Are all the extremal rays $R\subset \Eff(X)^{K<0}$ of the form $R=\RR_{\geq0}E$ for some $E\in \cW_{n,k}\cdot E_1$?
\item Is there a version of the Mori chamber decomposition on $\Eff(X)^{K<0}$ ?
\end{enumerate}
\end{questions}

When $n=3$ and $k=8$, $-K_{\Bl_8\PP^3}$ is nef and positive answers to Questions~\ref{questions} were given in \cite{SX23}. 

In this paper, we provide positive answers to Questions~\ref{questions} when $k=n+4$, and give a description of the birational geometry of $\Bl_{n+4}\PP^n$. Our main result is the following.

\begin{theorem}\label{thm:main}
	Let $X=\Bl_{n+4}\PP^n$ be the blowup of $\PP^n$ at $n+4$ Cremona-general points. 
    Then the following holds.
\begin{enumerate}
\item $\Eff(X)^{K\leq 0} \ = \ \overline{\displaystyle\sum_{E\in \cW_{n,k}\cdot E_1}\RR_{\geq0}E}$. 
\item There are countably many smooth small modifications $g_i \colon X \dashrightarrow Y_i$  such that
	$$\Mov(X)^{K\leq0}\  \ = \  \ \bigcup_i \  g_i^*(\Nef(Y_i)).$$
Moreover, if $g_i^*(\Nef(Y_i))\setminus\{0\}\subseteq (K_{X}<0)$, then the nef cone $\Nef(Y_i)$ is polyhedral. 
 \end{enumerate}    
 \end{theorem}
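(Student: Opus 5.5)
The proof will go through the moduli-theoretic model announced in the abstract. Let $S$ be the blowup of $\PP^2$ at the $n+4$ Gale dual points. I will use the (later proved) identification of $X=\Bl_{n+4}\PP^n$ with a Gieseker moduli space $M_L(v)$ of $L$-semistable rank $2$ sheaves on $S$ with fixed Chern character $v$, where $L$ is a suitable polarization.

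\textbf{Step 1: the isometry.} The first step is to build a linear isomorphism
$$\theta:\N^1(X)\to \N^1(S),\qquad H\mapsto (n-1)\ell-\textstyle\sum e_i\ \text{(suitably normalized)},\quad E_i\mapsto e_i\text{-type classes}.$$
It should have three properties:
\begin{itemize}
\item it sends the Coble pairing of Definition~\ref{Coble} to the intersection form on $S$ (up to sign and scaling);
\item it sends $K_X$ to a multiple of $K_S$;
\item it intertwines $\cW_{n,n+4}$ with the Weyl group $\cW_{2,n+4}$ acting on $\Pic(S)$.
\end{itemize}
To get the third property, I will check on the generators $s_0,\dots,s_{n+3}$ that both groups are the Coxeter group of $T_{n+1,3,2}$, with simple roots matched by $\theta$. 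This requires $\theta$ to exchange the "$n+1$ points" root with the "$3$ points" root. This exchange is exactly Gale duality and is visible from $T_{n+1,3,2}=T_{3,n+1,2}$ read the other way. Under $\theta$, the orbit $\cW_{n,k}\cdot E_1$ goes to the orbit of a $(-1)$-curve class, i.e.\ to the set of $(-1)$-curves on $S$ (Nagata/Cremona-general surface theory). The half-space $(K_X\le 0)$ goes to a half-space $(K_S\cdot\,\cdot\ \le 0)$ or its opposite, to be fixed by a sign check.

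\textbf{Step 2: part (1).} Part (1) then reduces to the surface statement that $\Eff(S)^{K_S\le 0}$ (appropriately signed) is the closure of the cone spanned by $(-1)$-curves. This is the classical consequence of the cone theorem on $S$, together with the fact that the $K_S$-trivial boundary is the closure of the orbit, via the accumulation of $(-1)$-curves to $K_S^\perp\cap\{D^2\ge0\}$.

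One also needs $\theta(\Eff(X))$ to match the correct cone on $S$. For the inclusion "$\supseteq$", it suffices that each $E$ in the orbit is effective, which is Remark~\ref{rmk:cremona_action}. For "$\subseteq$", I will argue by contradiction. Suppose $D$ is effective on $X$, with $(K_X,D)<0$, and lies outside the closed orbit cone. Then some element of $\cW$ and the pairing produce a class $w(D)$ that still lies in $\Eff(X)$ (by Weyl invariance of $\Eff$) but has negative $H$-degree or negative pairing with a movable curve. Since $X$ is a moduli space, the relevant movable curve families can be taken to be the images of lines in $\PP^n$ through $n-1$ points, together with their Weyl translates. This gives the contradiction.

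\textbf{Step 3: part (2), the wall-crossing.} For part (2), I vary the polarization $L$ across the ample cone of $S$ in the region corresponding to $K<0$. By Thaddeus/Matsuki–Wentworth-type VGIT for Gieseker moduli of rank $2$ sheaves, walls are given by classes $\xi$ with $\xi\cdot L=0$ and $\xi\equiv c_1\pmod 2$, subject to a discriminant bound. Each chamber $\mathcal C$ yields a moduli space $M_{\mathcal C}$. Whenever $M_{\mathcal C}$ is smooth and isomorphic to $X$ in codimension one (wall-crossings flip loci of codimension $\ge2$ when $n\ge 3$), we obtain a small modification $g_{\mathcal C}:X\dashrightarrow M_{\mathcal C}$.

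The Donaldson/determinant line bundle map $\lambda:\N^1(S)\supset\Amp\to\N^1(X)$ identifies the chamber $\mathcal C$ with $g_{\mathcal C}^*\Amp(M_{\mathcal C})$. Taking the Weyl translates of these chambers covers $\Mov(X)^{K\le0}$, since $\theta$ maps the union of $\Amp(S)$-chambers, their $\cW$-translates and limits onto $\Mov(X)^{K\le0}$. There are countably many chambers because the $(-1)$-curves are countable.

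\textbf{Step 4: polyhedrality.} If a chamber lies in $(K<0)$, it is bounded away from $K^\perp$. Only finitely many $(-1)$-curves and walls meet such a region, because of the discriminant bound and the accumulation of $(-1)$-curves only at $K_S^\perp$. Hence $\Nef(Y_i)$ is polyhedral.

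\textbf{Main obstacle.} I expect the hardest step to be the identification of $\Mov(X)^{K\le 0}$, including its boundary, with the union of chamber images. Specifically, this needs:
\begin{itemize}
\item every small modification chamber arises as a Gieseker chamber (or a Weyl translate of one);
\item the moduli spaces stay smooth and irreducible across walls.
\end{itemize}
I would attack this by showing that the walls of the Gieseker decomposition are exactly the Coble-orthogonals $(E,\cdot)=0$ of the orbit classes $E$ under $\theta$, and then invoking Weyl invariance.
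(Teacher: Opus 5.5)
There is a genuine gap, and it sits in Steps 1 and 2: the correspondence you set up between $X$ and $S$ is the wrong one. You assert that a $\cW$-intertwining $\theta$ carries $\cW_{n,k}\cdot E_1$ onto the $(-1)$-curves of $S$. You then reduce part (1) to a statement about $\Eff(S)^{K_S\le 0}$. For $n\ge 3$ this cannot happen:
\begin{itemize}
\item $E_{n+4}$ is Coble-orthogonal to every simple root except $\alpha_{n+3}$, the end of the arm of length $3$ in $T_{n+1,3,2}$. Its stabilizer is therefore the standard parabolic subgroup of type $T_{n+1,2,2}=D_{n+3}$.
\item $e_1$ is orthogonal to every simple root except $\beta_{k-1}=e_2-e_1$, which your Gale matching sends to $\alpha_1$, the end of the long arm. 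Its stabilizer has type $T_{n,3,2}$.
\item An equivariant isomorphism preserves stabilizers, and these two are not conjugate.
\end{itemize}
What is actually true, and what the paper uses, is that the determinant map $\rho$ of Theorem~\ref{rho map} satisfies $\rho(h-e_i)=2E_i$. This $\rho$ is $\cW$-equivariant, with $\rho(K_S)=K_X$ and $\rho|_{K_S^\perp}=2\phi_0$. So $\cW\cdot E_1$ corresponds to the conic classes $\cW\cdot(h-e_1)$, which span extremal rays of $\Nef(S)$. Consequently $\Eff(X)^{K\le 0}$ is the image of the cone $\cE\subset\Nef(S)^{K\le0}$ of polarizations with nonempty moduli space. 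This is a subcone of the nef cone of $S$, not of its effective cone. The $(-1)$-curves enter only dually, as facets, via $\rho(e_1^\perp)=(l-f_1)^\perp$. Note also that $\rho$ is not a similitude of the two forms on all of $\Pic$ unless $n=5$ (Remark~\ref{rho}). The relative scaling of the $K$- and $K^\perp$-components is forced: for $n\neq5$, imposing a global similitude makes $\theta(E_1)$ not even proportional to $h-e_1$.

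As a result, the ``$\subseteq$'' half of your Step 2 has no mechanism. That some $w\in\cW$ turns an effective class outside the orbit cone into one negative on a movable curve is exactly what must be proved. The family you propose, lines through $n-1$ of the points, is not movable: such lines do not exist for $n\ge4$, and for $n=3$ they are the rigid flipping curves $l-f_i-f_j$. The missing ingredient is the facet description of $\cE$. Its dual $\cE^\vee$ is generated by $-K$, the classes $e\in\cW\cdot e_1$, and $2B+K$ for $B\in\cW\cdot h$. The paper proves this by induction on $k$ from Lemma~\ref{E positive surface case}(1), which also gives $\cE=\overline{\sum_{C\in\cW\cdot(h-e_1)}\RR_{\ge0}C}$. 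Under $\rho$ these facets become $K_X^\perp$ and the $\cW$-translates of $(l-f_1)^\perp$ and $l^\perp$. These support $\Eff(X)^{K\le0}$ because $l$ and $l-f_1$ are movable and $\Eff(X)$ is $\cW$-invariant.

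The same misidentification affects part (2): the walls are not Coble-orthogonals of orbit classes. For example, $\rho$ maps $(2(h-e_i-e_j)+K)^\perp$ to $(l-f_i-f_j)^\perp$, the Coble-orthogonal of $\frac{1}{n-1}H-E_i-E_j$. What is needed instead has two parts:
\begin{itemize}
\item $\Pi$ is cut out by $K^\perp$, the hyperplanes $e^\perp$, and the conic walls $(2C+K)^\perp$ with $C\in\cW\cdot(h-e_1)$.
\item By Proposition~\ref{det 3}, $\rho$ sends a wall $(2D+K)^\perp$ bounding $\cC\subset(2D+K)^{>0}$ to $l_{-K-D}^\perp$. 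Here $l_{-K-D}$ is a line in the locus contracted at that wall; for conic walls this is a point-blowup contraction, hence a facet of $\Mov(X)$.
\end{itemize}
No Weyl translation of chambers is needed, since $\Pi$ is already $\cW$-invariant. Your wall-crossing setup and the local-finiteness argument of Step 4 do agree with the paper.
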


In order to prove Theorem~\ref{thm:main}, we follow an idea of Mukai and explore a special instance of \emph{Gale duality}.
Gale duality is a correspondence between sets of $k=n+s+2$ points in the projective spaces $\mathbb{P}^n$ and $\mathbb{P}^s$, considered up to projective equivalence.
This duality is defined purely in terms of linear algebra, but it often presents remarkable geometric manifestations. 
We refer to Section~\ref{subsection:gale} for the precise definition of Gale duality and its properties. 

In the simplest case, when $s=1$, Gale duality goes as follows: given $n+3$ marked points in $\mathbb{P}^n$, there is a unique rational normal curve  $C\subset \mathbb{P}^n$ of degree $n$ passing through these points. This determines a configuration of $n+3$ marked points in $\mathbb{P}^1\cong C$. 
Conversely, given $n+3$ marked points in $\mathbb{P}^1$, 
one embeds $\mathbb{P}^1$ as a rational normal curve of degree $n$ in $\mathbb{P}^n$, producing a corresponding configuration of 
$n+3$ marked points in $\mathbb{P}^n$. This correspondence was used in \cite{Mukai} to describe the Mori chamber decomposition of $\Eff(\Bl_{n+3}\PP^n)$. Mukai's idea goes as follows. 
In \cite{Bauer}, Bauer described the moduli spaces of semistable rank $2$ parabolic vector bundles on $\mathbb{P}^1$ with $n+3$ marked points in general position. 
He showed that, for a particular choice of weights, the moduli space is isomorphic to the blowup of $\PP^n$ at the $n+3$ Gale dual points. 
Moreover, he described explicitly how the stability conditions and the moduli spaces vary as one varies the weights. 
Mukai then observed that the weight space can be seen as a cross-section of the effective cone $\Eff(\Bl_{n+3}\PP^n)$, and that the stability chamber decomposition of the weight space coincides with the one induced by the Mori chamber decomposition of $\Eff(\Bl_{n+3}\PP^n)$.
This explicit description of the Mori chamber decomposition has been effectively used for studying the geometry of $\Bl_{n+3}\PP^n$ and its small modifications (see \cite{Mukai}, \cite{AM16}, \cite{BDP16}, \cite{AC17}, \cite{AFKM21}). 

We will exploit the next case, $s=2$, which gives a correspondence between sets of $n+4$ points in $\PP^2$, and $n+4$ points in $\mathbb{P}^n$, up to projective equivalence. In the special case when $n=4$, the blowup $X$ of $\PP^4$ at $8$ points in general position is a Mori dream space, and the blowup $S$ of $\PP^2$ at the $8$ Gale dual points is a del Pezzo surface. As outlined in \cite{Mukai} and worked out in detail in \cite{CCF}, $X$ can be realized as the moduli space of Gieseker semistable torsion free sheaves on $S$, for a particular choice of Chern classes, and a suitable choice of polarization $L\in \Amp(S)$. The variation of these moduli spaces under changes of polarization on a del Pezzo surface was extensively investigated in the 1990s, and it is governed by a wall-and-chamber decomposition of its ample cone.
The stability chamber decomposition of $\Amp(S)$ then translates to the Mori chamber decomposition of $\Eff(X)$.
We show that this strategy can be carried out for any value of $n\geq 2$. 
Our starting point to prove Theorem~\ref{thm:main} is the following (see Theorem~\ref{X as moduli}). 

\begin{theorem} \label{thm:moduli}
	Let $X=\Bl_{n+4}\PP^n$ be the blowup of $\PP^n$ at $n+4$ Cremona-general points, $n\geq 2$, and denote by $S$ the blowup of $\PP^2$ at the $n+4$ Gale dual points.
	Then there exists an ample line bundle $L$ on $S$, explicitly determined, such that $X$ is isomorphic to the moduli space of Gieseker $L$-semistable torsion-free sheaves $F$ on $S$ with $\rk(F)=2$, $\text{c}_1(F) = -K_S$, $\text{c}_2(F) = 2$.
\end{theorem}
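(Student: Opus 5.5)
The plan is to follow Mukai's strategy and the del Pezzo case worked out in \cite{CCF}, building the isomorphism through the wall-crossing description of the moduli spaces as the polarization varies. Let $M_L=M_L(2,-K_S,2)$. The expected dimension is
$$4c_2-c_1^2-3\chi(\cO_S)=8-(9-(n+4))-3=n,$$
which matches $\dim X$. I will also use that $\chi(F)=\chi(\cO_S)+\frac{c_1(c_1-K_S)}{2}\cdot\frac{2}{2}+\dots$, computed via Riemann–Roch, equals $n+2$ in this case, so each stable $F$ should have $h^0(F)=n+2$ and no higher cohomology.

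\textbf{Step 1: choose the starting chamber.} Take a polarization $L_0$ close to the pullback $H_S$ of the line class, for example $L_0=H_S-\epsilon\sum e_i$. Near $H_S$, stability is controlled by the blowdown to $\PP^2$. For $L$ in the boundary chamber, a stable $F$ should sit in a unique extension
$$0\to \cO_S(A)\to F\to I_Z(B)\to 0$$
with $A+B=-K_S$. The natural candidate is $A=H_S-\sum e_i$ paired with $B=2H_S$, but its degrees may not be right. I would instead compute the walls $\xi=2A+K_S$ satisfying $\xi^2\ge -4c_2+c_1^2$ and identify the ones adjacent to $H_S$. Concretely, the goal is a chamber in which every $F$ is a nonsplit extension
$$0\to\cO_S(D_1)\to F\to \cO_S(D_2)\otimes I_p\to 0,$$
with $p$ a point, or alternatively an elementary modification of a fixed bundle. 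The parameter space for such objects should be $\PP(\Ext^1)$-bundle-like and should produce $\PP^n$.

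\textbf{Step 2: realize $\PP^n$ via Gale duality.} The intended identification is that $\PP^n$ equals $\PP(H^0(S,\cO_S(-K_S)\otimes\ldots))$ in a form where the $n+4$ points of $\PP^n$ correspond to the $n+4$ exceptional curves. Gale duality then appears as the statement that the linear system of cubics through the $n+4$ points gives the dual configuration. I plan to find a single wall crossing from an empty or $\PP^n$ moduli space: in the chamber just past the first wall, $M_L\cong\PP(\Ext^1(\cO(B),\cO(A)))\cong\PP^n$. Here I need
$$h^1(\cO_S(A-B))=n+1,$$
computed by Riemann–Roch together with the vanishing of $h^0$ and $h^2$. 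The $n+4$ walls corresponding to the classes $\xi_i$ involving $e_i$ each contribute a flip that replaces a point by a $\PP^{n-1}$, which is exactly a blowup at a point. The locus being modified should be the extensions that become destabilized by $\cO(A+e_i)$. I would then check that these $n+4$ points of $\PP(\Ext^1)$ are the Gale dual points. This is a linear-algebra comparison: the extension classes killed by restricting to $e_i$ give points whose coordinates are the kernel of the evaluation of cubics, i.e.\ the Gale transform.

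\textbf{Step 3: smoothness and identification.} Show that $M_L$ is smooth of dimension $n$ and fine, using $\Ext^2(F,F)_0=\Hom(F,F\otimes K_S)_0^*=0$; this holds because $-K_S\cdot L>0$ for the relevant $L$ even when $-K_S$ is not ample, given the Cremona generality. Then the wall-crossing morphisms give an isomorphism with $\Bl_{n+4}\PP^n$ via Thaddeus-type flips, which in the one-point case are blowups. Finally, pick $L$ explicitly in the resulting chamber and check that no further walls separate it.

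\textbf{Main obstacle.} The hardest step is controlling all walls. For $n\ge 6$, $S$ is not del Pezzo and has infinitely many $(-1)$-curves, so I must prove that, for the chosen $L$, only the expected $n+4$ walls, or finitely many, occur. I must also show that no sheaf is destabilized by a subsheaf $\cO(D)$ with $D$ a class coming from a Cremona image of the $e_i$. This is where Cremona generality enters, through effectivity criteria for classes $D$ with $D\cdot(-K_S)$ small. I would attack it by bounding $\xi^2$ and $\xi\cdot L$ and reducing, via the Weyl group, to the $n\le 5$ del Pezzo situation.
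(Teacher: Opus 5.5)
Your outline (an empty chamber, one wall producing $\PP^n=\PP(\Ext^1)$, then $n+4$ walls each replacing a point by a $\PP^{n-1}$, with the centers identified as the Gale dual points) is the paper's architecture. However, the step you flag as the main obstacle is the real content of the proof, and the proposal does not supply it. You never fix a path of polarizations and show that it meets no walls other than the expected ones. Without that, nothing excludes extra flips: on the path $L_a=(a+3)h-\sum e_i$, which up to scaling is your $h-\epsilon\sum e_i$, the numerical lines $D=e_i$ do give $K$-negative walls when $n\ge 4$, and for $n\ge 5$ there are infinitely many numerical rational classes to rule out.

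The proposed Weyl-group reduction cannot do this job. Moving $D$ by $w\in\cW$ also moves the polarization, so it cannot decide whether $(2D+K)^\perp$ meets a given path. Moreover, a $\cW$-classification of wall classes is only available for $D^2\in\{1,0,-1\}$ (and $D^2=-2$ when $k\le 9$); see Proposition~\ref{partial classif} and Remark~\ref{special -2 classes}.

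Your instinct to bound $\xi=2D+K$ via $\xi^2$ and $\xi\cdot L$ is the right one, but it has to be carried out on an explicit path, as the paper does. By Lemma~\ref{good_walls}, every $K$-negative wall has $D\cdot(D+K)=-2$. Writing $D=(1-d)h+\sum m_ie_i$ with $d\ge 0$, the conditions become $d^2+d=\sum(m_i^2+m_i)$ and $(2d+1)(a+3)=\sum(2m_i+1)$. Cauchy--Schwarz then bounds $d$ by $3$ whenever $L_a$ is ample and $K$-negative (by $5$ when $k=9$, using also integrality). A finite case check (Lemma~\ref{line segment}) shows that for $\max\{n-3,0\}<a\le n+1$ the only walls are $D=h$ at $a=n+1$ and $D=h-e_i$ at $a=n-1$. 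This yields the explicit $L=L_a$ with $\max\{n-3,0\}<a<n-1$, which your proposal leaves unspecified.

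Several concrete ingredients are also wrong.

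\begin{itemize}
\item Near $h$ the moduli space is empty, not ``controlled by the blowdown'': Lemma~\ref{sstable vb} with $h'=h$ forces $(2h+K)\cdot L\ge 0$, i.e.\ $a\le n+1$ on the path $L_a$.
\item Write $e=\sum_j e_j$. The first nonempty chamber is $\PP_{-K-h}=\PP\Ext^1\big(\cO(h),\cO(2h-e)\big)=\PP H^1(h-e)\cong\PP^n$, so $A=2h-e$ and $B=h$. Your candidate $A=h-e$, $B=2h$ gives $h^1(-h-e)=n+4$.
\item No twists by $I_p$ occur. $K\cdot L<0$ forces $\chi(-D)=0$, hence $D\cdot(D+K)=-2$ and $Z_1=Z_2=\emptyset$. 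This is what makes each crossing a clean exchange $\PP_D\leftrightarrow\PP_{-K-D}$, and a point blowup when $D^2=0$.
\item The sheaves removed at the second wall are those containing $\cO(h-e_i)=\cO(B-e_i)$, not $\cO(A+e_i)$.
\item Gale duality enters through the cokernel of evaluating \emph{linear} forms, $H^0(\cO_{\PP^2}(1))\to\bigoplus_i\CC z_i\to H^1(h-e)\to 0$. The point $P_i=\pi(z_i)$ spans the kernel of $H^1(h-e)\to H^1(h-e+e_i)$ (Proposition~\ref{prop:Mukai_Gale}). Cubics play no role, and none pass through $k\ge 10$ general points.
\item $\chi(F)=K_S^2=5-n$, not $n+2$, so the claimed $h^0(F)=n+2$ with vanishing higher cohomology is false, though you never use it.
\item $S$ is del Pezzo only for $n\le 4$, not $n\le 5$.
\end{itemize}
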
 
Having realized $X$ as a moduli space of torsion-free sheaves on $S$, we can then exploit the classical theory of moduli spaces of sheaves on surfaces to investigate the birational geometry of X and prove Theorem~\ref{thm:main}. Specifically, there is a wall-and-chamber decomposition of the ample cone $\Amp(S)$ associated with different notions of stability. Unlike the case of del Pezzo surfaces, however, one cannot in general fully describe the variation in the corresponding moduli spaces under changes of polarization from this decomposition. This can nevertheless be done effectively on the subcone $\Amp(S)^{K\leq 0}$. We show that a special linear map, called the \emph{determinant map},
$$
\rho\colon \N^1(S)\to \N^1(X),
$$
maps the stability chamber decomposition of the subcone $\Amp(S)^{K\leq 0}$ to the Mori chamber decomposition of the subcone $\Eff(X)^{K\leq 0}$.
We note that, although our primary goal is to describe the birational geometry of blowups $X=\Bl_{n+4}\PP^n$ that are not Mori dream spaces, namely those  with $n \geq 5$, Theorems~\ref{thm:moduli} and \ref{thm:main} provide a unified treatment of all these blowups. When $2 \leq n \leq 4$, $S=\Bl_{n+4}\PP^2$ is a del Pezzo surface and $X=\Bl_{n+4}\PP^n$ is a Mori dream space. In this case, the determinant map yields a complete description of the Mori chamber decomposition of $\Eff(X)$ in terms of the stability chamber decomposition of $\Amp(S)$.

We expect that some manifestation of Gale duality similar to that described in Theorem~\ref{thm:moduli} holds for all values of $s,n\geq 2$.  Remark~\ref{rem:expectation} provides evidence for this expectation. What is less clear is how to use the realization of $\Bl_{n+s+2}\PP^n$ as a moduli space of torsion-free sheaves on $\Bl_{n+s+2}\PP^s$ to determine $\Eff(\Bl_{n+s+2}\PP^n)$ and a possible Mori chamber decomposition of $\Eff(\Bl_{n+s+2}\PP^n)^{K\leq 0}$.

This paper is organized as follows. 
In Section~\ref{section:Blowups}, we discuss in detail blowups of projective spaces, the action of the Weyl group, and Gale duality. 
In Section~\ref{section:moduli}, we review the classical theory of moduli spaces of Gieseker semistable torsion-free sheaves on a surface $S$, and the stability chamber decomposition of the ample cone $\Amp(S)$, focusing on the special case in which $S$ is the blowup of $\PP^2$ at a finite set of points. 
In Section~\ref{section:blowups2}, we revisit blowups of the projective plane at Cremona-general points in light of the stabilty chamber decomposition and prove Theorem~\ref{thm:moduli}. 
In Section~\ref{section:determinant}, we introduce the determinant map and describe it explicitly in our setting. 
In Section~\ref{section:proof_main_thm}, we put all these results together to prove Theorem~\ref{thm:main}.

We always work over $\CC$. 

\bigskip

\noindent {\bf Acknowledgements.} 
This work was partially supported by the ``CAPES-COFECUB programme'' (project number: Ma 1017/24), funded by the French Ministry for Europe and Foreign Affairs, the French Ministry for Higher Education and Coordena\c c\~ao de Aperfei\c coamento de Pessoal de N\'ivel Superior – Brazil (CAPES), and the ``Brazilian-French Network in Mathematics''.
Carolina Araujo was partially supported by grants from CNPq and Faperj. Ana-Maria Castravet was partially supported by ANR grant FanoHK and the Institut Universitaire de France. During the writing of this project, Inder Kaur was partially supported by DFG, TRR 326 Geometry and Arithmetic of Uniformized Structures, project number 444845124, an EPSRC grant EP/W026554/1 and a CNRS ``poste rouge" at IRMAR Rennes. We thank Cinzia Casagrande, Igor Dolgachev, Olivia Dumitrescu, Andrea Fanelli, and Elisa Postinghel for the many enlightening discussions on the topics treated in this paper. We also thank the audiences of the various talks related to this work for their insightful questions and valuable comments.

%%%%%%%%%%%%%%%%%%%%%%%%%%%%%%%%%%%%%%%%%%%%%%%%%%%%%%%%%%%%%%%%%%%%%%%%%

\section{Blowups and Gale duality}\label{section:Blowups}
In this section, we discuss blowups of projective spaces, the action of the Weyl group, and Gale duality, and fix the notation used throughout the paper. In Subsection~\ref{subsection_blowups}, we study the action of the Weyl group on the Picard group of the blowup of $\PP^n$ at Cremona-general points, and characterize the Coble pairing with respect to this action.
In Subsection~\ref{subsection:gale}, we introduce Gale duality, explain how it can be used to realize blowups of projective spaces as moduli spaces of torsion-free sheaves, and relate it to the actions of the Weyl group.
In Subsection~\ref{subsection:blowups1}, we focus on blowups of the projective plane and review what is known about their nef and pseudoeffective cones.

\subsection{Blowups and the action of the Weyl group} \label{subsection_blowups}

Let $\Bl_k\PP^n$ be the blowup of $\PP^n$ at $k$ Cremona-general points $P_1,\dots, P_{k}$, and follow  Notation~\ref{notation_Pic}:
$$\Pic(\Bl_k\PP^n) \ = \ \ZZ H \oplus \bigoplus_{i\in\{1, \dots,k\}}\ZZ E_i \ , \quad K_{\Bl_k\PP^n} \ = \ -(n+1)H + \sum_{i=1}^{k} (n-1) E_i.$$
Recall the action of the Weyl group introduced in Definition~\ref{Weyl}:
$$
\cW_{n,k} := \big\langle \ \varphi_I^* \ | \ I\subset \{1, \dots,k\} \text{ with } |I|=n+1 \ \big\rangle \subset GL\big(\N^1(\Bl_k\PP^n)\big).
$$
When $k> n+1$, the group $\cW_{n,k}$ contains the permutation group on $k$ letters, corresponding to the permutations of the $E_i$'s.
To see this, for each subset $J\subset \{1, \dots,k\}$ of cardinality $n$, and $i\in \{1, \dots,k\}\setminus J$, set 
$\varphi_{Ji}:=\varphi_{J\cup\{i\}}^*:\N^1(\Bl_k\PP^n)\to \N^1(\Bl_k\PP^n)$, and for each pair of indices $i, j\in \{1, \dots,k\}\setminus J$, $i\neq j$, let $\tau_{i,j}:\N^1(\Bl_k\PP^n)\to \N^1(\Bl_k\PP^n)$ be the isomorphism that swaps $E_i$ and $E_j$ and fixes the other basis elements.
One can easily check that $\varphi_{Ji}\cdot \varphi_{Jj}\cdot \varphi_{Ji}=\tau_{i,j}$. 
So we can take the following alternative set of generators: 
$$
\cW_{n,k} = \langle s_0, s_1, \dots, s_{k-1}\rangle \subset GL\big(\N^1(\Bl_k\PP^n)\big),
$$
where $s_0=\varphi_{\{1,\dots, n+1\}}^*$ and $s_i=\tau_{i,(i+1)}$ for each $i\in \{1,\dots, k-1\}$. 
Up to scaling, the canonical class $K_{\Bl_k\PP^n}$ is the unique nonzero vector fixed by the action of $\cW_{n,k}$.

\begin{rmk} \label{rmk:cremona_action}
While Definition~\ref{Weyl} makes sense as long as the $k$ points of $\PP^n$  are in general linear position, its action on the Picard group is only geometrically meaningful when they are Cremona-general. 
Indeed, suppose that the set of marked points $P^0=P=\big(P_1, \dots, P_{k}\big)$ in $\PP^n$ is Cremona-general. 
For any finite sequence $I_1, \dots ,I_m$ of subsets  of cardinality $n+1$ of $\{1, \dots,k\}$, consider the corresponding sequence of sets of marked points
$P^j=\big(P^j_1, \dots, P^j_{k}\big)=(I_j)_* \dots (I_1)_*P$, for $j=1,\dots,m$, and the standard Cremona transformation $\varphi_j:\PP^n\dashrightarrow\PP^n$ centered at the set of points $\{P^{j-1}_i\}_{i\in I_j}$. 
The composed Cremona transformation $\varphi=\varphi_m\circ\ldots\circ \varphi_1:\PP^n\dashrightarrow\PP^n$ induces a small modification between the blowup $X_P$ of $\PP^n$ at $P$ and the blowup $X_{P'}$ of $\PP^n$ at the new set of marked points $P'=P^m=(I_m)_* \dots (I_1)_*P$, and thus induces an isomorphism 
between $\Pic(X_P)$ and $\Pic(X_{P'})$. 
The element $\varphi_{I_1}^*\circ\ldots\circ \varphi_{I_m}^*\in \cW_{n,k}$ is then the composition of this induced isomorphism with the 
natural identification of $\Pic(X_P)$ and $\Pic(X_{P'})$ obtained by identifying the natural bases $\{H, E_1, \dots, E_k\}$ and $\{H', E'_1, \dots, E'_k\}$.
In particular, this map sends classes of effective divisors to classes of effective divisors.
It follows that the action of $\cW_{n,k}$ on $\N^1(X)$ preserves the pseudoeffective cone $\Eff(\Bl_k\PP^n)$.
Similarly, it also preserves the movable cone $\Mov{(\Bl_k\PP^n)}$.
We refer to \cite{Dolgachev} and \cite{DO} for more details on the Weyl group $\cW_{n,k}$ and its action on $\Pic(\Bl_k\PP^n)$ and $\N^1(\Bl_k\PP^n)$.
\end{rmk}

Recall the Coble pairing on $\Pic(\Bl_k\PP^n)$ introduced in Definition~\ref{Coble}:
\begin{itemize} 
	       \item  $(H,H)=n-1$;
	       \item  $(H, E_i) = 0$; and 
           \item  $(E_i, E_j) = -\delta_{ij}$.
\end{itemize} 
The justification for this pairing, which coincides with the intersection product on surfaces when $n=2$, is that it endows $\Pic(\Bl_k\PP^n)$ with a lattice structure under which $\cW_{n,k}$ is a group generated by simple reflections through $(-2)$-roots.
Indeed, set $\alpha_0=H-E_1-\dots -E_{n+1}$ and $\alpha_i=E_i-E_{i+1}$ for $1\leq i\leq k-1$.
Then, for each $i\in\{0,\dots, k-1\}$,  $\alpha_i\in K_{\Bl_k\PP^n}^\perp$, $\alpha_i^2=-2$ and $s_i$ is precisely the reflection through the simple root $\alpha_i$.
The Dynkin diagram of this root system is depicted below.
$$
\begin{tikzpicture}[
    transform shape,
    scale=0.6,
    node font=\large\bfseries\sffamily,
    every node/.append style={circle,minimum size=7mm,line width=1.6pt},
    every edge/.append style={ultra thick},
    d/.style={draw},
    ]
    \node[d,label={$\alpha_1$},scale=0.6] (1) {};
    \node[d,right=of 1, label={$\alpha_2$},scale=0.7] (2) {} edge (1);
    \node[d,right=of 2,scale=0.7] (3) {} edge (2);
    \node[d,right=of 3,scale=0.7] (n) {};
    \path (3) -- node[auto=false]{\ldots} (n);
    \node[d,right=of n,label={[label distance=-2mm]$\alpha_{n+1}$},scale=0.7] (n+1) {} edge (n);
    \node[d,right=of n+1,scale=0.7] (n+2) {} edge (n+1);
    \node[d,right=of n+2,scale=0.7] (k-2) {};
    \path (n+2) -- node[auto=false]{\ldots} (k-2);
ar@adblo\node[d,right=of k-2, label={[label distance=-2mm]$\alpha_{k-1}$},scale=0.7] (k-1) {} edge (k-2);
    \node[d,below=of n+1, label=below:{$\alpha_0$},scale=0.7] (0) {} edge (n+1);
\end{tikzpicture}
$$

While the Coble pairing is not the unique one with this property, it is the only one that satisfies the additional property that $(E_i, E_j) = 0$ for $i\neq j$. This is a consequence of the following lemma. 
Although this result is not needed in the sequel, we include it for the sake of completeness.

\begin{lemma}
Suppose that $n\geq 2$ and $k> n+1.$
Let $(\cdot,\cdot):\Pic(\Bl_k\PP^n)\times\Pic(\Bl_k\PP^n)\ra\ZZ$ be a symmetric bilinear form with
\begin{itemize} 
	       \item  $(H,H)=a$ ;
	       \item  $(H, E_i)=b$ ; 
           \item  $(E_i, E_j) = \begin{cases}
c\quad \text{if} \quad i\neq j \ ;  \\
d\quad \text{if} \quad i= j \ ;
\end{cases}$
\end{itemize} 
for some $a,b,c,d\in\ZZ$.  
Suppose that, for each subset $I\subset\{1,\ldots,k\}$ of cardinality $|I|=n+1$, there exists $\al_I\in\Pic(X)$ such that $(\al_I,\al_I)=-2$ and the map $\varphi^*_I\in \cW_{n,k}$ is the reflection across the hyperplane $\al_I^\perp$. Then:
$$a=(n+1)^2c+(n-1),\quad b=(n+1)c\quad \text{and}\quad d=c-1.$$
In particular, if $c=0$ then $(\cdot,\cdot)$ is the Coble pairing from Definition \ref{Coble}. 
\end{lemma}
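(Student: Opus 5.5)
The plan is to compare two formulas for $\varphi_I^*$: the explicit Cremona formula and the reflection formula. By the $S_k$-symmetry of the hypotheses it suffices to take $I=\{1,\dots,n+1\}$. Recall that the standard Cremona transformation acts by
$$\varphi_I^*(H)=nH-(n-1)\sum_{i\in I}E_i,\qquad \varphi_I^*(E_i)=H-\sum_{j\in I\setminus\{i\}}E_j \ (i\in I),\qquad \varphi_I^*(E_j)=E_j \ (j\notin I).$$
Since $(\al_I,\al_I)=-2$, the reflection across $\al_I^\perp$ is
$$v\mapsto v-\frac{2(v,\al_I)}{(\al_I,\al_I)}\al_I=v+(v,\al_I)\al_I.$$

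\textbf{Step 1: identify the root.} Set $\be:=H-E_1-\dots-E_{n+1}$. Then $\varphi_I^*(E_1)-E_1=\be\neq 0$. Comparing with the reflection formula gives $(E_1,\al_I)\al_I=\be$, so $\al_I$ is a rational multiple of $\be$. Because $\be$ is primitive in $\Pic(X)$ and $\al_I\in\Pic(X)$, we get $\al_I=\la\be$ with $\la\in\ZZ\setminus\{0\}$.

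\textbf{Step 2: translate into conditions on $\be$.} The reflection now reads $v\mapsto v+\la^2(v,\be)\be$. Comparing coefficients of $\be$ for three test vectors gives three conditions:
\begin{itemize}
\item for $v=E_1$: $\la^2(E_1,\be)=1$, which forces $\la^2=1$ and $(E_1,\be)=1$;
\item for $v=H$: $\varphi_I^*(H)-H=(n-1)\be$, so $(H,\be)=n-1$;
\item for $v=E_j$ with $j\notin I$ (such $j$ exists because $k>n+1$): $(E_j,\be)=0$.
\end{itemize}

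\textbf{Step 3: expand in terms of $a,b,c,d$.} Expanding the three pairings gives
$$(E_j,\be)=b-(n+1)c=0,\qquad (E_1,\be)=b-d-nc=1,\qquad (H,\be)=a-(n+1)b=n-1.$$
Solving in order yields $b=(n+1)c$, then $d=c-1$, then $a=(n+1)^2c+(n-1)$. As a consistency check, $(\be,\be)=-2$ then holds automatically.

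Setting $c=0$ gives $a=n-1$, $b=0$, $d=-1$, which is exactly the Coble pairing of Definition~\ref{Coble}.

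The only delicate point is Step 1. One must justify that $\al_I$ is an integral multiple of $\be$, using primitivity of $\be$, and then extract $\la^2=1$ from integrality. This step fixes the normalization, so that the coefficient comparisons in Step 2 hold exactly rather than up to a scalar.
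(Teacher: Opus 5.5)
Your proof is correct and follows the paper's argument. The root is pinned down as $\alpha_I=\pm(H-\sum_{i\in I}E_i)$ by comparing the reflection with $\varphi_I^*(E_i)$ for $i\in I$ and using integrality, and then $b=(n+1)c$ and $d=c-1$ come from $(\alpha_I,E_j)=0$ for $j\notin I$ and $(\alpha_I,E_i)=1$ for $i\in I$. The only difference is how you get $a$: you use the action on $H$, $\varphi_I^*(H)=nH-(n-1)\sum_{i\in I}E_i$, while the paper reads it off directly from the normalization $(\alpha_I,\alpha_I)=-2$; both are valid, and once $b$ and $d$ are known the two conditions give the same value of $a$.
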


\bp
For each subset $I\subset\{1,\ldots,k\}$ of cardinality $n+1$, 
let $\al_I\in\Pic(X)$ be such that $(\al_I,\al_I)=-2$ and the map $\varphi^*_I$ is the reflection $r_{\alpha_I}$ across the hyperplane $\al_I^\perp$.
Since $(\al_I,\al_I)=-2$, the reflection $r_{\alpha_I}$ is given by 
$$r_{\alpha_I}(x)=x+(\alpha_I,x)\alpha_I.$$
Therefore, for any $i\in I$, we have: 
$$
H-\sum_{j\in I\setminus\{i\}}E_j \ = \  \varphi_I^*(E_i) \ = \ r_{\alpha_I}(E_i) \ = \ E_i+(\al_I,E_i)\al_I.
$$
It follows that 
$$
\al_I=\frac{1}{(\al_I,E_i)}\big(H-\sum_{j\in I}E_j\big),\ \text{for all}\ i\in I.
$$
Since $\al_I\in Pic(X)$, we must have
$(\al_I,E_i)=\pm1 \ \ \forall i\in I$ and we may assume without loss of generality that 
$$\al_I=H-\sum_{i\in I}E_i \quad \text{and} \quad (\al_I,E_i)=1\ \ \forall  i\in I.$$
It follows that $b-d-n\cdot c=1$. 
Similarly, for any $j\in\{1, \dots, k\}\setminus I$, we have: 
$$
E_j \ = \ \varphi_I^*(E_j) \ = \ r_{\alpha_I}(E_j)  \ = \ E_j+(\al_I,E_j)\al_I.
$$
It follows that $b=(n+1)c$. Together with $b-d-n\cdot c=1$, this implies that $d=c-1$. 
From $(\al_I,\al_I)=-2$ and the above relations, we get that $a=(n+1)^2c+(n-1)$.
\ep

\subsection{Gale duality}\label{subsection:gale}
Let $n$ and $s$ be positive integers, and set $k:=n+s+2$. 
In this subsection we introduce Gale duality, a bijective correspondence between configurations of $k$ points in $\PP^s$ and  configurations of $k$ points  in $\PP^n$, considered up to projective equivalence. 
Our main references are \cite{DO} and \cite{EP}. 
For simplicity, we restrict to configurations of points in linearly general position, as this will suffice for our purposes.
We remark, however, that the correspondence can be defined in a more general setting.

\begin{defn}\label{def:gale}
Let $P=\big(P_1, \dots, P_{k}\big)$ and $Q=\big(Q_1, \dots, Q_{k}\big)$ be configurations of points in linearly general position in $\PP^n$ and $\PP^s$, respectively, considered up to projective equivalence. 
Write $A$ for the $k\times (s+1)$ matrix of rank $s+1$ whose rows correspond to the points $Q_1, \dots, Q_{k}$, 
and write $B$ for the $k\times (n+1)$ matrix of rank $n+1$ whose rows correspond to the points $P_1, \dots, P_{k}$. 
The two configurations $P$ and $Q$ are said to be \emph{Gale dual} if there is an invertible diagonal $k\times k$ matrix $D$ such that 
$B^t D A=0$.
\end{defn}

\begin{rmk}\label{rmk:gale}
If we are given $k$ points $Q_1, \dots, Q_{k}$ in linearly general position in $\PP^s=\PP(\CC^{s+1})$, here is a concrete way to construct their Gale dual points.
Write $A$ for the $k\times (s+1)$ matrix whose rows correspond to a choice of coordinates for the points $Q_1, \dots, Q_{k}$. 
Let $G\subset \CC^k$ be the subspace spanned by the columns of $A$, and consider the quotient $\CC^k\twoheadrightarrow \CC^k/G$. 
Then the Gale dual points $P_1,\ldots, P_k$ in $\PP(\CC^k/G)=\PP^n$ are given by the images of the vectors in the standard basis of $\CC^k$.    
\end{rmk}

Next, we explain a manifestation of Gale duality observed by Mukai, which serves as the starting point of our study of blowups of projective spaces as moduli spaces of vector bundles. We follow the argument in \cite[Lemma 5.16(a)]{CCF_v1}, which is done for $s=2$, but works for  any $s\geq 2$.

Let $Q_1, \dots, Q_{k}\in \PP^s$ be a collection of points in linearly general position, and consider the blowup $Y=\Bl_{Q_1,\ldots, Q_k}\PP^s$. 
In what follows, denote by $h\in \Pic(Y)$ the pullback of the hyperplane class of $\PP^s$, by $e_i\subset Y$ the exceptional divisor over $Q_i$ and its class in $\Pic(Y)$, and set $e:=\sum_{i=1}^k e_i$.
We will be interested in nontrivial extensions 
\begin{equation}\label{xi_F}
\xi_F: \ \ 0\ra \cO_Y(h-e)\ra F \ra \cO_Y\ra0 \ ,
\end{equation}
which are parametrized by the vector space $\Ext^1\big(\cO_Y, \cO_Y(h-e)\big)\cong H^1\big(Y,\cO_Y(h-e)\big)$.

\begin{prop}\label{prop:Mukai_Gale}
Let $P=\big(P_1, \dots, P_{k}\big)$ and $Q=\big(Q_1, \dots, Q_{k}\big)$ be Gale dual configurations of points in $\PP^n$ and $\PP^s$, respectively.
Set $Y=\Bl_{Q_1,\ldots, Q_k}\PP^s$, and let $h$ and $e$ be as above. Then the following statements hold. 
\begin{enumerate}
    \item $H^1\big(Y,\cO_Y(h-e)\big)\cong \CC^{n+1}$.
    \item Under the identification $\PP^n\cong \PP\big(H^1(\cO_Y(h-e))\big)$, the point $P_i\in \PP^n$ corresponds to the unique (up to scaling) non-trivial extension $\xi_{F_i}$ as in (\ref{xi_F}) for which $F_i$ contains a subsheaf isomorphic to $\cO_Y(-e_i)$ (with locally free cokernel), $i\in\{1,\dots, k\}$. 
    
    \item Under the identification $\PP^n\cong \PP\big(H^1(\cO_Y(h-e))\big)$, the points in the line $\ell_{ij}\subset \PP^n$ through distinct points $P_i$ and $P_j$ correspond to the extensions $\xi_{F_{ij}}$ as in (\ref{xi_F}) for which $F_{ij}$ contains a subsheaf isomorphic to $\cO_Y(-e_i-e_j)$  (with locally free cokernel except for the extensions corresponding to $P_i$ and $P_j$).
\end{enumerate}
\end{prop}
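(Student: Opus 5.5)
The plan is to compute $H^1(Y,\cO_Y(h-e))$ through the restriction sequence to the exceptional divisors and then read off the extensions with prescribed subsheaves via pushouts. Let $\pi\colon Y\to\PP^s$ be the blowup and $V=H^0(\PP^s,\cO(1))\cong\CC^{s+1}$.

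\textbf{Step 1: the cohomology.} Since $\pi_*\cO_Y(h-e)=\cI_Q(1)$ and $R^1\pi_*\cO_Y(-e)=0$ (the point blowups are smooth, so $R^1\pi_*\cO(-e_i)=0$), the Leray spectral sequence gives $H^1(Y,\cO_Y(h-e))\cong H^1(\PP^s,\cI_Q(1))$. The sequence $0\to\cI_Q(1)\to\cO(1)\to\bigoplus_i\cO_{Q_i}\to0$ then yields
\[
0\to H^0(\cI_Q(1))\to V\xrightarrow{\ \mathrm{ev}\ }\CC^k\to H^1(\cI_Q(1))\to 0 .
\]
Because $k\ge s+1$ and the $Q_i$ are in linearly general position, $H^0(\cI_Q(1))=0$. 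Hence $H^1\cong\CC^k/\mathrm{ev}(V)$, where $\mathrm{ev}(V)$ is exactly the column space $G$ of $A$ from Remark~\ref{rmk:gale}. This has dimension $k-s-1=n+1$, which proves (1). By Remark~\ref{rmk:gale}, $\PP(H^1)=\PP(\CC^k/G)$, and $P_i$ is the image of the $i$-th standard basis vector $\epsilon_i$. The rescaling by the diagonal matrix $D$ in Definition~\ref{def:gale} only accounts for choices of coordinates.

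\textbf{Step 2: the subsheaf $\cO_Y(-e_i)$.} Suppose $\cO_Y(-e_i)\hookrightarrow F$ with locally free cokernel. Composing with $F\to\cO_Y$ gives a map $\cO_Y(-e_i)\to\cO_Y$, which is a section of $\cO(e_i)$ and hence a multiple of the canonical section $s_i$. It is nonzero, since $\Hom(\cO(-e_i),\cO(h-e))=H^0(\cO(h-e+e_i))=0$ by general position. Such a lift exists exactly when $\xi_F$ maps to zero under $s_i^*\colon\Ext^1(\cO,\cO(h-e))\to\Ext^1(\cO(-e_i),\cO(h-e))$. Equivalently, $\xi_F$ lies in the image of the connecting map $H^0(\cO_{e_i}(h-e+e_i))\to H^1(\cO(h-e))$ coming from $0\to\cO(h-e)\to\cO(h-e+e_i)\to\cO_{e_i}(h-e+e_i)\to0$.

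Here $\cO_{e_i}(h-e+e_i)=\cO_{e_i}(e_i)\otimes\cO_{e_i}(-e_i)^{\otimes 0}$, and more precisely $(h-e+e_i)|_{e_i}=0$, so the bundle is trivial on $e_i\cong\PP^{s-1}$ and $H^0=\CC$. This gives a unique extension up to scaling. To identify it, I would compare with the sequence $0\to\cO(h-e)\to\cO(h)\to\bigoplus_j\cO_{e_j}\to0$ (up to the relevant twists): the class is the image of $\epsilon_i$, which is $P_i$.

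The saturation claim requires showing that the cokernel $\cO(h-e+e_i)\otimes\cI_Z$ has $Z=\emptyset$. I would check this via $c_2$: the degeneracy locus is controlled by the extension being nonsplit on $e_i$ only.

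\textbf{Step 3: the lines $\ell_{ij}$.} Part (3) follows the same argument with $e_i+e_j$. Now $H^0(\cO_{e_i\cup e_j})=\CC^2$ maps onto the span of $\epsilon_i$ and $\epsilon_j$, which is the line $\ell_{ij}$. The cokernel is locally free except when the section vanishes on one of the two components; this happens at $P_i$ and $P_j$, where the map factors through $\cO(-e_i)$ or $\cO(-e_j)$ and the cokernel acquires torsion along $e_j$ or $e_i$.

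\textbf{Main obstacle.} I expect the hardest part to be the compatibility of identifications: checking that the connecting-map images coincide with the images of the $\epsilon_i$ under the isomorphism of Step 1, and verifying local freeness of the cokernels. I would handle both by working with the single diagram built from $\cO(h-e)\subset\cO(h-e+e_i)\subset\cO(h)$.
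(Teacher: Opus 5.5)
Your proposal is correct and takes essentially the same route as the paper. The shared steps are the identification $H^1(Y,\cO_Y(h-e))\cong H^1(\PP^s,\cI_Q(1))\cong\CC^k/G$, the comparison of $0\to\cO_Y(h-e)\to\cO_Y(h-e+e_i)\to\cO_{e_i}\to 0$ with the restriction sequence of $\cO_Y(h)$ to show that the $i$-th basis vector spans the kernel of $H^1(\cO_Y(h-e))\to H^1(\cO_Y(h-e+e_i))$, and the reading of that kernel as the condition that the canonical map $\cO_Y(-e_i)\to\cO_Y$ lifts to $F$.

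The one step to tighten is local freeness of the cokernel. Your $c_2$ count presupposes that the image of $\cO_Y(-e_i)$ is saturated, which is the real issue. It is cleaner to note that the cokernel is an extension of $\cO_{e_i}$ by $\cO_Y(h-e)$. Since $\Ext^1(\cO_{e_i},\cO_Y(h-e))\cong\CC$, the cokernel is either $\cO_Y(h-e+e_i)$ or $\cO_Y(h-e)\oplus\cO_{e_i}$. In the second case the saturation of $\cO_Y(-e_i)$ is a copy of $\cO_Y$ mapping isomorphically onto the quotient, so $\xi_F$ splits.
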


\begin{proof}
Denote by $\cI_{Q}\subset\cO_{\PP^s}$ the ideal sheaf of the set of points $\{Q_1,\ldots, Q_k\}\subset \PP^s$. There is a canonical identification 
$H^1\big(\PP^s,\cI_{Q}(1)\big)\cong H^1\big(Y,\cO_Y(h-e)\big)$. Statement (1) follows from considering the long exact sequence associated to the canonical sequence
$$
0\ra \cI_{Q}(1) \ra \cO_{\PP^s}(1) \ra \bigoplus_{i=1}^k \cO_{\PP^s}(1)_{|Q_i}\ra0,
$$
namely:
$$
0\ra H^0\big(\PP^s,\cO_{\PP^s}(1)\big)\xrightarrow{\ \iota\ } \bigoplus_{i=1}^k H^0\big(\PP^s,\cO_{\PP^s}(1)_{|Q_i}\big)\xrightarrow{\ \pi \ } 
H^1\big(\PP^s,\cI_{Q}(1)\big)\ra0.
$$

The map $\iota$ 
is given by $f\mapsto \big(f(Q_1),\ldots, f(Q_k)\big)$ for some choice of coordinates for the points $Q_1, \dots, Q_{k}$. 
It follows from Remark~\ref{rmk:gale} that the Gale dual points $P_1, \dots, P_{k}\in \PP^n$ are given by the images under $\pi$ 
of  the basis vectors $z_1, \dots , z_k\in \bigoplus_{i=1}^k H^0\big(\PP^s,\cO_{\PP^s}(1)_{|Q_i}\big)$, where 
$\CC z_i= H^0\big(\PP^s,\cO_{\PP^s}(1)_{|Q_i}\big)$ for each $i\in\{1,\dots, k\}$.    

We now determine the extensions $\xi_{F_i}$ as in (\ref{xi_F}) that are associated to the corresponding vectors in 
$H^1\big(Y,\cO_Y(h-e)\big)$. To this end, we identify $\CC z_i$ with  $H^0\big(Y,\cO_Y(h)_{|e_i}\big)$
for each $i\in\{1,\dots, k\}$, and regard the sequences above as 
\begin{equation}\label{can_seq}
0\ra\cO_Y(h-e)\ra\cO_Y(h)\ra\bigoplus_{i=1}^k\cO_Y(h)_{|e_i}\ra0\, , \text{ and }
\end{equation}
\begin{equation}\label{pi2}
0\ra H^0\big(Y,\cO_Y(h)\big)\xrightarrow{\ \iota\ } \bigoplus_{i=1}^k H^0\big(Y,\cO_Y(h)_{|e_i}\big)= \bigoplus_{i=1}^k\CC{z_i}\xrightarrow{\ \pi\ } H^1\big(Y,\cO_Y(h-e)\big)\ra 0 \, .
\end{equation}
In what follows, we will show that the vector $\pi(z_i)\in H^1\big(Y,\cO_Y(h-e)\big)$ corresponds to the unique (up to scaling) extension $\xi_{F_i}$ as in (\ref{xi_F}) for which $F$ contains a subsheaf isomorphic to $\cO_Y(-e_i)$, $i\in\{1,\dots, k\}$.

The short exact sequence 
$$
0\ra\cO_Y(h-e)\ra\cO_Y(h-e+e_i)\ra\cO_Y(h-e+e_i)_{|e_i}\cong \cO_Y(h)_{|e_i} \ra0 \, 
$$
fits into a commutative diagram with sequence (\ref{can_seq}), inducing compatible long exact sequences in cohomology:
\begin{equation*}
\begin{CD}
0 @>>> \CC z_i  @>>>  H^1\big(Y,\cO_Y(h-e)\big) @>\delta_i>> H^1\big(Y,\cO_Y(h-e+e_i)\big) @>>> 0 \\
@.           @VVV @V{\cong}VV @VVV @. \\
\dots @>\iota>>  \bigoplus_{i=1}^k\CC{z_i}  @>\pi>> H^1\big(Y,\cO_Y(h-e)\big)    @>>>  0\, .
\end{CD}
\end{equation*}
So the vector $\pi(z_i)\in H^1\big(Y,\cO_Y(h-e)\big)$ generates the kernel of $\delta_i$.
We now analyse this kernel in terms of the extensions parametrized by these cohomology groups, namely:
$$
\Ext^1\big(\cO_Y, \cO_Y(h-e)\big)\cong H^1\big(Y,\cO_Y(h-e)\big) \xrightarrow{\ \delta_i\ } H^1\big(Y,\cO_Y(h-e+e_i)\big)\cong \Ext^1\big(\cO_Y(-e_i), \cO_Y(h-e)\big).
$$
Given a nontrivial extension $\xi_{F}\in \Ext^1\big(\cO_Y, \cO_Y(h-e)\big)$ as in (\ref{xi_F}), 
we apply the functors $\Hom(\cO_Y,-)$ and  $\Hom(\cO_Y(-e_i),-)$ to it, and obtain a commutative diagram with horizontal exact sequences:
\begin{equation*}
\begin{CD}
0 @>>> \Hom(\cO_Y, F)=0  @>>>  \Hom(\cO_Y, \cO_Y)=\CC\cdot Id @>\al>> \Ext^1\big(\cO_Y, \cO_Y(h-e)\big) \\
@.           @VVV @V{\gamma_i}VV @V{\delta_i}VV \\
0 @>>>  \Hom\big(\cO_Y(-e_i), F\big)  @>>> \Hom\big(\cO_Y(-e_i), \cO_Y\big)=\CC\cdot\tau_i    @>\beta_i>>  \Ext^1\big(\cO_Y(-e_i), \cO_Y(h-e)\big)\, ,
\end{CD}
\end{equation*}
where $\tau_i\in \Hom(\cO_Y(-e_i), \cO_Y)$ corresponds to the canonical inclusion $\cO_Y(-e_i)\hookrightarrow \cO_Y$.
We have that $\gamma_i(Id)=\tau_i$ and $\al(Id)=\xi_F$.
It follows that 
$$
\xi_F\in \ker(\delta_i) \ \iff \ \tau_i\in \ker(\beta_i)\, .
$$
Since there are no nonzero maps $\cO_Y(-e_i)\ra\cO_Y(h-e)$, we have that $\tau_i\in \ker(\beta_i)$ 
if and only if the vector bundle $F$ has a subsheaf isomorphic to $\cO_Y(-e_i)$. 
In this case, a straightforward diagram chase shows that the cokernel is locally free whenever the extension $\xi_F$ is nontrivial. 
This proves statement (2).

Statement (3) is proved in the same way as statement (2).
\end{proof}
    
\begin{rmk}\label{rem:expectation}
Proposition~\ref{prop:Mukai_Gale} suggests that $\PP^n=\PP\big(H^1(\cO_Y(h-e))\big)$ may be viewed as the moduli space of 
vector bundles $F$ on $Y=\Bl_{Q_1,\ldots, Q_k}\PP^s$  satisfying 
$$
\rk(F)=2,\quad \text{c}_1(F)=h-e,\quad \text{c}_2(F)=0,
$$
and which are Gieseker semistable for an appropriate choice of polarization on $Y$.
By suitably varying the polarization, one may hope that the extensions
$$
 0\ra \cO_Y(h-e)\ra F_i \ra \cO_Y\ra0 
$$
corresponding to the points $P_i\in \PP^n$ for $i\in\{1,\dots, k\}$ become unstable. These are precisely  
the extensions containing a subsheaf isomorphic to $\cO_Y(-e_i)$ for some $i\in\{1,\dots, k\}$.
The moduli space for this new polarization could then be $X=\Bl_{P_1,\ldots, P_k}\PP^n$.
In the next section, we show that this expectation is indeed correct when $s=2$, and we expect it to hold for any $s\geq 2$.
\end{rmk}

We conclude this subsection by examining Gale duality in connection with the action of the Weyl group discussed in Subsection~\ref{subsection_blowups}.
Let $P=\big(P_1, \dots, P_{k}\big)$ and $Q=\big(Q_1, \dots, Q_{k}\big)$ be Gale dual configurations of points in $\PP^n$ and $\PP^s$, respectively, let $I\subset \{1, \dots,k\}$ be a subset of cardinality $n+1$, and denote by  $I^c=\{1, \dots,k\}\setminus I$ its complement, which has cardinality $s+1$.
One can check that the configurations $(I)_*P$ and $(I^c)_*Q$, as defined in Definition~\ref{gen_position}, are again Gale dual configurations of points in $\PP^n$ and $\PP^s$, respectively. 
In particular, Gale duality preserves the property of being Cremona-general. Henceforth, we assume that the points 
$P_1,\dots, P_{k}\in \PP^n$ and $Q_1,\ldots, Q_k\in \PP^s$ are Cremona-general.

\begin{notn}\label{notation:isometry}
We follow the same notation as above: let $n$ and $s$ be integers $\geq 2$, and set $k=n+s+2$. 
We consider the blowups $X=\Bl_k\PP^n$ and $Y=\Bl_k\PP^s$, the usual bases for their Picard groups, 
$\Pic(X)=\ZZ\{H,E_1,\ldots,E_k\}$  and $\Pic(Y)=\ZZ\{h,e_1,\ldots,e_k\}$, and the Coble pairings on $\Pic(X)$ and $\Pic(Y)$ introduced in Definition \ref{Coble}.
Set 
$$
\al_0=H-E_1-\ldots-E_{n+1},\quad  \be_0=h-e_{n+2}-\ldots-e_{k}, \quad \text{ and }
$$
$$
\al_i=E_i-E_{i+1},\quad \be_i=e_{k+1-i}-e_{k-i}, \quad \text{ for  } \ i\in\{1,\ldots,k-1\}.
$$
Note that $\be_0$ differs from the definition given in \cite{Mukai}.
Recall from Subsection~\ref{subsection_blowups} that
the Weyl group $\cW_X=\cW_{n,k}$ is generated by the reflections $r_{\alpha_i}$ for $i=0,\ldots,k-1$, while  
$\cW_Y=\cW_{s,k}$ is generated by the reflections $r_{\beta_i}$ for $i=0,\ldots,k-1$.

There is an obvious isomorphism between the Dynkin diagrams associated to 
the Weyl groups $\cW_X$ and $\cW_Y$. When $s=2$, this is illustrated below by  
the Dynkin diagrams associated to $\cW_{n,n+4}$ and $\cW_{2,n+4}$.
\begin{figure}[h]\notag
    \centering
    \begin{tikzpicture}[
	transform shape,
	%%% Tamanho da figura em porcentagem (1=100%):
	scale=0.6,
	%%%
	node font=\large\bfseries\sffamily,
	%%% Configura√ß√£o dos v√©rtices:
	every node/.append style={circle,minimum size=7mm,line width=1.6pt},
	%%% Configura√ß√£o das arestas:
	every edge/.append style={ultra thick},
	%%% "ultra thick" √© equivalente a "line width=1.6pt". Pode ser substitu√≠do tamb√©m por "ultra thin", "very thin", "thin", "semithick", "thick" e "very thick", para os valores padr√µes de 0.1pt, 0.2pt, 0.4pt, 0.6pt, 0.8pt e 1.2pt, respectivamente. Se quiser customizar mais livremente, use o "line width=xpt", substituindo o "x" por um n√∫mero.
	d/.style={draw},
	]
	\node[d,label={$\alpha_1$},scale=0.7] (1) {};
	\node[d,right=of 1, label={$\alpha_2$},scale=0.7] (2) {} edge (1);
	\node[d,right=of 2,scale=0.7] (n) {};
	%%% Retic√™ncias
	\path (2) -- node[auto=false]{\ldots} (n);
	%%%
	\node[d,right=of n,label={[label distance=-2mm]$\alpha_{n+1}$},scale=0.7] (n+1) {} edge (n);
	\node[d,right=of n+1,scale=0.7] (n+2) {} edge (n+1);
	\node[d,right=of n+2, label={[label distance=-2mm]$\alpha_{n+3}$},scale=0.7] (n+3) {} edge (n+2);
	\node[d,below=of n+1, label=below:{$\alpha_0$},scale=0.7] (0) {} edge (n+1);
	%%%
	%%% Beta
	\node[d,right=of n+3,xshift=10mm, label={$\beta_1$},scale=0.7] (b1) {};
	\node[d,right=of b1, label={$\beta_2$},scale=0.7] (b2) {} edge (b1);
	\node[d,right=of b2, label={$\beta_3$},scale=0.7] (b3) {} edge (b2);
	\node[d,right=of b3,scale=0.7] (b4) {} edge (b3);
	\node[d,right=of b4,scale=0.7] (bn+2) {};
	%%% Retic√™ncias
	\path (b4) -- node[auto=false]{\ldots} (bn+2);
	%%%
	\node[d,right=of bn+2,label={[label distance=-2mm]$\beta_{n+3}$},scale=0.7] (bn+3) {} edge (bn+2);
	\node[d,below=of b3, label=below:{$\beta_0$},scale=0.7] (b0) {} edge (b3);
\end{tikzpicture}
\end{figure}

This induces a group isomorphism $\cW_Y\ra\cW_X$ given by $r_{\be_0}\mapsto r_{\al_0}$ and $r_{\be_i}\mapsto r_{\al_{k-i}}$ for $i=1,\ldots, k-1$. We denote by $\phi_0$ the isometry between the two lattices $K_Y^\perp\subset\Pic(Y)$ and $K_X^\perp\subset\Pic(X)$ given by  
$$
\phi_0:\quad \beta_0\mapsto \alpha_0 \quad \text{ and } \quad \beta_i\mapsto \alpha_{k-i}\quad \text{ for } \ i\in\{1,\ldots,k-1\}.
$$
This isometry is compatible with the identification of the two Weyl groups $\cW_Y$ and $\cW_X$ described above.
\end{notn}

The groups $\Pic(Y)$ and $\Pic(X)$ are clearly isomorphic for rank reasons. 
The following proposition characterizes the isomorphisms $\Pic(Y)\ra\Pic(X)$ that are equivariant with respect to the action of the  Weyl group $\cW=\cW_Y=\cW_X$, identified as in Notation~\ref{notation:isometry}. 
The notion of $\cW$-equivariance means that 
$$
\rho(w\cdot L)=w\cdot \rho(L) \quad \text{for all} \ w\in\cW, \  L\in\Pic(Y).
$$
Although this result is not strictly necessary for what follows, it helps to interpret the determinant map that will be constructed in Section~\ref{section:determinant}.

\begin{proposition}\label{W and rho}
Let the notation be as in Notation~\ref{notation:isometry}.
Let $\rho:\Pic(Y)\otimes\QQ\ra\Pic(X)\otimes\QQ$ be a linear map.

\bi 
\item[(1)] The restriction $\rho_{|K_Y^\perp}$ is $\cW$-equivariant  if and only if $\exists \lambda\in\QQ$ such that 
$\rho_{|K_Y^\perp}=\lambda\cdot\phi_0$. Furthermore, $\rho$ induces an isometry $K_Y^\perp\ra K_X^\perp$ if and only if $\lambda=\pm 1$.

\item[(2)] The map $\rho$ is $\cW$-equivariant  if and only if $\exists a, b\in\QQ$ such that
$$\rho(e_i)=aH-b\sum_{j=1}^k E_j-\lambda E_{i}\quad  \text{ for }  i=1,\ldots,k,$$
$$\rho(h)=\big(a(s+1)+\lambda\big)H-\big(b(s+1)+\lambda\big)\sum_{j=1}^k E_j,$$
where $\lambda=a(n-1)-b(n+1)$. In this case, we have 
$$
\rho(K_Y)=\big(2a-b(s+1)\big)K_X,
$$ 
and $\rho_{|K_Y^\perp}=\lambda\cdot \phi_0$ as in (1).
For any $\alpha\in\QQ$, $\alpha\cdot\rho_{|K_Y^\perp}\colon K_Y^\perp\ra K_X^\perp$ is an isometry if and only if $\alpha\cdot\lambda=\pm 1$. 
\ei
\end{proposition}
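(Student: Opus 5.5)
Part (1) rests on the irreducibility of the Weyl group action on $K_Y^\perp\otimes\QQ$, and part (2) is a direct computation built on top of it.

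\textbf{Part (1).} I would first note that $\phi_0$ is $\cW$-equivariant. This holds because it is an isometry sending simple roots to simple roots compatibly with the identification $r_{\be_0}\mapsto r_{\al_0}$, $r_{\be_i}\mapsto r_{\al_{k-i}}$. Indeed, $\phi_0 r_\be \phi_0^{-1}=r_{\phi_0(\be)}$ since
$$r_\be(x)=x+(\be,x)\be .$$
Hence any $\lambda\phi_0$ is equivariant. Conversely, suppose $\rho_{|K_Y^\perp}$ is equivariant. Then $\psi:=\phi_0^{-1}\circ\rho_{|K_Y^\perp}$ is a $\cW_Y$-equivariant endomorphism of $V:=K_Y^\perp\otimes\QQ$ (the image lies in $K_X^\perp$ since $K_X^\perp\otimes\QQ$ is the span of the roots $\al_i$, and equivariance forces $\rho(\be_i)\in$ the $(-1)$-eigenspace of $r_{\al_{\cdot}}$, which is $\QQ\al_{\cdot}$). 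More concretely, equivariance gives
$$r_{\be_i}\psi(\be_i)=\psi(r_{\be_i}\be_i)=-\psi(\be_i).$$
The $(-1)$-eigenspace of the reflection $r_{\be_i}$ is the line $\QQ\be_i$, at least when the form is nondegenerate on $\be_i$, which holds because $\be_i^2=-2$. So $\psi(\be_i)=\lambda_i\be_i$. Connectedness of the Dynkin diagram then gives $\lambda_i=\lambda_j$ for adjacent nodes $i,j$: apply $\psi$ to
$$r_{\be_i}(\be_j)=\be_j+\be_i$$
(using $(\be_i,\be_j)=1$) and compare coefficients, obtaining $\lambda_j\be_j+\lambda_i\be_i=\lambda_j(\be_j+\be_i)$. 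Since the $\be_i$ form a basis of $V$, we get $\psi=\lambda\,\mathrm{id}$. For the isometry claim, $(\lambda\phi_0 x,\lambda\phi_0 y)=\lambda^2(x,y)$, so we need $\lambda^2=1$, using that the form is nonzero.

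\textbf{Part (2).} I would use the decomposition $\Pic(Y)\otimes\QQ=V\oplus\QQ K_Y$, valid since $K_Y^2\neq0$ generically. Let me check this: $K_Y^2=(s+1)^2(s-1)-k(s-1)^2$. This might vanish in special cases, so instead I would work directly in the basis $h,e_1,\dots,e_k$. The permutation part of $\cW$ (the $r_{\be_i}$, $i\geq1$, permuting the $e_j$, matched with permutations of the $E_j$ via $i\mapsto k+1-i$) forces
$$\rho(e_i)=aH-b\textstyle\sum_j E_j-cE_i.$$
This is because the stabilizer of $e_i$ in $S_k$ fixes $\rho(e_i)$, so $\rho(e_i)$ is of that shape, and equivariance under the transposition gives uniform coefficients. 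Similarly $\rho(h)$ is $S_k$-invariant, so
$$\rho(h)=pH-q\textstyle\sum E_j.$$
Next, equivariance under $r_{\be_0}\leftrightarrow r_{\al_0}$ is equivalent to $\rho(\be_0)\in\QQ\al_0$ together with $\rho$ preserving the fixed hyperplanes. I would impose $r_{\al_0}\rho(x)=\rho(r_{\be_0}x)$ on $x=e_k$ and on $x=h$. Using $r_{\al_0}(E_j)=E_j+(\al_0,E_j)\al_0$ and the Coble pairing, the $e_k$ equation determines $c=\lambda=a(n-1)-b(n+1)$. Combined with $\be_0\mapsto\lambda\al_0$, this yields $p$ and $q$ as stated. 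The formula for $\rho(K_Y)$ then follows by substituting $K_Y=-(s+1)h+(s-1)e$, and the last claim follows from (1). Conversely, I would check that these formulas satisfy all generator relations.

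\textbf{Main obstacle.} I expect the main obstacle to be the bookkeeping in the $r_{\be_0}$ computation, specifically tracking the index reversal $i\mapsto k+1-i$. That reversal makes $\be_0=h-e_{n+2}-\dots-e_k$ correspond to $E_1,\dots,E_{n+1}$.
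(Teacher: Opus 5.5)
Your proposal is correct and follows essentially the paper's proof. For part (1) you use the $(-1)$-eigenline $\QQ\beta_i$ of each reflection together with $r_{\beta_i}(\beta_j)=\beta_j+\beta_i$ along the connected Dynkin diagram. For part (2) you use the $S_k$-symmetry to fix the shape of $\rho(e_i)$ and $\rho(h)$, and then impose the $r_{\beta_0}\leftrightarrow r_{\alpha_0}$ condition. You test that condition on $e_k$, which $r_{\beta_0}$ moves, whereas the paper uses $r_{\beta_0}(e_1)=e_1$; both give $c=a(n-1)-b(n+1)$.

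Your choice to check the converse directly on generators, rather than via $\Pic(Y)\otimes\QQ=K_Y^\perp\oplus\QQ K_Y$, is well founded. Indeed $K_Y^2=(s-1)(n+s+3-ns)$ vanishes, for example, when $(s,n)=(2,5)$.

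One correction to your closing remark: the two copies of $S_k$ are matched by the identity on indices, since $r_{\beta_i}$ and $r_{\alpha_{k-i}}$ both swap positions $k-i$ and $k+1-i$. Also, $\beta_0$ and $\alpha_0$ involve the complementary index sets $\{n+2,\dots,k\}$ and $\{1,\dots,n+1\}$, not reversed ones. The formulas you actually write down are consistent with this.
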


\bp
It is immediate to check that $\lambda\cdot\phi_0$ is $\cW$-equivariant for any $\la\in\QQ$.
Conversely, assume that $\rho_{|K_Y^\perp}$ is $\cW$-equivariant, i.e., $\rho(w\cdot L)=w\cdot \rho(L)$, for any $w\in\cW$,  $L\in K_Y^\perp$. 
Recall that $K_Y^\perp=\langle \be_0, \dots, \be_{k-1}\rangle$.
$\cW$-invariance implies that $\rho(r_{\be_j}(\be_i))=r_{\al_{k-j}}(\rho(\be_i))$ if $j\neq0$ and 
$\rho(r_{\be_0}(\be_i))=r_{\al_0}(\rho(\be_i))$. Using the definition of the reflections $r_{\be_i}$, this implies
that  $\rho(\be_i)=\la\cdot\al_{k-i}$ for some $\la\in\QQ$
and all $i\in\{1,\ldots, k-1\}$.
From $r_{\be_0}(\be_{s+1})=\be_{s+1}+\be_0$, we obtain that 
$\rho(\be_0)=\la\al_0$ and hence $\rho_{|K_Y^\perp}=\lambda\cdot\phi_0$.
The fact that $\rho$ induces an isometry $K_Y^\perp\ra K_X^\perp$ if and only if $\lambda=\pm 1$ is immediate.
This proves (1). 

Now we prove (2). A direct computation shows that the linear map $\rho:\Pic(Y)\otimes\QQ\ra\Pic(X)\otimes\QQ$ defined by the formulae above satisfies $\rho(K_Y)=\big(2a-b(s+1)\big)K_X$ and $\rho_{|K_Y^\perp}=\lambda\cdot \phi_0$.
In particular, $\rho$ is $\cW$-equivariant. 
Conversely, assume now that $\rho:\Pic(Y)\otimes\QQ\ra\Pic(X)\otimes\QQ$ is $\cW$-equivariant. 
This is equivalent to the restriction $\rho_{|K_Y^\perp}$ being $\cW$-equivariant, and moreover 
$\rho(w\cdot e_i)=w\cdot \rho(e_i) \quad \forall w\in\cW$ for some $i\in\{1,\ldots, k-1\}$.
Note that $r_{\be_j}(e_1)=e_1$ for all $j\neq k-1$, and recall that 
the group $\cW$ contains the permutation group on $k$ letters, corresponding to the permutations of the $e_i$'s.
It follows that there exist $a, b, c\in\QQ$ such that
$$\rho(e_i)=aH-b\sum_{j=1}^k E_j-cE_i,$$
and from $r_{\be_0}(e_1)=e_1$ we deduce that $c=a(n-1)-b(n+1)$.
By statement (1), $\rho(\be_i)=\la\cdot\al_{k-i}$ for some $\la\in\QQ$, and so we must have $\la=c$. To compute $\rho(h)$, use $\cW$-equivariance and the condition $\rho(\be_0)=\la\al_0$. This proves (2). 
\ep

\subsection{Blowups of the projective plane}\label{subsection:blowups1}
As suggested in Remark~\ref{rem:expectation}, we will investigate the blowups $\Bl_{n+4}\PP^n$ by viewing them as moduli spaces of semistable torsion-free sheaves on $\Bl_{n+4}\PP^2$. As we shall review in Section~\ref{section:moduli}, the notion of semistability depends on the choice of a polarization on the surface, and as the polarization varies, so does the corresponding moduli space. To control this variation, we need to understand the ample cone of $\Bl_{n+4}\PP^2$, as well as its closure, the nef cone, and its dual cone $\NE(\Bl_{n+4}\PP^2)$, the cone of pseudoeffective curves. In this section, we review what is known about these cones. While their precise shape is not fully known in general, the portions of these cones contained in the half-space $(K < 0)$ are well understood.

\begin{notn}\label{notation_Pic(S)}
Throughout this subsection, we denote by $S=\Bl_k\PP^2$ the blowup of $\PP^2$ at $k\geq 0$ distinct points, $Q_1, \dots, Q_{k}$, and by $\pi:S\to \PP^2$ the blowup morphism. When $k\geq 3$, we assume that the points are Cremona-general.
We denote by $e_i\subset S$ the exceptional curve over $Q_i$, as well as its class in $\Pic(S)$, and by $h\in \Pic(S)$ the pullback of the hyperplane class of $\PP^2$.
We denote simply by $K$ the canonical class $K_S$ of $S$, and by $\cW=\cW_{2,k}$ the Weyl group from Definition~\ref{Weyl}.
\end{notn}

We denote by $\mathcal R\subset \NE(S)$ the set of classes of $-1$-curves on $S$.
By \cite[Corollary 1]{Dolgachev}, when $k\geq 4$ the set $\mathcal R$ coincides with the orbit of a single $-1$-curve under the action of the Weyl group $\cW$
(see also Proposition~\ref{partial classif}):
$$
\mathcal R \ = \ \big\{ \ \text{classes of $-1$-curves} \ \big\} \ = \ \cW\cdot e_1.
$$
When $k\geq2$, by the Cone Theorem (see for instance \cite[Theorem~1.28]{KM}), we have:
$$
\NE(S)=\NE(S)^{K\geq0}+\sum_{e\in \mathcal R} \RR_{\geq0} e.
$$
While this description of $\NE(S)$ holds for any surface, we can say more in the special case $S=\Bl_k\PP^2$.
Part of this statement appears in \cite[Lemma 4.1]{deF} under a stronger generality assumption. 

\begin{lemma}\label{E positive surface case}
Let $S=\Bl_k\PP^2$, with $k\geq2$. 
\begin{enumerate}
    \item If $D\in \Pic(S)$ is such that $D\cdot e\geq0$ for every $e\in \mathcal R$, then
    $
    D\in \ZZ_{\geq0}(-K)+\sum_{e\in \mathcal R} \ZZ_{\geq0} e.
    $
    \item If $D\in \Pic(S)$ is effective, then
    $
    D\in \ZZ_{\geq0}(-K)+\sum_{e\in \mathcal R} \ZZ_{\geq0} e.
    $
    \item $\NE(S)\subseteq\overline{\RR_{\geq0}(-K)+\sum_{e\in \mathcal R} \RR_{\geq0} e}$.
    \item When $k=9$, $\NE(S)=\RR_{\geq0}(-K)+\sum_{e\in \mathcal R} \RR_{\geq0} e$.
\end{enumerate}
\end{lemma}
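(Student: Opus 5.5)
The natural strategy is induction via the Weyl group, reducing the degree $D\cdot h$ by Cremona transformations, with (1) as the core statement and (2)–(4) deduced from it.

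\textbf{Part (1).} Let $D=dh-\sum m_ie_i$ with $D\cdot e\geq 0$ for all $e\in\mathcal R$. Since $e_i\in\mathcal R$, we get $m_i\geq0$. Because $\mathcal R$ is $\cW$-stable and the Coble pairing (the intersection form) is $\cW$-invariant, the hypothesis is $\cW$-invariant. The target set $\ZZ_{\geq0}(-K)+\sum_{e\in\mathcal R}\ZZ_{\geq0}e$ is also $\cW$-invariant, since $K$ is fixed. We may therefore replace $D$ by any element of its $\cW$-orbit.

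Order the multiplicities so that $m_1\geq m_2\geq\dots$, which is possible because permutations lie in $\cW$. If $d<m_1+m_2+m_3$, apply $s_0$. This replaces $d$ by $2d-m_1-m_2-m_3<d$, and the degree stays nonnegative because $D\cdot(h-e_1)\ge0$, as $h-e_1-e_2$ and the $e_i$ are in $\mathcal R$. So after finitely many steps we may assume $D$ is \emph{Cremona-reduced}: $d\geq m_1+m_2+m_3$ and $m_1\geq\dots\geq m_k\geq0$.

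For such $D$, write $D$ explicitly in the claimed form by peeling off copies of $-K=3h-\sum e_i$ and of curves in $\mathcal R$. First subtract $m_k(-K)$, which leaves multiplicities $m_i-m_k\geq 0$ and the last one zero, with degree $d-3m_k$. Then, for a reduced class supported on fewer points, use the $-1$-curves $h-e_i-e_j$, the $e_i$, and the conic classes $2h-e_{i_1}-\dots-e_{i_5}$ (when $k\geq5$), together with the leftover $h$. Here $h=(h-e_1-e_2)+e_1+e_2$, so $h$ itself lies in the target set.

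Concretely, one does a secondary induction on $m_1$: subtract $h-e_1-e_2$, which preserves reducedness once multiplicities are reordered, until all $m_i=0$. The remaining $dh$ is then a nonnegative combination of copies of $h$. The checks are that $d-1\geq (m_1-1)+(m_2-1)+m_3$ holds, and that when $m_2=0$ one subtracts $e_1$-type terms instead. This combinatorial step, keeping the class in the target monoid while preserving reducedness, is the main bookkeeping obstacle. I would handle it by the simple greedy choice $h-e_1-e_2$ when $m_2>0$, and $h-e_1$ written as $(h-e_1-e_2)+e_2$ when $m_2=0$.

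\textbf{Part (2).} Write the effective divisor as $D=\sum a_jC_j$ with $C_j$ irreducible. If $C_j\in\mathcal R$ we are done for that component. Otherwise $C_j\cdot e\geq0$ for every $e\in\mathcal R$, since distinct irreducible curves meet nonnegatively, and part (1) applies to $C_j$. Summing over components gives the claim for $D$.

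\textbf{Part (3).} Every effective class lies in the cone $\RR_{\geq0}(-K)+\sum\RR_{\geq0}e$ by (2). Taking closures yields (3), since $\NE(S)$ is the closure of the cone of effective curve classes.

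\textbf{Part (4).} For $k=9$, $-K$ is effective: there is a cubic through the nine points. Each $e\in\mathcal R$ is effective. Hence the reverse inclusion $\RR_{\geq0}(-K)+\sum\RR_{\geq0}e\subseteq\NE(S)$ holds. For equality without closure, use that $K^2=0$ and $-K$ is nef. Any class in $\NE(S)^{K\ge0}$ is then proportional to $-K$ by the Hodge index theorem, since it is orthogonal to the nef isotropic class $-K$. The Cone Theorem decomposition $\NE(S)=\NE(S)^{K\geq0}+\sum\RR_{\geq0}e$ then gives exactly $\RR_{\geq0}(-K)+\sum\RR_{\geq0}e$.
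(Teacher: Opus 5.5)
\textbf{Part (1).} Your route differs from the paper's. You Cremona-reduce $D$ to the fundamental chamber and then decompose it explicitly. The paper instead inducts on $k$: if $D\cdot e=0$ for some $e\in\mathcal R$, it moves $e$ to $e_k$ and pulls back. It also inducts on $\min_{e\in\mathcal R}D\cdot e$: otherwise it passes to $D+K$, which still satisfies the hypothesis since $K\cdot e=-1$. The base case is the del Pezzo one.

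Your reduction step is sound. The degree stays nonnegative because $s_0D$ again satisfies the hypothesis and $h=(h-e_1-e_2)+e_1+e_2$, which is slightly different from the reason you give. Your argument for (2) via irreducible components is correct and simpler than the paper's.

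The gap is in the decomposition of a reduced class. Subtracting $h-e_1-e_2$ does not preserve reducedness, and it can leave the monoid altogether. Take $k=10$ and $D=3h-e_1-\dots-e_9=-K+e_{10}$. It satisfies the hypothesis, is reduced, and has $m_{10}=0$. Your recipe subtracts $0\cdot(-K)$ and then $h-e_1-e_2$, leaving $D_1=2h-e_3-\dots-e_9$.

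$D_1$ is not in the monoid. Any element $a(-K)+\sum_e b_e e$ of $h$-degree $2$ has $a=0$, since $h\cdot(-K)=3$ and $h\cdot e\geq 0$. It therefore has nonnegative $(-K)$-degree, whereas $(-K)\cdot D_1=-1$. Subtracting the conic $2h-e_1-\dots-e_5$ instead leaves $h-e_6-\dots-e_9$, which has the same defect. The missing idea is that $-K$ must be combined with exceptional curves over points of multiplicity $0$.

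A clean fix: the reduced cone is cut out by nonnegative pairing with the $\ZZ$-basis $h-e_1-e_2-e_3,\ e_1-e_2,\dots,e_{k-1}-e_k,\ e_k$ of the unimodular lattice $\Pic(S)$. Its integral points are therefore exactly the $\ZZ_{\geq0}$-combinations of the dual basis $h,\ h-e_1,\ 2h-e_1-e_2$ and $3h-e_1-\dots-e_j=-K+\sum_{i>j}e_i$ for $3\leq j\leq k$. Each of these visibly lies in $\ZZ_{\geq0}(-K)+\sum_{e\in\mathcal R}\ZZ_{\geq0}e$.

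\textbf{Part (4).} Hodge index applied to $\gamma\in K^\perp$, with $-K$ nef and isotropic, only gives $\gamma^2\leq 0$, with equality if and only if $\gamma\in\RR K$. You never show $\gamma^2\geq 0$, and it does not follow from $-K$ being nef with $K^2=0$. For nine points on a smooth cubic with $Q_1,Q_2,Q_3$ collinear, the $(-2)$-curve $h-e_1-e_2-e_3$ lies in $\NE(S)\cap K^\perp$.

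You have to use (3), which is where Cremona-generality enters. Suppose $\gamma_j=a_j(-K)+\sum_e b_{j,e}e\to\gamma$ with nonnegative coefficients. Since $K^2=0$, $(-K)\cdot e=1$ and $e\cdot e'\geq 0$ for $e\neq e'$, you get $\gamma_j^2\geq-\sum_e b_{j,e}^2\geq-(\gamma_j\cdot K)^2$. Hence $\gamma^2\geq 0$ whenever $\gamma\cdot K=0$, and only then does Hodge index give $\gamma\in\RR_{\geq0}(-K)$. This is the step the paper summarizes as $\NE(S)\cap K^\perp=\RR_{\geq0}(-K)$ following from (2).
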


\bp
If $k\leq8$, then $S$ is a del Pezzo surface,  
$$
\Nef(S) \subset \NE(S) = \sum_{e\in \mathcal R} \RR_{\geq0} e. % \quad \text{and} \quad 
%\Pic(S)\cap \NE(S) = \sum_{e\in \mathcal R} \ZZ_{\geq0} e.
$$
and $\Pic(S)\cap \NE(S) = \sum_{e\in \mathcal R} \ZZ_{\geq0} e$ if $k\leq7$, and 
$\Pic(S)\cap \NE(S) = \ZZ_{\geq0} (-K) +\sum_{e\in \mathcal R} \ZZ_{\geq0} e$ if $k=8$.

Suppose from now on that $k\geq9$. Let $D\in \Pic(S)$ be such that $D\cdot e\geq0$ for every $e\in \mathcal R=\cW\cdot e_1$.
We will prove that $D\in \ZZ_{\geq0}(-K)+\sum_{e\in \mathcal R} \ZZ_{\geq0} e$ by induction on $k$. 
Suppose first that $D\cdot e=0$ for some $e\in\mathcal R$. Up to the action of $\cW$, we may assume that $e=e_{k}$. Then $D$ is the pullback of a divisor on $\Bl_{k-1}\PP^2$ satisfying the same assumption as $D$, and  the result follows by induction on $k$.
If $D\cdot e>0$ for every $e\in\mathcal R$, then the divisor $D'=D+K$ has the property that $D'\cdot e=D\cdot e-1\geq 0$ for every $e\in\mathcal R$, and (1) is proved by induction on $\min\big\{D\cdot e\,|\,e\in \mathcal R\big\}\geq0$.

Now suppose that $D$ is effective. If $D\cdot e\geq0$ for every $e\in \mathcal R$, then we conclude by (1). 
Suppose $m:=-D\cdot e>0$ for some $e\in\mathcal R$.
Up to the action of $\cW$, we may assume that $e=e_k$ and $D=D'+me_k$, where $D'$ is the pullback of an effective divisor on $\Bl_{k-1}\PP^2$, and (2) is proved by induction on $k$.

Statement (3) follows from (2) by passing to the closure. 

When $k=9$, $-K$ is effective, and so we have $\NE(S)=\overline{\RR_{\geq0}(-K)+\sum_{e\in \mathcal R} \RR_{\geq0} e}$. Note that $-K$ is in fact nef, and it follows from statement (2) that in this case $\NE(S)\cap K^{\perp}=\RR_{\geq0}(-K)$. This proves (4).
\ep

The next lemma describes the cone $\Nef(S)^{K\leq0}$ and its boundary points. 

\begin{lemma}\label{K negative nef}\label{ample K-negative}
Let $S=\Bl_k\PP^2$, with $k\geq2$. 
Then 
\bi
\item[(1) ] $\Nef(S)^{K\leq0}=\big(K\leq0\big)\cap\displaystyle\bigcap_{e\in \mathcal R}\big(e\geq0\big)$. 
\ei
Let $D\in \Pic(S)\cap\Nef(S)^{K\leq0}$.
\bi
\item[(2) ] If $D\cdot e>0$ for every $e\in \mathcal R$, then either $D$ is ample, or $k=9$ and $D=-mK$ for some $m\in\ZZ_{>0}$. 
\item[(3) ] If $D$ is not ample and $D^2>0$, then $D$ is the pullback of an ample divisor on a surface $S'$ via a smooth blowdown $S\ra S'$.
\item[(4) ] If $D\neq 0$ but $D^2=0$, then $D=mD_0$, where $m\in\ZZ_{>0}$ and $D_0$ satisfies the following conditions. 
\bi 
\item[(i) ] If $D\cdot (-K)>0$, then $D_0=\pi^*(h'-e')$, where $\pi\colon S\ra \Bl_1\PP^2$ is a smooth blowdown, $h'\in \Pic(\Bl_1\PP^2)$ is the pullback of a line in $\PP^2$ and $e'\in \Pic(\Bl_1\PP^2)$ is the class of the unique $-1$-curve on $\Bl_1\PP^2$. 
\item[(ii) ] If $D\cdot K=0$, then $k\geq9$ and $D_0\in\cW\cdot (3h-e_1-\ldots-e_9)$. 
\ei
\ei
\end{lemma}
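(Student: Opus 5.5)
For (1), the inclusion $\subseteq$ is immediate, since $-1$-curves are effective. For the reverse, take a real class $D$ with $D\cdot K\leq 0$ and $D\cdot e\geq 0$ for all $e\in\mathcal R$. By Lemma~\ref{E positive surface case}(3), every class in $\NE(S)$ is a limit of nonnegative combinations of $-K$ and elements of $\mathcal R$. On each such generator $D$ is nonnegative: $D\cdot(-K)\geq0$ and $D\cdot e\geq 0$. So $D$ is nonnegative on $\NE(S)$, and therefore $D$ is nef. For $k\leq 8$ one can instead use $\NE(S)=\sum\RR_{\geq0}e$ directly.

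For (2), assume $D$ is integral, nef and $K$-nonpositive, with $D\cdot e>0$ for all $e\in\mathcal R$. I would apply Kleiman's criterion through Lemma~\ref{E positive surface case}(2): an irreducible curve $C$ has class $a(-K)+\sum m_e e$ with nonnegative integer coefficients. Then $D\cdot C\geq \sum m_e\, D\cdot e$, and this is positive unless $C\equiv a(-K)$. Curves numerically proportional to $-K$ exist only when $-K$ is effective, essentially $k=9$ (for $k\leq8$, $-K$ is ample and no issue arises). In that case, $D\cdot(-K)=0$ is a possibility. To handle it, I use Hodge index: $K^2=9-k\leq0$, so $D\cdot K=0$ together with $D$ nef forces $D^2\leq0$ when $K^2=0$, with equality only if $D$ is proportional to $K$. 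Hence $D=-mK$. For $k\geq10$ the class $-K$ is not effective, since $K^2<0$ and $-K\cdot e=1$; so the only candidates are the generators already handled. Strictly, Kleiman needs strict positivity on $\NE(S)\setminus0$ and not only on curves, so instead I will invoke Nakai–Moishezon: $D^2>0$, by nefness plus positivity on some curve, and $D\cdot C>0$ for every curve $C$, via the decomposition above. This handles integral classes cleanly.

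For (3), $D$ is not ample and $D^2>0$, so by (2) $D\cdot e=0$ for some $e\in\mathcal R$. Contract all $-1$-curves orthogonal to $D$. These are pairwise disjoint: if $e\cdot e'>0$ with $D\cdot e=D\cdot e'=0$, then $e+e'$ has nonnegative self-intersection and is orthogonal to $D$, contradicting Hodge index since $D^2>0$ (the case $(e+e')^2=0$ needs a small argument, using $D\cdot(e+e')=0$ and $e+e'\not\equiv0$). After a $\cW$-move these curves are $e_{j},\dots,e_k$, and $D$ descends to $S'=\Bl_{j-1}\PP^2$ with the same hypotheses and strict positivity. Then (2) gives ampleness on $S'$, the exception $-mK_{S'}$ being ruled out by $D^2>0$. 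The induction needs $k'\geq2$; for small $k'$ one checks $\PP^2$ and $\Bl_1\PP^2$ directly.

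For (4), when $D^2=0$ and $D\cdot(-K)>0$, I perform the same contraction of orthogonal $-1$-curves. This yields a nef class on $S'$ with $D^2=0$ that is positive on all $-1$-curves of $S'$. By (2) such a class would be ample or a multiple of $-K_{S'}$, both excluded, unless $S'$ has too few points for (2) to apply. So $S'=\Bl_1\PP^2$ (or $\PP^2$, which is impossible since there $D^2=0$ forces $D=0$), where the nef isotropic ray is $h'-e'$. When $D\cdot K=0$, $K^2=9-k$ and Hodge index on $K^\perp$ forces $k\geq9$. Then I run the Weyl-group reduction: use $\cW$ to minimise the degree $D\cdot h$ while keeping $D\cdot e_i$ ordered. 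A Noether-type inequality ($d\geq m_1+m_2+m_3$ at a minimum) together with $D^2=0$ and $D\cdot K=0$ forces $D$ to be $m(3h-e_1-\dots-e_9)$. \textbf{This reduction is the main obstacle}; I expect to need the standard argument from \cite{Dolgachev} for the isotropic vectors in $K^\perp$ that are nef.
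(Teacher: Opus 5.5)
Your argument for (1) is the paper's, and your Hodge-index treatment of the case $k=9$, $D\cdot K=0$ in (2) is fine. The rest of (2), however, has a genuine gap. Parts (3) and (4)(i) inherit it, because they apply (2) on the contracted surface.

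The gap is the sentence ``$D^2>0$, by nefness plus positivity on some curve''. This is not a valid principle: $h-e_1$ is nef, positive on $e_1$, and has square $0$. Nakai--Moishezon needs $D^2>0$ as a separate input, and for $k\geq10$, or $k=9$ with $D\cdot K<0$, nothing in your proposal supplies it.

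The missing idea, which is the core of the paper's proof, is to apply Lemma~\ref{E positive surface case}(2) to $D$ itself, not only to curves. First, $h^2(D)=h^0(K-D)=0$, since $h\cdot(K-D)<0$ with $h$ nef. Next, $\chi(D)=1+\frac12(D^2-D\cdot K)\geq1$, so $D$ is effective and $D=l(-K)+\sum m_e e$ with $l,m_e\in\ZZ_{\geq0}$. Then $D^2=l\,D\cdot(-K)+\sum m_e\,D\cdot e$ is positive as soon as some $m_e>0$. Otherwise $D=l(-K)$, and $D^2\geq0$ forces $k\leq9$. Only after this does your curve-by-curve positivity conclude.

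Two further corrections. First, your reason for excluding curves $C\equiv a(-K)$ when $k\geq10$ is a non sequitur. The facts $K^2<0$ and $-K\cdot e=1$ hold just as well when the points lie on a smooth cubic, where $-K$ is effective. What is really used (tacitly in the paper too) is that $k\geq10$ general points lie on no cubic.

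Second, in (4)(i) you cannot perform ``the same contraction'' all at once. Your disjointness argument used $D^2>0$, and it fails for $D^2=0$: for $D=h-e_1$ on $\Bl_3\PP^2$, the curves $e_2$ and $h-e_1-e_2$ are both orthogonal to $D$ and they meet. Contract one $(-1)$-curve at a time, as the paper does.

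For (4)(ii) your route genuinely differs from the paper's. The paper contracts to $S'=\Bl_9\PP^2$ and uses (2) there to get $D=\pi^*(-mK_{S'})$. The step you flag as the main obstacle closes quickly.

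By (1), $\cW$ preserves $\Nef(S)^{K\leq0}$, so the degrees $(w\cdot D)\cdot h$ are nonnegative integers and a degree-minimal representative $D=dh-\sum m_ie_i$ exists. After sorting, nefness gives $m_1\geq\dots\geq m_k\geq0$, and minimality gives $d\geq m_1+m_2+m_3$. Using $\sum m_i=3d$ and $\sum_{i\geq3}m_i^2\leq m_3(3d-m_1-m_2)$, one gets
\[
0=D^2=d^2-\sum_i m_i^2\ \geq\ (d-m_1)(m_1-m_3)+(d-m_2)(m_2-m_3)+d\,(d-m_1-m_2-m_3)\ \geq\ 0 .
\]
Since $d>0$, equality throughout forces $m_1=m_2=m_3=m$, $d=3m$, and $m_i\in\{0,m\}$ for $i\geq3$. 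Hence exactly nine of the $m_i$ equal $m$.

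This yields $k\geq9$ and $D\in\cW\cdot m(3h-e_1-\dots-e_9)$ without using (2). It also gives the Weyl-orbit form directly, whereas the paper leaves implicit the identification of $\pi^*(-K_{S'})$ with an element of that orbit.
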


\bp
The inclusion $\Nef(S)^{K\leq0}\subseteq \big(K\leq0\big)\cap\bigcap_{e\in \mathcal R}\big(e\geq0\big)$ is obvious. To prove the reverse inclusion and establish (1), observe that $\NE(S)\subseteq\overline{\RR_{\geq0}(-K)+\sum_{e\in \mathcal R} \RR_{\geq0} e}$ by Lemma~\ref{E positive surface case} and consider the dual cones. This proves (1). 

Let $D\in \Pic(S)\cap\Nef(S)^{K\leq0}$.
If $k\leq8$, then $S$ is a del Pezzo surface and statements (2)--(4) are well known. 
So from now on we assume that $k\geq9$.

Suppose that $D\cdot e>0$ for every $e\in \mathcal R$ and let us prove (2). 
By assumption, $D^2\geq0$ and $D\cdot (-K)\geq0$. 
Since $h^2(D)=h^0(K-D)=0$, it follows from Riemann-Roch that $D$ is an effective divisor. 
By Lemma \ref{E positive surface case}(2), we can write
$D=l(-K)+\sum m_\al e_\al$, for some $e_\al\in \mathcal R$ and integers $l\geq0$, $m_\al\geq0$. 
If all $m_\al=0$, then $D=l(-K)$, and since $D^2\geq0$ and we assume $k\geq9$, we must have $k=9$. 
Thus, we may assume that at least one $e_\al$ appears with coefficient $m_\al>0$. 
Our assumptions $D\cdot(-K)\geq0$ and $D\cdot e_\al>0$ then imply that 
$$
D^2=l(D\cdot(-K))+\sum m_\al (D\cdot e_\al)>0.
$$
Suppose that $D$ is nef but not ample. 
By the Nakai-Moishezon criterion, there exists a curve $C\subset S$ such that $D\cdot C=0$. 
By Lemma \ref{E positive surface case}, $C=a(-K)+\sum b_\be e_\be$ for some $a, b_\be\in\ZZ_{\geq0}$. 
Then the condition that 
$D\cdot C=D\cdot a(-K)+\sum b_\be (D\cdot e_\be)=0$ implies that $C=a(-K)$ and $D\cdot K=0$. 
But this is only possible if $k=9$ and $D=l(-K)$. This proves (2).

We now prove (3) and (4). Assume that $D\neq 0$ and that $D$ is nef but not ample. 
By (2), either $k=9$ and $D=-mK$ for some $m\in\ZZ_{>0}$, or $D\cdot e=0$ for some $e\in \mathcal R$. 
The first case is a particular case of (4.ii).
Now assume that $D\cdot e=0$ for some $e\in \mathcal R$.
By subsequently contracting $-1$-curves with zero intersection with $D$, we obtain a smooth blowdown $\pi: S\ra S'=\Bl_{k'}\PP^2$, with $0\leq k'<k$, 
and a nef divisor $D'$ on $S'$ such that $D'\cdot e'>0$ for all $-1$-curves $e'$ on $S'$, and $D=\pi^*D'$. 
In particular, $(D')^2=D^2$.
If $k'\geq 2$, we can apply induction, and (2) implies that either $D'$ is ample or $k'=9$ and $D'=-mK_{S'}$ for some $m\in\ZZ_{>0}$.
In the first case we have (3), and in the latter case we have (4.ii).
Assume now that $k'\leq 1$. 
The condition that $D'$ is a nef divisor on $S'$ and $D'\cdot e'>0$ for all $-1$-curves $e'$ on $S'$ implies that 
either $D'$ is ample, or $k'=1$ and $D'=m(h'-e')$, where $m\in\ZZ_{>0}$, $h'$ is the pullback of a line in $\PP^2$ and $e'$ is the unique $-1$-curve on $S'=\Bl_1\PP^2$. 
In the first case we have (3), and in the latter case we have (4.i).
\ep

\begin{corollary}\label{FDomain}
Let $S=\Bl_k\PP^2$, with $k\geq4$. 
Then the extremal rays of the nef cone $\Nef(S)$ in the half-space $(K<0)$ are precisely the following:
\begin{itemize}
    \item $\RR_{\geq0}C$ for some $C\in \cW\cdot h$;
    \item $\RR_{\geq0}C$ for some $C\in\cW\cdot (h-e_1)$.
\end{itemize}
\end{corollary}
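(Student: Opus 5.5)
Our starting point is Lemma~\ref{K negative nef}(1), which exhibits $\Nef(S)^{K\leq0}$ as the locally polyhedral cone cut out by $K\leq 0$ and the half-spaces $e\geq0$, $e\in\mathcal R$. The plan is to take an extremal ray $R=\RR_{\geq0}D$ of $\Nef(S)$ with $D\cdot K<0$ and show that $D$ is, up to scaling, in $\cW\cdot h$ or $\cW\cdot(h-e_1)$. Conversely, we must check that these classes do span extremal rays. Since $\cW$ preserves $\mathcal R$ and $K$, it preserves $\Nef(S)^{K\leq0}$, and hence its extremal rays. It therefore suffices to handle $h$ and $h-e_1$ themselves.

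\emph{Extremality of $h$ and $h-e_1$.} Both classes are nef with $K$-degree $-3$ and $-2$ respectively. The class $h$ is orthogonal to $e_1,\dots,e_k$, and these span a hyperplane in $\N^1(S)$, of dimension $k+1$. Hence $h$ lies on at least $k$ independent facets $e_i^\perp$, which forces $\RR_{\geq0}h$ to be extremal. Similarly, $h-e_1$ is orthogonal to $e_2,\dots,e_k$ and to $h-e_1-e_j$ for $j\geq2$, both of which lie in $\mathcal R$. The class $h-e_1-e_2$ is independent of $e_2,\dots,e_k$, so we again obtain $k$ independent facets, and $\RR_{\geq0}(h-e_1)$ is extremal.

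\emph{Extremal rays are of this form.} Let $R$ be extremal with $D\cdot K<0$, and take $D$ integral. Because $D\cdot K\neq0$, $D$ lies in the interior of the half-space $K\leq0$. Locally near $D$, the cone is therefore cut out only by facets $e^\perp$ with $e\in\mathcal R$. We want to know that only finitely many of these are relevant near $D$; this is the step I expect to be the main obstacle. To get it, note that the set $\{e\in\mathcal R : D\cdot e\leq c\}$ is finite for fixed $c$. This follows because, for $D$ nef with $D\cdot K<0$, a Hodge-index argument on the negative-definite complement bounds $e$. A cleaner route uses the induction already established: by Lemma~\ref{K negative nef}, a non-ample $D$ is either a pullback from a blowdown or a multiple of a $D_0$ as in (4i). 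Since $D$ is extremal and $k\geq4$, it is not ample. If $D^2=0$, then case (4ii) is excluded by $D\cdot K<0$, so $D=m\pi^*(h'-e')$. Since $\pi$ contracts disjoint $(-1)$-curves forming part of an exceptional configuration, and $\cW$ acts transitively on such configurations (Dolgachev), this is $\cW\cdot(h-e_1)$. If $D^2>0$, then $D=\pi^*D'$ with $D'$ ample on $S'=\Bl_{k'}\PP^2$.

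It remains to treat the case $D^2>0$. Extremality of $D$ in $\Nef(S)$ forces $\pi^*\Nef(S')$, which is a face, to have $D$ on an extremal ray. With $D'$ ample, this is only possible if $\dim\N^1(S')=1$, i.e.\ $S'=\PP^2$. Then $D=m\pi^*(\text{line})$, and up to $\cW$ (again by transitivity on exceptional configurations of $k$ disjoint $(-1)$-curves) we get $D\in\cW\cdot h$. With this reduction the finiteness issue disappears, and the remaining point to verify carefully is that any sequence of contractions of disjoint $(-1)$-curves to $\PP^2$ or $\Bl_1\PP^2$ is $\cW$-equivalent to the standard one. For this I would use Proposition~\ref{partial classif} and \cite[Corollary 1]{Dolgachev}.
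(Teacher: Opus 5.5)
\textbf{Gap: rationality of the extremal ray.} The problem is at the start of your second part, where you write ``take $D$ integral.'' An extremal ray of $\Nef(S)$ is not a priori rational. Lemma~\ref{K negative nef}(2)--(4), on which your whole case analysis rests, is stated and proved only for $D\in\Pic(S)$: its proof uses Riemann--Roch to make $D$ effective, Nakai--Moishezon, and writes $D=mD_0$.

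So your ``cleaner route'' does not make the finiteness issue disappear. The finiteness is exactly what shows that $\RR_{\geq0}D$ is rational, and this is the one substantive step in the paper's proof. There, $\mathcal R$ is locally discrete away from $K^\perp$, so near a point $D$ with $D\cdot K<0$ the cone $\Nef(S)$ is cut out by finitely many hyperplanes $e^\perp$. An extremal ray is then the intersection of $k$ linearly independent such hyperplanes with $e\in\Pic(S)$, hence is spanned by an integral class. Only after this are Lemma~\ref{K negative nef}(3),(4) applied. Without integrality, the blow-down reduction in that lemma also gives nothing: a real class positive on every curve need not be ample.

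\textbf{Your finiteness sketch needs strengthening.} Finiteness of $\{e\in\mathcal R : D\cdot e\le c\}$ for each $c$ does not by itself give local finiteness of the walls $e^\perp$ near $D$. Also, the ``negative-definite complement'' exists only when $D^2>0$. Here is an argument that gives what you need. Suppose infinitely many distinct $e_m\in\mathcal R$ have $e_m^\perp$ meeting every neighbourhood of $D$. After passing to a subsequence, $e_m/\|e_m\|\to z\in\NE(S)$ with $\|z\|=1$, $z^2=0$, $z\cdot K=0$ and $z\cdot D=0$. By the Hodge index theorem, $z=0$ if $D^2>0$. If $D^2=0$, then $z\in\RR D$, which contradicts $z\cdot K=0$ since $D\cdot K<0$.

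Hence near $D$ only the finitely many walls through $D$ matter. Extremality forces their common intersection to be the line $\RR D$, which is therefore rational. Insert this, rescale $D$ to a primitive integral class, and the rest of your argument works. Your treatment of the converse direction, which the paper leaves implicit, is correct. Your face argument forcing $S'\cong\PP^2$ when $D^2>0$ also fills in a step the paper passes over quickly.
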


\bp
By Lemma~\ref{K negative nef}(1), 
\[
\Nef(S)^{K\leq0}=(K\leq0)\cap\bigcap_{e\in \mathcal R}(e\geq0).
\]
Note that the set $R$ is locally discrete in $(K<0)$. 
Therefore, any extremal ray $\Gamma\subset \Nef(S)$ contained in the half-space $(K<0)$ is the intersection of $k$ hyperplanes of the form $(e=0)$ for some $e\in \mathcal R$. In particular, $\Gamma$ is rational, i.e., it is generated by a class $C\in \Pic(S)$.
Applying Lemma~\ref{K negative nef}(3) and (4), we conclude that $C\in \cW\cdot h$ or $C\in \cW\cdot (h-e_1)$. Here we use the fact that, when $k\geq 4$, the condition on $D_0$ in (4.i) is equivalent to $D_0\in \cW\cdot (h-e_1)$.
\ep

\begin{rmk}\label{fundamental_domain}
For $k\geq 9$, \cite[Theorem 6.2]{DUrso} explicitly exhibits a rational polyhedral fundamental domain $\Gamma$ for $\Nef(S)^{K\leq0}$ with respect to the action of the Weyl group $\cW$. The polyhedral cone $\Gamma$ has exactly two extremal rays $\Gamma$ satisfying $\Gamma^2=0$, namely $\RR_{\geq 0}\cdot (h-e_1)$ and $\RR_{\geq 0}\cdot (3h-e_1-\ldots-e_9)$.
\end{rmk}

\begin{corollary}\label{La}
Let $S=\Bl_k\PP^2$, with $k\geq0$, and consider the $\QQ$-divisor
\[
L_a:=-K+ah=(a+3)h-\sum_{i=1}^{k} e_i, \quad \text{ with } \ a\in\QQ_{\geq0}.
\]
Then $L_a$ is ample and $L_a\cdot (-K)>0$ if one of the following holds: 
\bi
\item $k<9$ and $a\geq 0$,
\item $k\geq9$ and $a> \frac{k}{3}-3$.
\ei
\end{corollary}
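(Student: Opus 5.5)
The natural route is to check ampleness of $L_a$ through Lemma~\ref{K negative nef}, which describes $\Nef(S)^{K\leq0}$ as $(K\leq0)\cap\bigcap_{e\in\mathcal R}(e\geq0)$ and, in part (2), gives a criterion for ampleness. Since $L_a$ is a $\QQ$-divisor, I will replace it by a positive integer multiple $mL_a\in\Pic(S)$; ampleness and the sign conditions are unchanged. For $k\leq 1$ the statement is direct: $L_a=(a+3)h$ or $(a+3)h-e_1$ with $a+3>1$, both ample on $\PP^2$ or $\Bl_1\PP^2$. For $2\leq k\leq 8$, $S$ is del Pezzo and $-K$ is ample. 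The class $ah$ is nef, so $L_a=-K+ah$ is ample, and $L_a\cdot(-K)=K^2+3a=9-k+3a>0$.

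Now assume $k\geq 9$. First I compute
$$L_a\cdot(-K)=K^2+3a=9-k+3a,$$
which is positive exactly when $a>\frac k3-3$. Next I check $L_a\cdot e>0$ for every $-1$-curve $e\in\mathcal R$. By adjunction $-K\cdot e=1$, and $h\cdot e\geq0$ because $h$ is nef, being a pullback of the hyperplane class. Hence $L_a\cdot e=1+a\,(h\cdot e)\geq 1>0$. Therefore $mL_a$ lies in $(K<0)\cap\bigcap_{e}(e>0)$, which is contained in $\Nef(S)^{K\leq0}$ by Lemma~\ref{K negative nef}(1).

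Lemma~\ref{K negative nef}(2) then says that $mL_a$ is ample unless $k=9$ and $mL_a=-m'K$. That exception is impossible, since it would force $L_a\cdot K=0$, whereas we showed $L_a\cdot K<0$. (One also sees it directly: $a>0$ when $k=9$, so $L_a$ is not a multiple of $-K$.) Hence $L_a$ is ample, which proves the corollary.

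The only delicate point is the reduction to integral classes so that Lemma~\ref{K negative nef}(2) applies, together with ensuring the hypothesis $k\geq2$ of that lemma. The small cases are handled separately, so I expect no real obstacle.
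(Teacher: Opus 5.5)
Your proof is correct and follows essentially the paper's argument: compute $L_a\cdot(-K)=9-k+3a$, observe that $L_a\cdot e=1+a\,(h\cdot e)>0$ for every $(-1)$-curve $e$ because $-K\cdot e=1$ and $h$ is nef, and conclude ampleness from Lemma~\ref{K negative nef}. The differences are cosmetic. The paper dispatches $k\le 9$ at once by noting that $-K$ is nef there. You instead fold $k=9$ into the lemma-based case and exclude the exceptional case $mL_a=-m'K$ of part (2), and you also make the passage to integral multiples and the cases $k\le 1$ explicit.
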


\bp
If $k\leq 9$, then $-K$ is nef and the result is clear.
So we assume $k>9$.
We have $L_a\cdot (-K)>0$ if and only if $a> \frac{k}{3}-3$.
By Lemma~\ref{ample K-negative}, $L_a$ is ample if $L_a\cdot e>0$ for every $e\in \mathcal R$. But this is clear, as $(-K)\cdot e=1$ for every $e\in \mathcal R$ and $h$ is nef. 
\ep

%%%%%%%%%%%%%%%%%%%%%%%%%%%%%%%%%%%%%%%%%%%%%%%%%%%%%%%%%%%%%%%%%%%%%%%%%

\section{Moduli spaces of semistable sheaves on blowups of $\PP^2$}\label{section:moduli}

In this section, we introduce and describe moduli spaces of semistable torsion-free sheaves on blowups of $\PP^2$ at a finite set of points.
The variation of these moduli spaces under changes of polarization on the surface was extensively investigated in the 1990s
(\cite{Ellingsrud_Gottsche}, \cite{FQ}, \cite{G}, \cite{MW}).
As we shall review, this variation is governed by a wall-and-chamber decomposition of the ample cone of the surface.
Throughout this section, we denote by $S$ the blowup of $\PP^2$ at $k=n+4\geq 6$ distinct points, $Q_1, \dots, Q_{k}$,
not necessarily Cremona-general,
by $e_i\subset S$ the exceptional curve over $Q_i$, as well as its class in $\Pic(S)$, by $h\in \Pic(S)$ the pullback of the hyperplane class of $\PP^2$, and by $K$ the canonical class $K_S$ of $S$.

\subsection{Gieseker stability and slope stability}
We start by reviewing the notions of Gieseker stability and slope stability and
introduce the relevant moduli spaces of torsion-free sheaves on $S$. 
We refer to \cite{HL} for a general background.

Let $F$ be a coherent sheaf on $S$. 
We will often make use of the following formula, which follows from Hirzebruch Riemann-Roch:
\begin{equation}\label{HRR1}
\chi(F)=\ch_2(F)+\frac{1}{2}c_1(F)\cdot (-K)+\rk F,
\end{equation}
where $\ch_2(F)$ denotes the second Chern character of $F$.

\begin{defn}\label{def:stability}
Let $L$ be an ample line bundle on $S$. 
Let $F$ be a rank $2$ torsion free sheaf  on $S$.
We say that $F$ is \emph{(Gieseker) $L$-stable} (respectively, \emph{$L$-semistable}) if, for any subsheaf 
$F_0\subset F$ of rank $1$ and $m\gg 0$, one has 
$$\chi(F_0\otimes L^{\otimes m}) <\frac{1}{2}\chi(F\otimes L^{\otimes m})\quad (\text{respectively,}\ \leq).$$
We say that $F$ is \emph{$\mu_L$-stable} (respectively, \emph{$\mu_L$-semistable}) if, for any subsheaf 
$F_0\subset F$ of rank $1$, one has 
$$\mu_L(F_0):=\text{c}_1(F_0)\cdot L< \mu_L(F):=\frac{1}{2}(\text{c}_1(F)\cdot L)\quad (\text{respectively,}\ \leq).$$
\end{defn}

\begin{rmk}\label{rmk:stability}
Let $L$ and $F$ be as above. 
By (\ref{HRR1}), 
$$\chi(F\otimes L^{\otimes m})=\chi(F)+m\cdot \rk(F)\left(\mu_L(F)+\frac{L^2-K\cdot L}{2}\right).$$
Therefore $F$ is $L$-stable (respectively, $L$-semistable)  if and only if $F$ is $\mu_L$-semistable, and, for any subsheaf 
$F_0\subset F$ of rank $1$ with $\mu_L(F_0)= \mu_L(F)$, one has $\chi(F_0)< \frac{\chi(F)}{2}$ (respectively, $\leq$). 
In particular, one has the following chain of implications
$$\mu_L\text{-stable}\quad\Rightarrow\quad L\text{-stable}\quad\Rightarrow\quad L\text{-semistable}\quad\Rightarrow\quad \mu_L\text{-semistable}.$$
\end{rmk}

\subsection{Moduli spaces of semistable sheaves}

In order to consider moduli spaces of torsion-free sheaves $F$ on $S$, we must fix some discrete invariants of $F$, namely,
$\rk(F)$, $\text{c}_1(F)$ and $\text{c}_2(F)$. Remark~\ref{rem:expectation} suggests taking
$$
\rk(F)=2,\quad \text{c}_1(F)=h-\sum_{i=1}^k e_i,\quad \text{c}_2(F)=0.
$$
However, the correspondence $F\mapsto F\otimes\OO_S(h)$ induces isomorphisms between the moduli spaces for these invariants and those for
$$
\rk(F)=2,\quad \text{c}_1(F)=-K,\quad \text{c}_2(F)=2.
$$
We choose to use the latter in order to follow \cite{Mukai} and \cite{CCF}. 

\begin{assump} 
We consider torsion-free sheaves $F$ on $S$ with Chern classes
\begin{equation} \label{data} \tag{\ddag}
\rk(F)=\ch_0=2,\quad \text{c}_1(F)=-K,\quad \text{c}_2(F)=2.
\end{equation}
\end{assump}

\begin{notn}\label{not:M_L}
Let $L$ be an ample line bundle on $S$. 
We denote by $M_L$ the moduli space of \emph{$L$-semistable torsion free sheaves} $F$ with (\ref{data}) \cite[Theorem 4.3.4]{HL}.
We denote by $M^{\mu ss}_L$ the moduli space of  \emph{$\mu_L$-semistable torsion free sheaves} with (\ref{data}), as defined in \cite[Def. 8.2.7]{HL}. 
We denote by $M^{s}_L\subseteq M_L$ the open subset corresponding to \emph{$L$-stable torsion free sheaves}, and by  
$M^{\mu s}_L\subseteq M^{\mu ss}_L$ the open subset corresponding to \emph{$\mu_L$-stable torsion free sheaves}.   
\end{notn}

We will be particularly interested in the moduli spaces $M_L$ and $M^{\mu ss}_L$ when $L$ is a polarization such that $K \cdot L < 0$. 
Lemma~\ref{smoothness} below shows that these moduli spaces are smooth at stable points and computes their dimension, while
Lemma~\ref{sstable vb} identifies necessary conditions for the moduli space $M^{\mu ss}_L$ to be nonempty.

\begin{lemma}\label{smoothness}
Suppose that $K \cdot L < 0$, and 
let $F$ be a $L$-stable torsion-free sheaf with (\ref{data}). Then the moduli space  $M_L$ is smooth at $[F]$ and 
$$\dim_{[F]} M_L=k-4=n.$$ 
\end{lemma}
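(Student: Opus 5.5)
The plan is the standard deformation-theoretic argument for moduli of sheaves on surfaces (see \cite{HL}). The Zariski tangent space to $M_L$ at a stable point $[F]$ is $\Ext^1(F,F)$, and the obstructions lie in the traceless part $\Ext^2(F,F)_0$ of $\Ext^2(F,F)$. So I would show that $\Ext^2(F,F)_0=0$, which gives smoothness at $[F]$. Then I would compute $\dim \Ext^1(F,F)$ from $\chi(F,F)$.

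For the vanishing, Serre duality gives $\Ext^2(F,F)\cong \Hom(F,F\otimes K)^\vee$. I would show that $\Hom(F,F(K))=0$, which forces the traceless part to vanish as well. Take a nonzero map $\varphi\colon F\to F(K)$. Since $F$ is $L$-stable, it is $\mu_L$-semistable by Remark~\ref{rmk:stability}. The sheaf $F(K)$ is $L$-stable with the same rank and with slope
$$\mu_L(F(K))=\mu_L(F)+K\cdot L<\mu_L(F).$$
The standard argument then applies. The image $\operatorname{im}\varphi$ is a quotient of $F$, so its slope is at least $\mu_L(F)$ (or it equals $F$ in rank $2$). It is also a subsheaf of $F(K)$, so its slope is at most $\mu_L(F(K))<\mu_L(F)$. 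This is a contradiction.

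To state the rank-$1$ case more carefully: if $\operatorname{im}\varphi$ has rank $1$, then its kernel $F_0\subset F$ satisfies $\mu_L(F_0)\le\mu_L(F)$, so $\mu_L(\operatorname{im}\varphi)\ge \mu_L(F)$. But semistability of $F(K)$ gives $\mu_L(\operatorname{im}\varphi)\le \mu_L(F)+K\cdot L$. If instead $\varphi$ has rank $2$, it is injective, and $c_1(F(K))-c_1(F)=2K$ must be effective (it is the class of the cokernel's support). This contradicts $K\cdot L<0$. Hence $\Ext^2(F,F)=0$. We also have $\Hom(F,F)=\CC$, since stable sheaves are simple.

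For the dimension, I would use $\dim\Ext^1(F,F)=1-\chi(F,F)$. Riemann--Roch on a surface gives
$$\chi(F,F)=\rk(F)^2\chi(\OO_S)-\Delta(F),\qquad \Delta(F)=4c_2(F)-c_1(F)^2.$$
Here $\chi(\OO_S)=1$, $c_1^2=K^2=9-k$, and $c_2=2$. So $\Delta=8-9+k=k-1$, and $\chi(F,F)=4-(k-1)=5-k$. Therefore $\dim\Ext^1(F,F)=k-4=n$.

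The only delicate step is the vanishing of $\Hom(F,F(K))$ for torsion-free, possibly non-locally-free $F$. The slope argument above handles it, using only $K\cdot L<0$ and $\mu$-semistability. In the rank-$2$ injective case, one should note that $\det\varphi$ gives a nonzero section of $\OO(2K)$, which is impossible because $2K\cdot L<0$.
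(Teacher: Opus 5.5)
Your proof is correct and follows the paper's argument. Serre duality reduces the vanishing of $\Ext^2(F,F)$ to $\Hom(F,F\otimes K)=0$, which follows from $\mu_L$-semistability and $K\cdot L<0$; the dimension is then the expected dimension $1-\chi(F,F)=k-4$. One small correction: twisting by $K$ preserves $\mu_L$-semistability but not Gieseker $L$-stability in general (it can fail when $L$ lies on a wall), so you should say $F(K)$ is $\mu_L$-semistable, which is all your argument actually uses.
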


\bp
We show that  $\Ext^2(F,F) = 0$. 
Using Serre duality, it is enough to prove that $\Hom(F, F\otimes K)=0$. As $F$ is $\mu_L$-semistable, 
$F\otimes K$ is also $\mu_L$-semistable. 
Since $\mu_L(F\otimes K)-\mu_L(F)=L\cdot K<0$, it follows that $\Hom(F, F\otimes K)=0$. It follows from \cite[Thm. 4.5.4, Definition 4.5.6]{HL} that 
$M_L$ is smooth at $[F]$ and that $\dim_{[F]} M_L$ equals the expected dimension 
\begin{align*}
\dim\big(\Ext^1(F,F)\big) &= 2\rk(F)\text{c}_2(F)-(\rk(F)-1)\text{c}_1(F)^2-(\rk(F)^2-1)\chi(\cO_S)\\
&=k-4=n.
\end{align*}
\ep

\begin{lemma}\label{sstable vb}
Let $h'\in \Pic(S)$ be an effective divisor class with $(h')^2=1$ and $h'\cdot (-K)=3$.
Let $L$ be an ample line bundle on $S$ such that $L \cdot K < 0$  and $M^{\mu ss}_L\neq \emptyset$.
Then $$(2h'+K)\cdot L\geq 0.$$ 
\end{lemma}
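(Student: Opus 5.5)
We argue by contradiction: assume $(2h'+K)\cdot L<0$ and show that no $\mu_L$-semistable torsion-free sheaf $F$ with (\ref{data}) exists. Since $M^{\mu ss}_L$ only parametrizes S-equivalence classes of $\mu_L$-semistable sheaves, nonemptiness gives such an $F$. The idea is to produce a rank one subsheaf of $F$ of the form $\cO_S(-K-h')\otimes I_Z$ (or a twist of $\cO_S(h'+\dots)$) destabilizing $F$. The relevant quantity is
$$
\mu_L(\cO_S(-K-h'))-\mu_L(F)=(-K-h')\cdot L-\tfrac12(-K\cdot L)=-\tfrac12(2h'+K)\cdot L .
$$
Under our assumption this is positive, so any nonzero map $\cO_S(-K-h')\to F$ contradicts $\mu_L$-semistability. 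Its saturation has $c_1=-K-h'+(\text{effective})$, and $L$ is ample, so the slope only increases.

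It therefore suffices to show $\Hom(\cO_S(-K-h'),F)=H^0(F(K+h'))\neq 0$. We compute by Riemann--Roch (\ref{HRR1}). For $G=F(K+h')$ we have $\rk G=2$ and $c_1(G)=-K+2(K+h')=K+2h'$. Using $\ch_2(F)=\tfrac12K^2-2$ and $\ch_2(G)=\ch_2(F)+c_1(F)\cdot(K+h')+(K+h')^2$, together with $K^2=9-k$, $h'^2=1$, $K\cdot h'=-3$, a routine computation gives
$$
\chi(G)=\tfrac12(9-k)-2-(9-k)+3+(9-k)-6+1-\tfrac12(9-k)+3+2=1 .
$$
The sign of this expression should be double-checked; the terms are $\ch_2(G)+\tfrac12c_1(G)\cdot(-K)+2$, where $\tfrac12(K+2h')\cdot(-K)=\tfrac12(k-9)+3$. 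The expected outcome is $\chi(G)>0$.

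Hence $h^0(G)+h^2(G)\ge 1$. By Serre duality, $h^2(G)=\hom(G,\cO_S(K))=\hom(F,\cO_S(-h'))$. Now $F$ is $\mu_L$-semistable and $\cO_S(-h')$ has slope $-h'\cdot L<0<\tfrac12(-K\cdot L)=\mu_L(F)$. The image of a nonzero map $F\to\cO_S(-h')$ would be a rank one quotient of slope at most $-h'\cdot L$ (since $h'$ is effective and $L$ is ample, $h'\cdot L>0$), which is below $\mu_L(F)$ and contradicts semistability. Thus $h^2(G)=0$, so $h^0(G)\ge1$, which gives the destabilizing map and the contradiction.

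The main obstacle is the Euler characteristic computation: it must actually come out positive with the stated invariants. If it fails, I would instead try the dual approach, maps $F\to\cO_S(h')\otimes I_Z$, using $\chi(F(-h'))$, and pick whichever gives a positive Euler characteristic. The inequality to be violated is symmetric under these choices, since $\mu_L(\cO_S(h'))-\mu_L(F)=\tfrac12(2h'+K)\cdot L$ as well.
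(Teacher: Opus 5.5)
Your argument is correct and is essentially the paper's proof. Since $\det F=\cO(-K)$, for locally free $F$ one has $F(K+h')\cong F^\vee(h')$. So your computation $\chi(F(K+h'))=1$ and your vanishing of $H^2$ via $\Hom(F,\cO(-h'))=0$ are exactly the paper's $\chi(E^\vee(h'))=1$ and $\Hom(\cO(h'),E\otimes K)=0$ for $E=F^{\vee\vee}$. Your arithmetic is right, so the hedge and the fallback are unnecessary. You read the nonzero section as a destabilizing subsheaf $\cO(-K-h')\subset F$, whereas the paper reads it as a quotient $E\to\cO(h')$.

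The only real difference is that the paper passes to the reflexive hull in order to identify $H^0(E^\vee(h'))$ with $\Hom(E,\cO(h'))$. You work directly with the torsion-free $F$, using that $\Hom(\cO(D),F)=H^0(F(-D))$ and Serre duality hold for any coherent sheaf. This is slightly cleaner, because $\chi$ is computed with the actual $c_2(F)=2$. For $E$ one has $c_2(E)=2-\length(E/F)$, hence $\chi(E^\vee(h'))=1+\length(E/F)$; this is still $\geq 1$, so the paper's argument is unaffected.
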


\bp
Let $F\in M^{\mu ss}_L\neq \emptyset$, and consider its double dual $E:=F^{\vee\vee}$. 
Then $E$ is a $\mu_L$-semistable vector bundle having the same slope as $F$.
Let us compute $\chi(E^\vee(h'))=\chi(F^\vee(h'))$ using formula (\ref{HRR1}).
We have: 

\begin{align*}
\ch_2(F^\vee(h'))&=\ \ch_2(F^\vee)+\text{c}_1(F^\vee)\cdot h'+(h')^2=\\
                &=\left(\frac{K^2}{2}-2\right)+K\cdot h'+(h')^2= \\
                &= \ \frac{K^2}{2}-4, 
\end{align*}
$$\text{c}_1(F^\vee(h'))=\text{c}_1(F^\vee)+2h'=K+2h'.$$
Thus it follows from (\ref{HRR1}) that 
\begin{equation}\label{chiE'(h)}
\chi(E^\vee(h'))=\chi(F^\vee(h'))=1.
\end{equation}
Since $E$ is a vector bundle, we have 
$\HH^i(E^{\vee}(h'))\cong\Ext^i(E,\cO(h'))$ for every $i\geq 0$. By Serre duality, 
$$\Ext^2(E,\cO(h'))\cong\Hom(\cO(h'), E\otimes K)^{\vee}.$$
Since $L \cdot K < 0$ and  $L$ is ample, we have 
$$\mu_L(\cO(h'))= h'\cdot L>0>\frac{K\cdot L}{2} = \mu_L (E\otimes K).$$
Since $E$ is $\mu_L$-semistable, it follows that $\Hom(\cO(h'), E\otimes K)=0$, i.e., $\HH^2(E^\vee(h'))=0$. 
By (\ref{chiE'(h)}), we must have:
$$\HH^0(E^\vee(h'))\cong \Hom(E,\cO(h'))\neq 0.$$
Since  $E$ is  $\mu_L$-semistable, this implies that
$$\mu_L(E)=\frac{-K\cdot L}{2}\leq \mu_L(\cO(h'))=h'\cdot L,$$
and thus 
$$(2h'+K)\cdot L\geq 0.$$ 
\ep

\begin{rmk}\label{rmk:emptyM_L}
We note that there exist polarizations $L\in \Amp(S)$ with $L \cdot K < 0$  for which $M^{\mu ss}_L= \emptyset$.
Indeed, take $L=dh-\sum_{i=1}^{n} e_i$ with $d\gg 0$
and apply Lemma~\ref{sstable vb} with $h'=h$. 
\end{rmk}

\subsection{The variation of the moduli spaces $M_L$} \label{subsection:variation_of_ML}
The variation of the moduli spaces $M_L$ and $M^{\mu ss}_L$ when we vary the polarization $L\in \Amp(S)$
has been well studied, particularly when $K \cdot L < 0$. 
Our presentation follows \cite{Ellingsrud_Gottsche}, but we also refer to \cite{FQ, G, MW, HL}.
As we shall see, the moduli space $M_L$ changes when the polarization $L$ crosses certain hyperplanes in $\N^1(S)$, called \emph{walls}, while it remains constant when $L$ belongs to a fixed \emph{chamber}.
We define the stability walls and chambers in our particular setting (\ref{data}).
This is a special case of \cite[Definition 2.4]{Ellingsrud_Gottsche}. 

\begin{defn}\label{def:wall}
A \emph{(stability) wall} in $\N^1(S)$ is a hyperplane  of the form
$$(2D+K)^{\perp}=\{L\in \N^1(S)\ | \ (2D+K)\cdot L=0\},$$
where $D\in \Pic(S)$ satisfies the following conditions:
\begin{enumerate}
    \item $D\cdot (D+K)\geq -2$, and
    \item there is a polarization $L_0\in (2D+K)^{\perp}$. 
\end{enumerate}
Note that the set of walls is locally finite on $\Amp(S)$. 

A \emph{(stability) chamber} $\cC\subset \Amp(S)$ is a connected component of the complement of the union of all walls in the ample cone $\Amp(S)\subset \N^1(S)$. 
We say that a wall $(2D+K)^{\perp}$ \emph{bounds} a chamber $\cC$ if the closure of $\cC$ contains a nonempty open subset of $(2D+K)^{\perp}$. 
Two distinct chambers are \emph{neighbouring chambers} if the intersection of their closures contains a nonempty open subset of a wall.
\end{defn}

The following is \cite[Theorem 4.C.3]{HL} in our setting (\ref{data}).

\begin{prop}[{}]\label{prop:wall&chamber}
Let $L\in \Amp(S)$ be a polarization not lying on a wall. 
Then the notions of $\mu_L$-stability, $L$-stability, $L$-semistability and $\mu_L$-semistability all coincide. 
Moreover, the moduli space $M_L$ depends only on the chamber $\cC$ containing $L$. 
We write 
$$M_{\cC}:=M_L\quad  (L\in \cC),$$ 
and often refer to $L$-stability as $\cC$-stability. 
\end{prop}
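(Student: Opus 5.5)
The plan is to reduce everything to the standard wall-crossing analysis. Fix $L\in\Amp(S)$ not on a wall, and let $F$ be a rank $2$ torsion-free sheaf with (\ref{data}).

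\textbf{Step 1: $\mu_L$-semistability implies $\mu_L$-stability.} Suppose $F$ is $\mu_L$-semistable but not $\mu_L$-stable. Then there is a saturated rank-one subsheaf $F_0\subset F$ with $\mu_L(F_0)=\mu_L(F)$. Write $F_0=\cO_S(D)\otimes I_{Z_1}$ and the torsion-free quotient as $\cO_S(-K-D)\otimes I_{Z_2}$, with $Z_1,Z_2$ zero-dimensional. The equality of slopes reads $(2D+K)\cdot L=0$. Comparing second Chern classes gives
$$2=c_2(F)=D\cdot(-K-D)+\length(Z_1)+\length(Z_2)\geq -D\cdot(D+K).$$
This is exactly $D\cdot(D+K)\geq -2$. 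So, provided $2D+K\not\equiv 0$, the hyperplane $(2D+K)^\perp$ is a wall containing the polarization $L$, which contradicts the choice of $L$.

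\textbf{Step 2: the case $2D+K\equiv 0$.} The numerical case $2D+K\equiv 0$ cannot occur, because $K=-3h+\sum e_i$ is not divisible by $2$ in $\Pic(S)$: its $h$-coefficient is odd. This is the one place where I must be careful. On a rational surface numerical and linear equivalence agree, so $2D=-K$ is genuinely impossible.

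\textbf{Step 3: the notions coincide.} Step 1 shows that $\mu_L$-semistable sheaves are $\mu_L$-stable. Combined with the chain of implications in Remark~\ref{rmk:stability}, all four notions coincide.

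\textbf{Step 4: independence of the chamber.} Let $L,L'$ lie in the same chamber $\cC$. Suppose $F$ is $\mu_L$-stable but not $\mu_{L'}$-semistable. Take a destabilizing $F_0=\cO_S(D)\otimes I_{Z_1}$, so that $(2D+K)\cdot L<0<(2D+K)\cdot L'$. Along the segment from $L$ to $L'$, which is ample and stays inside $\cC$ by convexity, some $L_t$ satisfies $(2D+K)\cdot L_t=0$. The same $c_2$ computation as in Step 1 gives $D\cdot(D+K)\geq-2$, so $(2D+K)^\perp$ is a wall meeting $\cC$, a contradiction.

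The resulting families of stable sheaves therefore agree. Since $M_L$ is defined by a moduli functor depending only on the set of semistable sheaves, it follows that $M_L=M_{L'}$. The main point to verify is this last identification: the moduli functors coincide as subfunctors of the stack of torsion-free sheaves with (\ref{data}), and so their coarse (here fine up to the usual caveat) moduli spaces are equal. Alternatively, I would cite \cite[Theorem 4.C.3]{HL} for this formal step once the wall condition has been matched as above.
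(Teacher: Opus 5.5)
Your proof is correct and follows the same route as the paper, which gives no argument of its own and simply cites \cite[Theorem 4.C.3]{HL} in the setting (\ref{data}). Your Steps 1--4 are the standard proof of that theorem specialized to this case: the $c_2$ computation gives $D\cdot(D+K)\geq -2$, and Step 2 (that $2D+K\neq 0$ because $K$ is not $2$-divisible in $\Pic(S)$) plays the role of the parity condition on $c_1$ in Huybrechts--Lehn. The final identification $M_L=M_{L'}$ is indeed formal, since the two moduli functors coincide and each $M_L$ corepresents its functor.
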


\begin{rmk}\label{universal}
The moduli spaces $M^s_L$ always admit quasi-universal families \cite[Section 4.6]{HL}. When $n$ is odd, or when $L$ is in a chamber, 
the moduli spaces $M^s_L$ admit universal families. This follows for instance from \cite[Corollary 4.6.7]{HL} by taking a suitable polarization in the chamber. 
\end{rmk}

We now want to describe the variation of the $L$-stability condition when the polarization $L$ crosses a wall.  
To fix notation, suppose that $\cC^+$ and $\cC^-$ are neighbouring chambers separated by a wall $(2D+K)^{\perp}$.
We assume that $\cC^-\subset (2D+K)^{<0}$ and $\cC^+\subset (2D+K)^{>0}$. 
The general case is described in \cite[Proposition 2.7(6)]{Ellingsrud_Gottsche}: any torsion free sheaf $E$ with (\ref{data}) that is 
$\cC^-$-stable but not $\cC^+$-stable must fit into an exact sequence as follows
$$
0\ra \cO(D)\otimes\cI_{Z_1}\ra E\ra \cO(-K-D)\otimes\cI_{Z_2}\ra0,
$$
where $Z_1, Z_2\subset S$ are either empty or $0$-dimensional subschemes whose lenghts satisfy
$$
\length(Z_1)+\length(Z_2)=2+D\cdot (D+K)\geq 0. 
$$
This last inequality justifies condition~(1) of Definition~\ref{def:wall}.

We will be interested in walls for which there is a polarization $L\in (2D+K)^{\perp}$ with $K \cdot L < 0$.
We will show in Lemma~\ref{good_walls} below that in this case we have $2+D\cdot (D+K)=0$,
and so $Z_1 =Z_2=\emptyset$ in the exact sequence above. 
In particular, the extension $E$ is locally free. 
This special feature will greatly simplify the analysis of the variation of the moduli spaces $M_L$ when $L \cdot K < 0$.

\begin{defn}\label{K_negative_walls}
A \emph{$K$-negative wall} is a stability wall $(2D+K)^{\perp}$ containing a polarization 
$L\in (2D+K)^{\perp}$ with $K \cdot L < 0$. 

A \emph{$K$-negative chamber} is a stability chamber $\cC$ such that $\cC\subset (K)^{<0}$. 
\end{defn}

\begin{rmk}
We note that a chamber $\cC$ satisfies either $\cC\subset (K)^{<0}$ or $\cC\subset (K)^{>0}$. 
Indeed, the ample cone $\Amp(S)$ intersects the half space $K^{<0}$ (see Remark \ref{rmk:emptyM_L}), and 
$K^{\perp}$ is a wall if and only if $\Amp(S)\nsubseteq K^{\leq 0}$, i.e., 
$-K$ is not pseudoeffective. 
If $-K$ is pseudoeffective, then $\Amp(S)\subseteq K^{<0}$ and all chambers are $K$-negative.
\end{rmk}

For any $K$-negative chamber $\cC$, if the moduli space $M_{\cC}$ is nonempty, then it is
smooth of pure dimension $n$ by Lemma \ref{smoothness}.
We will see in Corollary~\ref{cor:irreducibility} that it is moreover irreducible and rational, and all torsion-free sheaves parametrized by $M_{\cC}$ are in fact locally free.

\begin{lemma} \label{good_walls}
Let $(2D+K)^{\perp}$ be a $K$-negative wall. Then
\begin{enumerate}
    \item $D\cdot (D+K) = -2$, and
    \item $-n\leq D^2 \leq 1$ (equivalently, $-n+2\leq -K\cdot D\leq 3$).
\end{enumerate}
\end{lemma}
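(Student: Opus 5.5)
Set $x:=2D+K$. Since the wall is $K$-negative, there is an ample $L$ with $x\cdot L=0$ and $K\cdot L<0$. By the Hodge index theorem, $x\cdot L=0$ with $L^2>0$ forces $x^2\leq 0$, with equality only if $x\equiv 0$. The class $x$ cannot be zero: otherwise $K=-2D$, so $K^2=4D^2$. But $K^2=9-k=5-n$, and $K=-2D$ would give $-3=K\cdot h=-2D\cdot h$, which is impossible since $D\cdot h\in\ZZ$. Hence $x^2<0$.

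Expanding, $x^2=4D^2+4D\cdot K+K^2=4D\cdot(D+K)+K^2$. From condition (1) of Definition~\ref{def:wall} we have $D\cdot(D+K)\geq -2$, so it suffices to exclude $D\cdot(D+K)\geq -1$. Since $D\cdot(D+K)=D^2+D\cdot K$ is even by adjunction/Riemann--Roch (it equals $2\chi(\cO(D))-2$), the only alternative to $-2$ is $D\cdot(D+K)\geq 0$. In that case $x^2\geq K^2=5-n$, which is not yet a contradiction for $n\geq 5$. So Hodge index alone is not enough, and I will also use $K\cdot L<0$.

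Here is the plan for that step. Apply Hodge index to the pair $L$, $K$ restricted to $L^\perp$. Write $K=\frac{K\cdot L}{L^2}L+K'$ and $x=2D+K$ with $x\in L^\perp$. Then $2D=x-K$, so $D=\frac12(x-K)$ and
\[
D\cdot(D+K)=\tfrac14(x-K)\cdot(x+K)=\tfrac14(x^2-K^2).
\]
Thus $D\cdot(D+K)=-2$ is equivalent to $x^2=K^2-8=-3-n$, and the task is to show $x^2\leq K^2-8$.

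An alternative is to argue via Riemann--Roch with sheaves. If $D\cdot(D+K)\geq 0$, then $\chi(\cO(2D+K))=\frac12 x\cdot(x-K)+1=\frac12\big(x^2-x\cdot K\big)+1$. One could try to show that $x$ or $-x$ is effective, which contradicts $x\cdot L=0$ with $L$ ample (since $x\neq0$). We have $\chi(x)+\chi(-x)=x^2+2$ (the $x\cdot K$ terms cancel), and $h^2(\pm x)=h^0(K\mp x)$. Now $(K\mp x)\cdot L=K\cdot L<0$, so $K\mp x$ is not effective. Hence $h^0(x)+h^0(-x)\geq x^2+2$. With $x^2=4D(D+K)+K^2$ and $D(D+K)\geq 0$, this gives $x^2+2\geq 7-n$, which is positive only for small $n$. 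So this also fails in general. I would refine it by using $\chi(\cO(D))$ and $\chi(\cO(-K-D))$ directly. We have $\chi(D)=\frac12 D(D-K)+1$, $\chi(-K-D)=\chi(K+D)$ by Serre duality, and $=\frac12(D+K)\cdot D+1$. The $h^2$ terms vanish because $(K-D)\cdot L$ and $(2K+D)\cdot L$ are negative: since $D\cdot L=-\frac12K\cdot L>0$, we get $(K-D)\cdot L=\frac32K\cdot L<0$ and $(2K+D)\cdot L=\frac32 K\cdot L<0$. Also $\chi(D)=\chi(-K-D)+D\cdot(-K)$. If $D(D+K)\geq0$, then $\chi(-K-D)\geq1$, so $-K-D$ is effective, and $D$ is effective when $D\cdot(-K)\geq 0$. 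I expect this effectivity bookkeeping to be the main obstacle. The goal is to combine effectivity of $D$ and $-K-D$ with Lemma~\ref{E positive surface case}(2) to force a contradiction with $x\cdot L=0$.

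For (2), once $D\cdot(D+K)=-2$ holds, we have $-K\cdot D=D^2+2$, and the two stated ranges are equivalent. From $x^2<0$ we get $x^2=-3-n$, consistent. For the bound $D^2\leq1$, use Hodge index on $D$ and $L$: $D\cdot L=-\frac12 K\cdot L$, and $(-K-D)\cdot L=D\cdot L$. Then $\chi(D)=\frac12 D(D-K)+1=D^2+2\geq 1$ forces $D$ effective, and symmetrically $-K-D$ is effective whenever $\chi(-K-D)=1-D\cdot K\cdot 0\ldots$. The upper bound $D^2\leq 1$ should come from Hodge index applied to $D$ and $-K-D$, whose difference $2D+K$ lies in $L^\perp$, which bounds $D^2$ in terms of $(D\cdot L)^2/L^2$ versus $K$. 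The lower bound $D^2\geq -n$ should come from $\chi(-K-D)=1+\frac12\big((D+K)^2-(D+K)\cdot K\big)$, which must be $\geq 0$ or be forced by effectivity. I would finish by plugging in $D^2=-K\cdot D-2$.
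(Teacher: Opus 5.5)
Neither part is actually proved. In (1) the decisive step is deferred to ``effectivity bookkeeping'', and the route you sketch for it does not work. First, Serre duality gives $\chi(-K-D)=\chi(2K+D)$, not $\chi(K+D)$; for $D=h$ one has $\chi(-K-h)=2-n$, while your expression gives $0$. So the conclusion that $-K-D$ is effective when $D\cdot(D+K)\geq 0$ is unfounded. Second, even granting it, effectivity of $D$ and $-K-D$ is not in tension with $(2D+K)\cdot L=0$: both classes have $L$-degree $-\frac12 K\cdot L>0$. Both are effective on genuine $K$-negative walls, e.g.\ $n=2$, $D=h-e_1$, $-K-D=2h-e_2-\dots-e_6$.

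The missing idea is to use the classes of \emph{negative} $L$-degree. Since $(-D)\cdot L<0$ and $(K+D)\cdot L<0$, one has $h^0(-D)=0$ and $h^2(-D)=h^0(K+D)=0$. Hence $1+\frac12 D\cdot(D+K)=\chi(-D)=-h^1(-D)\leq 0$, and together with condition~(1) of Definition~\ref{def:wall} this forces $D\cdot(D+K)=-2$. This is the paper's argument. Your expression $\frac12(D+K)\cdot D+1$ is exactly $\chi(K+D)=\chi(-D)$, and the correct form of your parity identity is $D\cdot(D+K)=2\chi(\cO(-D))-2$. So you were one observation away, and the Hodge index and parity detours are unnecessary.

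For (2), Hodge index cannot give the bounds, because it only sees $L^2>0$, while ampleness of $L$ is essential. Take $D=L=2h-e_1-e_2$, which is nef but not ample. Then $D\cdot(D+K)=-2$, $(2D+K)\cdot L=0$, $K\cdot L=-4<0$ and $L^2=2>0$, yet $D^2=2$. Here $2D+K=h-e_1-e_2+e_3+\dots+e_k$ is effective, which is exactly what ampleness excludes.

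The correct argument uses the vanishings you already wrote down in your second attempt but then only summed:
\begin{itemize}
\item $h^0(\pm(2D+K))=0$, because $2D+K\neq 0$ has $L$-degree $0$;
\item $h^2(2D+K)=h^0(-2D)=0$ and $h^2(-2D-K)=h^0(2D+2K)=0$, because these classes have $L$-degree $K\cdot L<0$.
\end{itemize}
Hence $\chi(2D+K)\leq 0$ and $\chi(-2D-K)\leq 0$ \emph{separately}. Riemann--Roch with $D\cdot K=-2-D^2$ and $K^2=5-n$ gives $\chi(2D+K)=D^2-1$ and $\chi(-2D-K)=-D^2-n$, i.e.\ $-n\leq D^2\leq 1$.
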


\bp
Let $L\in (2D+K)^{\perp}$ be a polarization such that $K \cdot L < 0$. Then 
$$
D\cdot L=(-K-D)\cdot L=\frac{-K\cdot L}{2}>0.
$$
Since $L$ is ample, this implies that 
$$\h^0(-D)=0 \ \text{ and } \ \h^2(-D)=\h^0(K+D)=0,$$
and therefore 
$$\chi(-D)=-\h^1(-D)\leq0.$$
On the other hand, by Riemann-Roch and condition~(1) of Definition~\ref{def:wall}, we must have
$$\chi(-D)=1+ \frac{1}{2}D\cdot(D+K)\geq 0.$$
This shows that $\chi(-D)=0$, which gives (1).

To show (2), we analyze $\chi(2D+K)$ and $\chi(-2D-K)$. 
There are polarizations $L^+$ and $L^-$ such that $(2D+K)\cdot L^+>0$ and $(2D+K)\cdot L^-<0$.
Hence, we have $H^0(2D+K)=0=H^0(-2D-K)$. 
By Serre duality and the fact that $D\cdot L=(-K-D)\cdot L>0$, we have 
$$
\HH^2(2D+K)=\HH^0(-2D)=0\ , \ \text{ and}
$$
$$
\HH^2(-2D-K)=\HH^0(2D+2K)=0.
$$
Finally, by Riemann-Roch, we have: 
\begin{align}\label{dim_PD}
&0\geq -\h^1(2D+K)=\chi(2D+K)=D^2-1 \quad \text{and} \\
&0\geq -\h^1(-2D-K)=\chi(-2D-K)=-D^2-n, \notag
\end{align}
which gives (2).
\ep

The $K$-negative walls belong to the class of \emph{good walls} studied in \cite{Ellingsrud_Gottsche}.
When the polarization $L$ crosses a good wall, the variation of the moduli space $M_L$ can be described precisely. 
The following is \cite[Proposition 2.7(5)-(6)]{Ellingsrud_Gottsche} in our setting (\ref{data}).

\begin{prop}\label{description_of_walls}
Let $\cC^+$ and $\cC^-$ be neighbouring chambers separated by a $K$-negative wall $(2D+K)^{\perp}$, with
$\cC^-\subset (2D+K)^{<0}$ and $\cC^+\subset (2D+K)^{>0}$. 
    \begin{enumerate}
\item The torsion-free sheaves with (\ref{data}) that are $\cC^-$-stable but not $\cC^+$-stable 
are precisely all the locally free sheaves $E$ that fit into a non-split exact sequence of the form 
$$
0\ra \cO(D)\ra E\ra \cO(-K-D)\ra0.
$$    
\item The torsion-free sheaves with (\ref{data}) that are $\cC^+$-stable but not $\cC^-$-stable 
are precisely the locally free sheaves $E'$ that fit into a non-split exact sequence of the form 
$$
0\ra \cO(-K-D)\ra E'\ra \cO(D)\ra0.
$$    
\end{enumerate}
\end{prop}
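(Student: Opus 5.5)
The plan is to deduce the statement from the general wall-crossing result of Ellingsrud--G\"ottsche, \cite[Proposition 2.7(5)--(6)]{Ellingsrud_Gottsche}, using Lemma~\ref{good_walls} to kill the zero-dimensional subschemes. Then I check directly that the extensions described are exactly the sheaves whose stability changes.

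\textbf{Step 1: form of the destabilized sheaves.} By \cite[Proposition 2.7(6)]{Ellingsrud_Gottsche}, any torsion-free $E$ with (\ref{data}) that is $\cC^-$-stable but not $\cC^+$-stable sits in a sequence
\[
0\ra \cO(D)\otimes\cI_{Z_1}\ra E\ra \cO(-K-D)\otimes\cI_{Z_2}\ra0,
\]
with $\length(Z_1)+\length(Z_2)=2+D\cdot(D+K)$. Lemma~\ref{good_walls}(1) gives $D\cdot(D+K)=-2$, so $Z_1=Z_2=\emptyset$. Hence $E$ is an extension of line bundles, and in particular it is locally free.

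\textbf{Step 2: the sequence is non-split.} A split $E=\cO(D)\oplus\cO(-K-D)$ is not $\cC^-$-stable, because for $L\in\cC^-$ the summand $\cO(-K-D)$ has slope $(-K-D)\cdot L>\frac{-K\cdot L}{2}$. Such extensions do exist: $\Ext^1(\cO(-K-D),\cO(D))=\HH^1(2D+K)$ has dimension $1-D^2\ge 0$ by (\ref{dim_PD}).

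\textbf{Step 3: the converse.} Take a non-split extension $0\ra\cO(D)\ra E\ra\cO(-K-D)\ra0$ and $L\in\cC^-$. Then $\mu_L(\cO(D))<\mu_L(E)$.
\begin{itemize}
\item Not $\cC^+$-stable: for $L\in\cC^+$ we have $D\cdot L>\mu_L(E)$, so $\cO(D)$ destabilizes $E$.
\item $\cC^-$-stable: let $\cO(G)\otimes\cI_W\subset E$ be a saturated rank-one subsheaf. If it maps to zero in $\cO(-K-D)$, then it lies in $\cO(D)$ and has slope at most $D\cdot L<\mu_L(E)$. Otherwise $-K-D-G$ is effective.
\begin{itemize}
\item If $G=-K-D$, the nonzero map $\cO(-K-D)\otimes\cI_W\ra\cO(-K-D)$ would split the extension. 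This uses that $\Ext^1$ is unchanged when restricting away from codimension two, or equivalently $\Hom(\cI_W,\cO)=\cO$ on reflexive hulls.
\item If $G\ne -K-D$, then $-K-D-G$ is a nonzero effective divisor.
\end{itemize}
\end{itemize}

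\textbf{Main obstacle.} The hard case is the last one: $G\ne -K-D$ with $-K-D-G$ nonzero effective. We must show $G\cdot L<\mu_L(E)$ for $L\in\cC^-$. The idea is that any such $G$ with $G\cdot L\ge\mu_L(E)$ would define a wall $(2G+K)^\perp$ meeting $\cC^-$ or passing between $\cC^-$ and the wall. This contradicts $\cC^-$ being a chamber that borders $(2D+K)^\perp$.

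Concretely, I would work with $L_0$ in the relative interior of the common face of the two chambers, where $\mu_{L_0}(E)=D\cdot L_0$. Since $G$ is a proper subdivisor-type class of $-K-D$, we get $G\cdot L_0<(-K-D)\cdot L_0=\mu_{L_0}(E)$. This inequality persists for $L\in\cC^-$ near $L_0$, and it cannot change sign inside the chamber without crossing a wall. To avoid the delicate general argument here, I would rely on \cite[Proposition 2.7(5)]{Ellingsrud_Gottsche}, which states exactly that all non-split extensions with $Z_i=\emptyset$ across a good wall are $\cC^-$-stable.

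\textbf{Part (2).} This is symmetric: exchange $D$ and $-K-D$, which replaces $2D+K$ by $-(2D+K)$ and swaps $\cC^+$ and $\cC^-$.
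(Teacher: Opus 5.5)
Your proposal is correct and follows the paper's route: the paper simply invokes \cite[Proposition 2.7(5)--(6)]{Ellingsrud_Gottsche} and uses Lemma~\ref{good_walls}(1) to force $Z_1=Z_2=\emptyset$, which gives local freeness, and part (2) follows by the symmetry $D\leftrightarrow -K-D$. Your direct check in Step 3 in fact closes without the citation: write the quotient of $E$ by the saturated subsheaf $\cO(G)\otimes\cI_W$ as $\cO(-K-G)\otimes\cI_{W'}$; then $\text{c}_2(E)=2$ gives $G\cdot(G+K)=\length(W)+\length(W')-2\geq -2$, so a point of $\cC^-$ on the segment towards $L_0$ where $G\cdot L=\mu_L(E)$ would lie on the wall $(2G+K)^\perp$, which is impossible inside a chamber.
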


\begin{rmk}\label{unique_extension}
Suppose that a torsion-free sheaf $E$ with (\ref{data}) fits into the exact sequence described in  Proposition~\ref{description_of_walls}(1). 
By \cite[Proposition 2.7(4)]{Ellingsrud_Gottsche}, this is the Harder-Narasimhan filtration of $E$ with respect to 
any $L\in \cC^+$. In particular, the subsheaf $\cO(D)\subset E$ is the unique subsheaf of $E$ that $\cC^+$-destabilizes it.
\end{rmk}

Proposition~\ref{description_of_walls} leads us to consider the projective spaces parametrizing the non-split extensions described above. The dimensions of these projective spaces can be computed from (\ref{dim_PD}).

\begin{defn} \label{defn:special_loci}\label{some special walls}
Let $(2D+K)^{\perp}$ be a $K$-negative wall.
We set
$$\PP_D:=\PP\Ext^1\big(\cO(-K-D),\cO(D)\big)=\PP\HH^1(2D+K)\cong \PP^{-D^2},$$ %\subset M_{\cC^-}  
$$\PP_{-K-D}:=\PP\Ext^1\big(\cO(D),\cO(-K-D)\big)=\PP\HH^1(-2D-K)\cong \PP^{n+D^2-1}. $$ %\subset M_{\cC^+}
(Here we put $\PP^{-1}=\emptyset$ and $\dim(\emptyset)=-1$ by convention.)
Note that 
$$
\dim(\PP_D)+\dim(\PP_{-K-D})=n-1.
$$
In particular:
\bi
\item  If $D^2=1$, then 
$$\PP_D=\emptyset,\quad \PP_{-K-D}=\PP^{n}.$$
\item  If $D^2=0$, then 
$$\PP_D=pt,\quad \PP_{-K-D}=\PP^{n-1}.$$
\item If $D^2=-1$, then 
$$\PP_D=\PP^1,\quad \PP_{-K-D}=\PP^{n-2}.$$
\ei
\end{defn}

The following theorem describes how the moduli spaces $M_L$ vary when the polarization $L$ crosses a  $K$-negative wall. 
It is a strengthening of Proposition~\ref{description_of_walls} and a special instance of \cite[Proposition 4.4 and Theorem 5.3]{Ellingsrud_Gottsche} in our setting (\ref{data}).

\begin{thm}\label{prop:special_loci}
Let $(2D+K)^{\perp}$ be a $K$-negative wall separating two neighbouring chambers $\cC^-\subset (2D+K)^{<0}$ and $\cC^+\subset (2D+K)^{>0}$. 
\bi
\item If $D^2=1$, then 
$$\PP_D=M_{\cC^-}=\emptyset,\quad \PP_{-K-D}=M_{\cC^+}=\PP^{n}.$$
\item If $D^2=-n$, then 
$$\PP_D=M_{\cC^-}=\PP^{n},\quad \PP_{-K-D}=M_{\cC^+}=\emptyset.$$
\item If $-n< D^2 < 1$, then the following holds.
\begin{enumerate}
    \item There are closed embeddings 
    $$
    \PP_D \hookrightarrow M_{\cC^-},\quad  \PP_{-K-D} \hookrightarrow M_{\cC^+}.
    $$
    \item The normal bundles of these special loci are 
    $$
    N_{\PP_D/M_{\cC^-}}\cong \cO_{\PP_D}(-1)^{\oplus q_D}, \quad  N_{\PP_{-K-D}/M_{\cC^+}}\cong \cO_{\PP_{-K-D}}(-1)^{\oplus q_{-K-D}},
    $$
    where $q_D$ denotes the codimension of $\PP_D$ in $M_{\cC^-}$, and $q_{-K-D}$ denotes the codimension of $\PP_{-K-D}$ in $M_{\cC^+}$.
    \item There is an isomorphism 
    $$
    \Bl_{\PP_D}M_{\cC^-} \ \cong \ \Bl_{\PP_{-K-D}}M_{\cC^+}.
    $$
    In particular, if $D^2=0$, then $M_{\cC^+}$ is the blowup of $M_{\cC^-}$ at the point $\PP_{D}$, with exceptional divisor $\PP_{-K-D}$, and if $D^2=-n+1$, then $M_{\cC^-}$ is the blowup of $M_{\cC^+}$ at the point $\PP_{-K-D}$, with exceptional divisor $\PP_{D}$.
\end{enumerate}
\ei
\end{thm}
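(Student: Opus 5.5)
The plan is to reduce the statement to the general wall-crossing results of Ellingsrud–Göttsche (Proposition 4.4 and Theorem 5.3 of \cite{Ellingsrud_Gottsche}), checking that their hypotheses hold in our setting (\ref{data}). The key input is Lemma~\ref{good_walls}. It gives $D\cdot(D+K)=-2$, so the sheaves that change stability have $Z_1=Z_2=\emptyset$. Its proof also gives the cohomology vanishing $\h^0(\pm(2D+K))=0$ and $\h^2(\pm(2D+K))=0$. Hence the wall is \emph{good} in the sense of Ellingsrud–Göttsche, the loci $E^{\pm}$ of Proposition~\ref{description_of_walls} are exactly the projective spaces $\PP_D$ and $\PP_{-K-D}$ of Definition~\ref{defn:special_loci}, and their dimensions are $-D^2$ and $n+D^2-1$ by (\ref{dim_PD}).

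I would treat the extreme cases first. If $D^2=1$, then $\PP_D=\emptyset$, so no sheaf is $\cC^-$-stable but not $\cC^+$-stable, while $\PP_{-K-D}\cong\PP^n$. I would then argue that $M_{\cC^-}=\emptyset$. A $\cC^-$-stable $F$ is $\mu_L$-semistable for $L$ on the wall, and it stays $\cC^+$-stable since $\PP_D=\emptyset$. Moreover $D$ satisfies $D^2=1$ and $-K\cdot D=3$, so it plays the role of $h'$ in Lemma~\ref{sstable vb}. That lemma gives $(2D+K)\cdot L\geq0$ for every $K$-negative $L$ with $M^{\mu ss}_L\neq\emptyset$, which fails for $L\in\cC^-$. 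So $M_{\cC^-}=\emptyset$. Since no sheaves are lost when crossing into $\cC^+$, we get $M_{\cC^+}=\PP_{-K-D}$, embedded by the universal extension. The case $D^2=-n$ is symmetric: replace $D$ by $-K-D$, which satisfies $(-K-D)^2=K^2+2K\cdot D+D^2=1$ using $K^2=9-k=5-n$ and $K\cdot D=-D^2-2$.

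For the middle range $-n<D^2<1$, the closed embeddings $\PP_D\hookrightarrow M_{\cC^-}$ come from the universal extension on $\PP_D\times S$. Non-split extensions are $\cC^-$-stable, and by Remark~\ref{unique_extension} the destabilizing subsheaf is unique, so the induced morphism is injective. Checking the tangent map gives an embedding. The normal bundle is computed as in \cite[Proposition 4.4]{Ellingsrud_Gottsche}: the normal space at an extension is $\Ext^1(\cO(D),\cO(-K-D))$ twisted by the tautological bundle. This gives $\cO(-1)^{\oplus q_D}$ with $q_D=\dim\PP_{-K-D}+1$, using that $\Ext^1(\cO(D),\cO(-K-D))=\HH^1(-2D-K)$ has dimension $n+D^2$ because $\h^0=\h^2=0$.

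Finally, I would obtain $\Bl_{\PP_D}M_{\cC^-}\cong\Bl_{\PP_{-K-D}}M_{\cC^+}$ from the elementary-modification construction of \cite[Theorem 5.3]{Ellingsrud_Gottsche}. On the blowup of $M_{\cC^-}$ along $\PP_D$, modify the universal family along the exceptional divisor $\PP_D\times\PP_{-K-D}$. This yields a flat family of $\cC^+$-stable sheaves, hence a morphism to $M_{\cC^+}$, and it identifies the blowup as a standard flip/flop-type diagram. The special cases $D^2=0$ and $D^2=-n+1$ are the ones where one centre is a point and the other is an exceptional divisor, since blowing up a divisor does nothing.

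The main obstacle is making this last step rigorous. It requires the existence of (quasi-)universal families, which Remark~\ref{universal} supplies in the chambers, and smoothness of $M_{\cC^\pm}$ along the loci, which comes from Lemma~\ref{smoothness} because every point is stable. It also requires that every point of $M_{\cC^\pm}$ is locally free, so that the Ellingsrud–Göttsche hypotheses hold verbatim.
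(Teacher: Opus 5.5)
Your proposal follows the paper's proof: $D^2=1$ via Lemma~\ref{sstable vb} with $h'=D$ and then Proposition~\ref{description_of_walls}(2), $D^2=-n$ by the symmetry $D\leftrightarrow -K-D$, and the range $-n<D^2<1$ by appeal to \cite[Proposition 4.4 and Theorem 5.3]{Ellingsrud_Gottsche}, of which you give a faithful sketch. The one hypothesis you leave unchecked is that Lemma~\ref{sstable vb} requires $h'=D$ to be effective; this holds because $\chi(D)=3$ and $\HH^2(D)=\HH^0(K-D)=0$, since $(K-D)\cdot L_0=\tfrac{3}{2}K\cdot L_0<0$ for a polarization $L_0$ on the wall with $K\cdot L_0<0$.
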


\bp
Assume that $D^2=1$. Then $D\cdot K=-3$ and $\chi(D)=3$. As $(2D+K)\cdot L_0=0$, we have that $(K-D)\cdot L_0=\frac{K\cdot L_0}{2}<0$. It follows that $\HH^0(K-D)=0$, hence $\HH^2(D)=0$ by Serre duality. As $\chi(D)>0$ it follows that $D$ is an effective divisor class and one can apply Lemma \ref{sstable vb} with $h'=D$ to obtain that
$M_{\cC^-}=\emptyset$.  Any $\cC_{+}$-stable torsion free sheaf $E$ is therefore not $\cC_{-}$-stable, hence by Proposition \ref{description_of_walls}(2) we have that $[E]\in\PP_{-K-D}$. It follows that $M_{\cC_{+}}=\PP_{-K-D}$. The case $D^2=-n$ is analogous. The rest of the proposition follows from   \cite[Proposition 4.4 and Theorem 5.3]{Ellingsrud_Gottsche}. 
\ep

\begin{rmk}\label{unique_extension2}
Suppose that $(2D+K)^{\perp}$ and $(2D'+K)^{\perp}$ are distinct $K$-negative walls bounding a stability chamber $\cC$.
By replacing $D$ with $-K-D$ if necessary, and similarly for $D'$, we may assume that $\cC\subset (2D+K)^{<0}\cap (2D'+K)^{<0}$, and thus $\PP_D,\PP_{D'}\subset M_{\cC}$. 
By Remark~\ref{unique_extension}, $\PP_D\cap \PP_{D'}=\emptyset$.
\end{rmk}

\begin{cor}\label{cor:irreducibility}
Let $\cC\subset \Amp(S)$ be a $K$-negative stability chamber such that $M_{\cC}\neq \emptyset$.
Then $M_{\cC}$ is an irreducible and rational smooth projective variety of dimension $n$.
Moreover, every torsion-free sheaf parametrized by $M_{\cC}$ is locally free. 
\end{cor}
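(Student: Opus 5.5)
Because the $K$-negative part of $\Amp(S)$ is connected and every two $K$-negative chambers are joined by a finite sequence of $K$-negative walls, I will argue by induction along such a path, starting from a chamber where $M_{\cC}$ is empty or a projective space. Smoothness and pure dimension $n$ are already given by Lemma~\ref{smoothness}, since for $L\in\cC$ every semistable sheaf is stable (Proposition~\ref{prop:wall&chamber}). Projectivity holds because $M_L$ is a Gieseker moduli space. What remains to show is irreducibility, rationality, and local freeness.

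\emph{Starting point.} Take $L=dh-\sum_i e_i$ (or a small perturbation of it), with $d\gg0$; it is ample, lies in $(K<0)$, and satisfies $M^{\mu ss}_L=\emptyset$ by Lemma~\ref{sstable vb} with $h'=h$ (compare Remark~\ref{rmk:emptyM_L}). Let $\cC_0$ be its chamber, so $M_{\cC_0}=\emptyset$. The set $\Amp(S)\cap(K<0)$ is a convex open cone and the walls are locally finite there. For a given $K$-negative chamber $\cC$, I choose $L_1\in\cC$ and perturb the segment from $L$ to $L_1$ so that it stays inside $(K<0)\cap\Amp(S)$ and meets walls only transversally, one at a time, at points lying on no other wall. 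Each wall it crosses is then $K$-negative and separates two neighbouring chambers.

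\emph{Induction step.} Suppose the segment crosses a wall $(2D+K)^\perp$, going from $\cC^-$ to $\cC^+$ (after swapping $D$ with $-K-D$ if needed). Let the claim hold for the chamber we come from, in the form: $M$ is empty or an irreducible rational smooth projective variety of dimension $n$ all of whose points are locally free. Theorem~\ref{prop:special_loci} describes the new space. If $D^2=1$ or $D^2=-n$, the new space is $\PP^n$ or empty. If $-n<D^2<1$, the new space is obtained from the old one by blowing up $\PP_D$ and contracting the exceptional divisor onto $\PP_{-K-D}$ (or the reverse), since $\Bl_{\PP_D}M_{\cC^-}\cong\Bl_{\PP_{-K-D}}M_{\cC^+}$. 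One subtlety is that $M_{\cC^-}$ could be empty while $M_{\cC^+}$ is not when $-n<D^2<1$. This cannot happen: $\PP_D\hookrightarrow M_{\cC^-}$ and $\dim\PP_D=-D^2\geq0$ when $D^2\le0$. If $D^2$ is between $-n+1$ and $0$, both special loci are nonempty, so both spaces are nonempty. An irreducible variety stays irreducible under blowing up a smooth center and under contracting via the isomorphism of blowups, and rationality is likewise preserved because both spaces are birational to the common blowup. For local freeness, the sheaves in $M_{\cC^+}\setminus\PP_{-K-D}$ are identified with points of $M_{\cC^-}\setminus\PP_D$, which are locally free by induction. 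The sheaves in $\PP_{-K-D}$ are locally free by Proposition~\ref{description_of_walls}, since they are extensions of line bundles.

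\emph{Main obstacle.} The delicate step is making sure irreducibility is not lost. We must know that $\Bl_{\PP_{-K-D}}M_{\cC^+}$ being irreducible forces $M_{\cC^+}$ to be irreducible, and that $\PP_{-K-D}$ is not itself a whole connected component. This is ruled out because the normal bundle of $\PP_{-K-D}$ has rank $q_{-K-D}\ge1$ when $D^2>-n+1$. In the remaining case $D^2=-n+1$, $\PP_{-K-D}$ is a point with normal bundle $\cO(-1)^{\oplus n}$, so it is not an isolated component either. A second point to check is that the path can be chosen entirely through $K$-negative chambers. This follows from the convexity of $(K<0)\cap\Amp(S)$ together with the local finiteness of walls in $\Amp(S)$.
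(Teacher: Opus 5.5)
Your proof is correct and follows the paper's argument. You take a generic segment inside $\Amp(S)\cap(K<0)$ from an empty chamber (empty by Lemma~\ref{sstable vb} with $h'=h$) to $\cC$, and induct across the walls it meets, using Theorem~\ref{prop:special_loci} for irreducibility and rationality and Proposition~\ref{description_of_walls} for local freeness. Your explicit dimension check fills in a step the paper leaves implicit. Since $\PP_{-K-D}$ has dimension less than $n$ in the pure $n$-dimensional $M_{\cC^+}$, the common open set $M_{\cC^-}\setminus\PP_D\cong M_{\cC^+}\setminus\PP_{-K-D}$ is dense in $M_{\cC^+}$. You also avoid the paper's unneeded assumption that the first wall crossed is $(2h+K)^\perp$.
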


\bp
The fact that $M_{\cC}$ is smooth of pure dimension $n$ follows from Lemma~\ref{smoothness}.
By Lemma~\ref{sstable vb}, $\cC\subset (2h+K)^{>0}$ while, 
by Remark~\ref{rmk:emptyM_L}, there is a $K$-negative chamber $\cC_0\subset (2h+K)^{<0}$, so that $M_{\cC_0}= \emptyset$. 
We may take $\cC_0$ bounded by the wall $(2h+K)^{\perp}$. 
Let $L_0\in \cC_0$ and $L_1\in \cC$ be general polarizations, so that the line segment $\{L_t\,|\, 0\leq t\leq1\}\subset \Amp(S)$ joining 
$L_0$ and $L_1$ crosses finitely many $K$-negative walls $(2D_i+K)^{\perp}$ at distinct points $L_{t_i}$, $0<t_1<\dots <t_m<1$. 
We denote by $\cC_i$ the chamber containing $L_t$ for $t_i<t<t_{i+1}$, set $\cC_{m+1}=\cC$, and choose $D_i$ so that 
$\cC_i\subset (2D_i+K)^{<0}$ and $\cC_{i+1}\subset (2D_i+K)^{>0}$. 
In particular, we take $D_1=h$, and so $M_{\cC_1}=\PP^{n}$ by Theorem~\ref{prop:special_loci}.
We then apply Theorem~\ref{prop:special_loci} and increasing induction on $i$, to conclude that $M_{\cC_i}$
is irreducible and birational to $M_{\cC_1}=\PP^{n}$ for every $i\in\{1, \dots, m+1\}$. 
Moreover, by Proposition~\ref{description_of_walls} and increasing induction on $i$, we see that 
every torsion-free sheaf parametrized by $M_{\cC_i}$ is locally free. 
\ep

We will also need to understand the moduli spaces $M_{L_0}$ and $M^{\mu ss}_{L_0}$ for polarizations $L_0$ lying on a 
$K$-negative wall $(2D+K)^{\perp}$ separating two neighbouring chambers $\cC^-$ and $\cC^+$, 
and compare these moduli spaces with $M_{\cC^-}$ and $M_{\cC^+}$.
We will be interested in the cases when both $M_{\cC^-}$ and $M_{\cC^+}$ are nonempty. 
This is equivalent to assuming that $-n< D^2 < 1$.

\begin{prop}\label{reduction map}
Let $(2D+K)^{\perp}$ be a $K$-negative wall separating two neighbouring $K$-negative chambers 
$\cC^-\subset (2D+K)^{<0}$ and $\cC^+\subset (2D+K)^{>0}$. 
Let $L_0\in (2D+K)^{\perp}$ be a polarization lying in the closures of the chambers $\cC^-$ and $\cC^+$, and not
lying in any other wall. Then the following holds. 
\begin{enumerate}
    \item Suppose that $-n< D^2 < 1$, i.e., both $M_{\cC_{-}}$ and $M_{\cC_{+}}$ are non-empty. 
    Then there are morphisms 
$$
\phi_-: M_{\cC^-}\ra M^{\mu ss}_{L_0}, \quad \phi_+: M_{\cC^+}\ra M^{\mu ss}_{L_0}
$$
such that 
$$\phi_-(\PP_{D})=\phi_+(\PP_{-K-D})=pt=M_{L_0}^{\mu ss}\setminus M_{L_0}^{\mu s}.$$

    \item 
    \bi
    \item If $D^2<\frac{1-n}{2}$, then $M_{\cC^-}\cong M_{L_0}=M^s_{L_0}$.
    %\item If $D^2=\frac{1-n}{2}$, then $M_{L_0}= M^{\mu ss}_{L_0}$
    \item If $D^2>\frac{1-n}{2}$, then $M_{\cC^+}\cong M_{L_0}=M^s_{L_0}$.
    \item If $n$ is odd and $D^2=\frac{1-n}{2}$, then there are morphisms 
   $$\psi_-: M_{\cC^-}\ra M_{L_0}, \quad \psi_+: M_{\cC^+}\ra M_{L_0}, \quad f: M_{L_0}\ra  M^{\mu ss}_{L_0},$$
such that $\phi_{-}=f\circ\psi_{-}$, $\phi_+=f\circ\psi_+$ and 
$$\psi_-(\PP_{D})=\psi_+(\PP_{-K-D})=pt\in M_{L_0}\setminus M^s_{L_0}.$$
The maps $\psi_{-}$ and $\psi_+$ are isomorphisms over $M^s_{L_0}$. 
\ei
\end{enumerate}
\end{prop}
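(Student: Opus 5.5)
We first establish the structure of $M^{\mu ss}_{L_0}$ and $M_{L_0}$ at a polarization $L_0$ lying on the single wall $(2D+K)^{\perp}$. Since $L_0$ lies on no other wall, a sheaf $F$ with (\ref{data}) that is $\mu_{L_0}$-semistable but not $\mu_{L_0}$-stable has a rank $1$ subsheaf of slope exactly $-K\cdot L_0/2$. Running the argument of Lemma~\ref{good_walls} (with $D$ replaced by $c_1$ of the saturated subsheaf), we see that the only such classes are $D$ and $-K-D$, and that the quotient is torsion free with no zero-dimensional part. Consequently every strictly $\mu_{L_0}$-semistable $F$ has Jordan--H\"older factors $\cO(D)$ and $\cO(-K-D)$. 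In $M^{\mu ss}_{L_0}$, whose points are classes of $\mu$-semistable sheaves with the graded object modulo the identification of double duals, all of these sheaves collapse to the single point $[\cO(D)\oplus\cO(-K-D)]$. When $-n<D^2<1$, both extension spaces $\PP_D$ and $\PP_{-K-D}$ are nonempty, so the strictly semistable locus is this one point.

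Next we construct $\phi_\pm$. Any $\cC^-$-stable sheaf is locally free (Corollary~\ref{cor:irreducibility}) and $\mu_{L_0}$-semistable, by closedness of semistability as the polarization moves to the boundary of $\cC^-$. Using the universal family on $M_{\cC^-}$ (Remark~\ref{universal}) together with the universal property of $M^{\mu ss}_{L_0}$ as in \cite[Def.~8.2.7]{HL}, which corepresents families of $\mu$-semistable sheaves, we obtain a morphism $\phi_-$. By Remark~\ref{unique_extension} and Proposition~\ref{description_of_walls}, the sheaves in $M_{\cC^-}$ that fail to be $\mu_{L_0}$-stable are exactly those in $\PP_D$. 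They are extensions of $\cO(-K-D)$ by $\cO(D)$, so they map to the point above. The construction of $\phi_+$ is symmetric, which gives (1).

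For (2) we compare Gieseker stability at $L_0$ using Remark~\ref{rmk:stability}. For a subsheaf $\cO(D)$ of slope equal to $\mu_{L_0}(F)$, Gieseker stability is decided by comparing $\chi(\cO(D))=1+\tfrac12(D^2-K\cdot D)=1+\tfrac12(2D^2+2)=D^2+2$ with $\chi(F)/2$. By (\ref{HRR1}) we have $\chi(F)=\ch_2+\tfrac12K^2+2$, with $K^2=9-k=5-n$ and $\ch_2=\tfrac12K^2-2$, so $\chi(F)/2=(5-n)/2$. The condition $D^2+2<(5-n)/2$ is equivalent to $D^2<\tfrac{1-n}{2}$.

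In the case $D^2<\tfrac{1-n}{2}$, the nonsplit extensions in $\PP_D$, which have sub $\cO(D)$, are $L_0$-stable. Their only destabilizing candidate is $\cO(D)$, by Remark~\ref{unique_extension}, and it does not destabilize. The sheaves in $\PP_{-K-D}$ have sub $\cO(-K-D)$ with $\chi=\chi(F)-\chi(\cO(D))>\chi(F)/2$, so they are unstable. Hence the $L_0$-stable sheaves are exactly the $\cC^-$-stable ones and $M_{L_0}=M^s_{L_0}=M_{\cC^-}$ as moduli functors, which gives an isomorphism. The case $D^2>\tfrac{1-n}{2}$ is symmetric. When $n$ is odd and $D^2=\tfrac{1-n}{2}$, both families are strictly Gieseker semistable with S-equivalence class $\cO(D)\oplus\cO(-K-D)$. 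We then obtain $\psi_\pm$ from the universal families and the coarse property of $M_{L_0}$, and $f$ from the natural morphism $M_{L_0}\to M^{\mu ss}_{L_0}$ \cite[Thm.~8.2.8]{HL}. The compatibilities $\phi_\pm=f\circ\psi_\pm$ hold by universality, and $\psi_\pm$ are bijective over $M^s_{L_0}$.

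The main obstacle is upgrading these bijections to isomorphisms. For the isomorphism over $M^s_{L_0}$ (and the isomorphism $M_{\cC^-}\cong M_{L_0}$), we plan to argue that both sides are fine moduli spaces for the same open functor, and then invoke smoothness from Lemma~\ref{smoothness} together with Zariski's main theorem on a bijective morphism to a normal variety.
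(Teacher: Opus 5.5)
Your proposal is correct and follows essentially the same route as the paper. It classifies strictly $\mu_{L_0}$-semistable sheaves, via the argument of Lemma~\ref{good_walls} and uniqueness of the wall through $L_0$, as $\cO(D)\oplus\cO(-K-D)$ or as nonsplit extensions in $\PP_D$ or $\PP_{-K-D}$; it obtains $\phi_\pm$ from the universal families and the universal property of $M^{\mu ss}_{L_0}$; and it decides Gieseker stability at $L_0$ by comparing $\chi(\cO(D))=D^2+2$ with $\chi(F)/2=(5-n)/2$.

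The step you flag as the main obstacle is not really one. As in the paper, once the $\cC^-$-stable and $L_0$-stable sheaves coincide (respectively, once $M_{\cC^-}\setminus\PP_D$ and $M^s_{L_0}$ parametrize the same sheaves), both spaces corepresent the same functor and are therefore canonically isomorphic. Zariski's main theorem would also work, but it is unnecessary.
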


\bp
We start by observing that $K\cdot L_0<0$ and that any $F$ as in (\ref{data}) that is $\cC^{-}$-stable or $\cC^{+}$-stable is necessarily $\mu_{L_0}$-semistable. 
Recall that the moduli spaces $M_{\cC^-}$ and $M_{\cC^+}$ admit universal families (see Remark \ref{universal}), and that 
$M^{\mu ss}_{L_0}$ has the property that for any flat family of $\mu_{L_0}$-semistable sheaves over a scheme $T$, there is a canonical morphism $T\ra M^{\mu ss}_{L_0}$ (see \cite[Section 8.2]{HL}). 
Therefore, by taking universal families over $M_{\cC^-}$ and $M_{\cC^+}$, we obtain canonical morphisms $\phi_-: M_{\cC^-}\ra M^{\mu ss}_{L_0}$ and $\phi_+: M_{\cC^+}\ra M^{\mu ss}_{L_0}$.

To prove that $\phi_-(\PP_{D})=\phi_+(\PP_{-K-D})=pt=M_{L_0}^{\mu ss}\setminus M_{L_0}^{\mu s}$, 
we first prove the following. 

\begin{claim}\label{claim}
Let $E$ be a strictly  $\mu_{L_0}$-semistable torsion-free sheaf with (\ref{data}).
Then exactly one of the following conditions holds.
\begin{itemize}
    \item Either $E\cong \cO(D)\oplus \cO(-K-D)$ (in particular, $E$ is neither $\cC^-$-stable nor $\cC^+$-stable), or
    \item $E$ corresponds to a point in $\PP_D\subset M_{\cC^-}$ and is not $\cC^+$-stable, or
    \item $E$ corresponds to a point in $\PP_{-K-D}\subset M_{\cC^+}$ and is not $\cC^-$-stable.

\end{itemize}
\end{claim}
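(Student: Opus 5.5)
Let $E$ be strictly $\mu_{L_0}$-semistable with (\ref{data}). Then there is a rank one subsheaf $F_0\subset E$ with $\mu_{L_0}(F_0)=\mu_{L_0}(E)$, and we may take it saturated. Saturation gives an exact sequence
$$0\ra \cO(D')\otimes\cI_{Z_1}\ra E\ra \cO(-K-D')\otimes\cI_{Z_2}\ra 0 .$$
The slope condition says $(2D'+K)\cdot L_0=0$.

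The first step is to identify $D'$. By Riemann--Roch and $c_2(E)=2$ we get $\length(Z_1)+\length(Z_2)=2+D'\cdot(D'+K)$, which is $\geq 0$. Hence $(2D'+K)^\perp$ satisfies condition (1) of Definition~\ref{def:wall}, and it contains the polarization $L_0$ with $K\cdot L_0<0$. So it is a $K$-negative wall, unless $2D'+K\equiv 0$; that case is impossible since $K$ is not divisible by $2$ in $\Pic(S)$. Because $L_0$ lies on no wall other than $(2D+K)^\perp$, we get $2D'+K$ proportional to $2D+K$. Using the wall genericity of $L_0$ (it lies in no other wall, and by local finiteness the hyperplanes through $L_0$ are only this one), this gives $D'\in\{D,-K-D\}$. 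Then Lemma~\ref{good_walls}(1) gives $D'\cdot(D'+K)=-2$, so $Z_1=Z_2=\emptyset$. Therefore $E$ is locally free and is an extension of $\cO(-K-D)$ by $\cO(D)$ or of $\cO(D)$ by $\cO(-K-D)$. The subtle point here is that two different classes $D'$ can give the same hyperplane. I would handle this by noting that $2D'+K=c(2D+K)$ with $c\neq\pm1$ would force $D'^2$ outside the range $[-n,1]$ of Lemma~\ref{good_walls}(2). This comparison of $D'^2$ against the allowed range is the step I expect to need the most care.

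The second step is the case split.
\begin{itemize}
\item If the extension splits, then $E\cong\cO(D)\oplus\cO(-K-D)$. Each summand destabilizes on one side of the wall, so $E$ is neither $\cC^-$- nor $\cC^+$-stable.
\item If $E$ is a non-split extension $0\ra\cO(D)\ra E\ra\cO(-K-D)\ra0$, then by Proposition~\ref{description_of_walls}(1) it is $\cC^-$-stable and not $\cC^+$-stable. So $[E]\in\PP_D\subset M_{\cC^-}$.
\item If $E$ is a non-split extension the other way, Proposition~\ref{description_of_walls}(2) puts $[E]$ in $\PP_{-K-D}$ and shows it is not $\cC^-$-stable.
\end{itemize}

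The last step is exclusivity. The split sheaf is stable in neither chamber, so it cannot lie in $\PP_D$ or $\PP_{-K-D}$. A sheaf in $\PP_D$ is $\cC^-$-stable, while a sheaf in $\PP_{-K-D}$ is not $\cC^-$-stable, so no sheaf lies in both. Finally, a non-split extension cannot also be isomorphic to the split sum, since its isomorphism class determines whether it is $\cC^-$-stable.
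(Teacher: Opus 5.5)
Your outline matches the paper's proof step for step: the saturated subsheaf $\cO(D')\otimes\cI_{Z_1}$, the fact that $(2D'+K)^\perp$ is a $K$-negative wall through $L_0$, proportionality with $2D+K$ because $L_0$ lies on a unique wall, and the case split via Proposition~\ref{description_of_walls}. Your exclusivity paragraph is a correct addition that the paper leaves implicit.

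The one step that does not work as proposed is the one you flagged. You want to exclude $2D'+K=c(2D+K)$ with $c\neq\pm1$ by checking $D'^2$ against $[-n,1]$, but that check cannot decide the question. Take $n=6$ and the classes $D=9h-5e_1-5e_2-2\sum_{i\geq3}e_i$ and $D'=4h-2e_1-2e_2-\sum_{i\geq3}e_i$. They satisfy $D\cdot(D+K)=-2$ and $2D+K=3(2D'+K)$. Yet $D^2=-1$, $D'^2=0$ and $-K\cdot D'=0$ all lie in the ranges of Lemma~\ref{good_walls}(2). This hyperplane is not actually a wall, since $5h-3e_1-3e_2-\sum_{i\geq3}e_i$ is effective, but the range check does not detect that.

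What rules out $c\neq\pm1$ is Lemma~\ref{good_walls}(1) applied to $D'$. You may use it, because you have shown $(2D'+K)^\perp$ is a $K$-negative wall. However, you invoke it only after concluding $D'\in\{D,-K-D\}$, so the order must be reversed. From $D'\cdot(D'+K)=-2=D\cdot(D+K)$ you get $(2D'+K)^2=4D'\cdot(D'+K)+K^2=K^2-8=(2D+K)^2$. Since $K^2-8=-(n+3)\neq0$, this forces $c^2=1$. Then $c=1$ gives $D'=D$ and $c=-1$ gives $D'=-K-D$. This is exactly how the paper concludes $\lambda=\pm1$. The same identity gives $\length(Z_1)+\length(Z_2)=2+D'\cdot(D'+K)=0$. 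With this reordering your argument is complete.
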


\begin{proof}[Proof of the claim]
By assumption, there is a rank $1$ (saturated) subsheaf $F_0\subset E$ such that $\mu_{L_0}(F_0)=\mu_{L_0}(E)$.
We have that $F_0=\cO(D')\otimes \cI_{Z_1}$ and $E/F_0=\cO(-K-D')\otimes \cI_{Z_2}$, for some divisor $D'$ on $S$ and subschemes
$Z_1,Z_2\subset S$ that are either empty or $0$-dimensional. 
In other words, $E$ fits into an exact sequence of the form 
$$
0\ra \cO(D')\otimes\cI_{Z_1}\ra E\ra \cO(-K-D')\otimes\cI_{Z_2}\ra 0.
$$
The condition that $\mu_{L_0}(\cO(D')\otimes \cI_{Z_1})=\mu_{L_0}(E)$ implies that  
$L_0$ belongs to the $K$-negative wall $(2D'+K)^{\perp}$. 
By Lemma~\ref{good_walls}, $Z_1=Z_2=\emptyset$ and $D'\cdot (D'+K)=-2$.
Since by assumption $L_0$ lies on a unique wall, we must have $(2D'+K)=\lambda(2D+K)$ for some $\lambda\in\RR$. 
The condition that $D'\cdot (D'+K)=-2$ then implies that $\lambda=\pm 1$, i.e., $D'=D$ or $D'=-D-K$.
Therefore, either $E\cong \cO(D)\oplus \cO(-K-D)$, or $E$ fits into a non-split exact sequence
described in Proposition~\ref{description_of_walls} (1) or (2).
In case (1), $E$ is $\cC^-$-stable and not $\cC^+$-stable, and it corresponds to a point in $\PP_D\subset M_{\cC^-}$. 
In case (2), $E$ is $\cC^+$-stable and not $\cC^-$-stable, and it corresponds to a point in $\PP_{-K-D}\subset M_{\cC^+}$. 
\end{proof}

The sheaf $\cO(D)\oplus \cO(-K-D)$ and any torsion-free sheaf $E$  in $\PP_D\subset M_{\cC^-}$ or in $\PP_{-K-D}\subset M_{\cC^+}$ correspond to the same point in $M^{\mu ss}_{L_0}$, as their $\mu$-Jordan-H\"older filtrations have factors $\cO(D)$ and $\cO(-K-D)$ (see \cite[Theorem 8.2.11]{HL}). This observation together with Claim~\ref{claim} finishes the proof of (1). 

Now we proceed to prove (2).
Consider first the case when $D^2<\frac{1-n}{2}$.
We will show that a torsion-free sheaf $E$ with (\ref{data}) is $\cC^-$-stable if and only if it is $L_0$-stable.
This implies that $M_{\cC^-}$ and $M_{L_0}$ are coarse moduli spaces for the same moduli functor, and thus $M_{\cC^-}\cong M_{L_0}=M^s_{L_0}$.

Suppose first that $E$ is $\cC^-$-stable, and thus $\mu_{L_0}$-semistable. 
If $E$ is $\mu_{L_0}$-stable, then it is $L_0$-stable.
So we may assume that $E$ is strictly $\mu_{L_0}$-semistable, and Claim~\ref{claim} implies that 
$E$ fits into a non-split exact sequence
$$
    0\ra \cO(D)\ra E\ra \cO(-K-D)\ra0.
$$    
Recall from Remark~\ref{rmk:stability} that $E$ is $L_0$-stable if and only if it is
$\mu_{L_0}$-semistable and $\chi(F)<\frac{\chi(E)}{2}$ for any rank $1$ (saturated) subsheaf $F\subset E$ 
such that  $\mu_{L_0}(F)= \mu_{L_0}(E)$. 
As in the proof of the claim, we see that the only rank $1$ saturated subsheaf $F\subset E$ such that  $\mu_{L_0}(F)= \mu_{L_0}(E)$ is $F\cong\cO(D)$. 
We easily calculate $\chi\big(\cO(D)\big)$ and $\chi(E)$ using Riemann-Roch.
The assumption that $D^2<\frac{1-n}{2}$ then gives 
$$
\chi\big(\cO(D)\big) = D^2 +2 <\frac{5-n}{2}=\frac{K^2}{2}=\frac{\chi(E)}{2},
$$
and therefore $E$ is ${L_0}$-stable.

Conversely, suppose that $E$ is $L_0$-stable, but not $\cC^-$-stable. Then $E$ is strictly $\mu_{L_0}$-semistable,  
and Claim~\ref{claim} implies that 
$E$ fits into an exact sequence
$$
0\ra \cO(-K-D)\ra E\ra \cO(D)\ra0.
$$    
Since $E$ is $L_0$-stable, we must have $\chi\big(\cO(-K-D)\big)<\frac{\chi(E)}{2}$.
Riemann-Roch then gives 
$$
\chi\big(\cO(-K-D)\big) = 3-n -D^2 <\frac{5-n}{2}=\frac{\chi(E)}{2},
$$
contradicting the assumption that $D^2<\frac{1-n}{2}$.

The cases $D^2>\frac{1-n}{2}$ and $D^2=\frac{1-n}{2}$ are treated analogously. Note that if $D^2=\frac{1-n}{2}$ then all torsion-free sheaves  in $\PP_D\subset M_{\cC^-}$ or in $\PP_{-K-D}\subset M_{\cC^+}$ correspond to the same point in $M_{L_0}$, as their Jordan-H\"older filtrations have factors $\cO(D)$ and $\cO(-K-D)$.
 
\ep

\begin{rmk}\label{flips&flops}
Let $(2D+K)^{\perp}$, $\cC^-$, $\cC^+$ and $L_0$ be as in Proposition~\ref{reduction map},
and suppose that $-n+2 \leq D^2 \leq -1$.
Then the composed map $\phi_+\circ \phi_-:M_{\cC^-}\dashrightarrow M_{\cC^+}$ is an isomorphism in codimension one that fits into the following diagram.
$$
    \xymatrix@R=.5cm@C=.5cm{
    \PP_D \subset M_{\cC^-} \  \ar[rd]_{\phi_-} \ar@{-->}[rr]^{\phi_+\circ \phi_-} && 
    \   M_{\cC^+}\supset \PP_{-K-D} \ar[ld]^{\phi_+}\\
    & M^{\mu ss}_{L_0}
    }
$$
From the description of the normal bundles of the exceptional loci $\PP_D \subset M_{\cC^-}$ and $\PP_{-K-D} \subset M_{\cC^+}$ in Theorem~\ref{prop:special_loci}, one can easily compute the intersection numbers $-K_{\cC^-}\cdot \ell_D$ and $-K_{\cC^+}\cdot \ell_{-K-D}$, where $\ell_D$ denotes a line in $\PP_D$ and $\ell_{-K-D}$ denotes a line in $\PP_{-K-D}$.
It follows that 
$$
\phi_+\circ \phi_-:M_{\cC^-}\dashrightarrow M_{\cC^+} \quad \text{is a } \quad  
\left\{
    \begin{array}{l}
    \text{flip}  \\
    \text{flop}  \\
    \text{anti-flip}     
    \end{array}   
    \right.
\quad \text{if}  \quad 
\left\{
    \begin{array}{l}
    D^2<\frac{1-n}{2}  \\
    D^2=\frac{1-n}{2}  \\
    D^2>\frac{1-n}{2}     
    \end{array}   
    \right. \ .
$$
\end{rmk}

\section{Blowups of the projective plane revisited}\label{section:blowups2}

In this section, we revisit the blowups of the projective plane at Cremona-general points discussed in Section~\ref{subsection:blowups1}, in light of the stability wall-and-chamber decomposition of the ample cone described in the previous section.
Throughout this section, we denote by $S=\Bl_k\PP^2$ the blowup of $\PP^2$ at $k=n+4\geq 6$ Cremona-general points $Q_1, \dots, Q_{k}$, and follow Notation~\ref{notation_Pic(S)}.
In Subsection~\ref{subsection:classification_of_walls}, we provide a partial classification of $K$-negative walls of $\N^1(S)$. 
In Subsection~\ref{subsection:C_0}, we focus on a particular stability chamber $\cC_0\subset \Nef(S)^{K\leq0}$ and show that the corresponding moduli space $M_{\cC_0}$ is isomorphic to $\Bl_k\PP^n$. In particular, this proves Theorem~\ref{thm:moduli}.
In Subsection~\ref{subsection:special_cones}, we describe two special subcones $\Pi\subset \cE\subset \Nef(S)^{K\leq0}$ containing the chamber $\cC_0$. The relevance of these cones will become clear in the following sections: in Section~\ref{section:determinant}, we construct a special linear isomorphism $\rho:\N^1(S)\to \N^1(\Bl_{n+4}\PP^n)$, and in Section~\ref{section:proof_main_thm}, we show $\rho$ maps the cones 
$$
   \cC_0\subseteq \Pi \subseteq \cE\subseteq \N^1(S)^{K\leq0}
$$ 
bijectively onto the cones 
$$
\Nef(\Bl_{n+4}\PP^n)\subseteq \Mov(\Bl_{n+4}\PP^n)^{K\leq0}\subseteq  \Eff(\Bl_{n+4}\PP^n)^{K\leq0}\subseteq \N^1(\Bl_{n+4}\PP^n)^{K\leq0}.
$$

\subsection{A partial classification of $K$-negative walls}\label{subsection:classification_of_walls}

Recall from Definition~\ref{def:wall} and Lemma~\ref{good_walls} 
that $K$-negative (stability) walls are hyperplanes of the form $(2D+K)^{\perp}\subset \N^1(S)$,
with $D\in \Pic(S)$ satisfying $D\cdot (D+K)= -2$ and $-n\leq D^2 \leq 1$.
This motivates the following definition. 

\begin{defn}
A \emph{numerical rational class} is a divisor class $D\in \Pic(S)$ such that 
$$D^2+D\cdot K=-2 \quad \text{and} \quad -n\leq D^2 \leq 1.$$ 
A numerical rational class $D$ is called 
\bi
\item a \emph{numerical cubic} if $D\cdot (-K)=3$ ($\iff D^2=1$),
\item a \emph{numerical conic} if $D\cdot (-K)=2$ ($\iff D^2=0$),
\item a \emph{numerical line} if $D\cdot (-K)=1$ ($\iff D^2=-1$),
\item a \emph{$(-m)$-numerical rational class} if $D^2=-m\leq -2$.
\ei
\end{defn}

\begin{rmk}
Note that $D\in \Pic(S)$ is a numerical rational class if and only if so is $-K-D$.
A numerical rational class $D\in \Pic(S)$ defines a $K$-negative wall if and only if 
there is a polarization $L$ such that
$$(2D+K)\cdot L=0\quad \text{and} \quad -K\cdot L>0.$$
As we shall see below, not every numerical rational class defines a $K$-negative wall. 
If a numerical rational class $D\in \Pic(S)$ defines a $K$-negative wall,
then we must have 
$$H^0(2D+K)=0 \quad \text{ and }\ \quad H^0(K-D)=0.$$
\end{rmk}

The following result gives a partial classification of $K$-negative walls.

\begin{prop}\label{partial classif}
Let $D$ be a numerical rational class with $D^2\in\{1, 0, -1, -2\}$.
\begin{enumerate}
\item If $k\leq 9$, then $H^0(2D+K)=0$ and $H^0(K-D)=0$.
\item Suppose that $H^0(2D+K)=0$ and $H^0(K-D)=0$. 
(This condition holds whenever $(2D+K)^\perp$ is a $K$-negative wall.)
\begin{itemize}
\item If $D^2=1$, then $D\in \cW\cdot h$. 
\item If $D^2=0$, then $D\in \cW\cdot (h-e_i)$. 
\item If $D^2=-1$, then $D\in \cW\cdot (h-e_i-e_j)$ (or, equivalently $D\in \cW\cdot e_i$). 
\item If $D^2=-2$ and $k\leq 9$, then  
$D\in \cW\cdot (h-e_i-e_j-e_l)$ (or, equivalently, $D\in \cW\cdot (e_i-e_j)$).   
\end{itemize}
\end{enumerate}
In particular, for $k\leq9$, numerical lines, conics and cubics are completely classified. 
\end{prop}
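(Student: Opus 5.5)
Part (1) is a cohomology vanishing; I would read it off the description of effective classes in Lemma~\ref{E positive surface case}. For $k\leq 9$ the anticanonical class is nef ($k=9$) or ample ($k\leq8$). Hence an effective class $G$ satisfies $G\cdot(-K)\geq0$, with equality only if $G$ is a multiple of $-K$ (when $k=9$) or $G=0$. Now compute: $(2D+K)\cdot(-K)=2D\cdot(-K)-K^2$ with $K^2=9-k=5-n$, and $D\cdot(-K)=D^2+2\in\{3,2,1,0\}$. For $k\leq 8$, $K^2\geq1$ and we need $2D\cdot(-K)<K^2$ or a separate argument. I expect this to need a case check rather than a pure intersection argument, since for small $k$ the number $(2D+K)\cdot(-K)$ can be positive. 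In that case I would instead use that $2D+K$ effective would force $2D+K\in\ZZ_{\geq0}(-K)+\sum\ZZ_{\geq0}e$ (Lemma~\ref{E positive surface case}(2)). Pairing with $D$, or using the Weyl action to move $D$ to a normal form, then gives a contradiction. Similarly $(K-D)\cdot(-K)=-K^2-D\cdot(-K)<0$ whenever $K^2>0$ or $D\cdot(-K)>0$. The remaining case $k=9$, $D^2=-2$ gives $(K-D)\cdot(-K)=0$, so $K-D$ would be a multiple $m(-K)$. Then $D=(m+1)K$ and $D^2=0\neq-2$, a contradiction. So $H^0(K-D)=0$ is straightforward, and $H^0(2D+K)=0$ is where the case analysis sits.

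For part (2), the plan is a reduction-by-Weyl-group argument, inducting on $\deg D=D\cdot h$ as in the classical proof that $(-1)$-classes are $\cW$-translates of $e_1$. The conditions $D^2+D\cdot K=-2$ and $D^2$ are $\cW$-invariant. The vanishings $H^0(2D+K)=0$ and $H^0(K-D)=0$ are also $\cW$-invariant, since $\cW$ preserves effectivity (Remark~\ref{rmk:cremona_action}) and fixes $K$. So we may replace $D$ by any element of its orbit. Write $D=dh-\sum a_ie_i$ and permute so that $a_1\geq a_2\geq\dots$.
\begin{itemize}
\item If some $a_i<0$, I would show $D$ is essentially forced to be $e_i$, or reduce using $D\cdot e_i<0$.
\item If $d<0$, or if $d=0$ with $D\neq e_i$, then $D$ is dominated by effective negative classes, and I would use the vanishing hypotheses to exclude it.
\item Otherwise, when $a_1+a_2+a_3>d$, apply $s_0$ to get $d'=2d-a_1-a_2-a_3<d$ and descend.
\end{itemize}
The descent ends at a class with $d\geq a_1+a_2+a_3$ and all $a_i\geq0$, i.e.\ $D$ in the fundamental chamber. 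There, Cauchy--Schwarz-type inequalities with $3d-\sum a_i=D\cdot(-K)\in\{3,2,1,0\}$ and $d^2-\sum a_i^2=D^2$ pin $D$ down to $h$, $h-e_1$, $h-e_1-e_2$ or $h-e_1-e_2-e_3$.

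The main obstacle is the role of the hypotheses. Without them, numerical classes such as $-K-D$ for $D$ of large degree, or non-reduced possibilities with negative $d$, could appear. I would use $H^0(K-D)=0$ to rule out $d<0$ with large $a_i$. I would use $H^0(2D+K)=0$ to rule out chamber classes not in the listed orbits. For $D^2=-2$, for instance, the exceptional solution $-K$-type classes exist only when $k\geq10$, which explains the restriction $k\leq9$ there.
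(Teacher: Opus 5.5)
Your argument for $H^0(K-D)=0$ in (1), the $\cW$-invariance of the hypotheses, and the Cremona descent are sound and match the paper. There are, however, two genuine gaps.

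\textbf{Part (1).} For $H^0(2D+K)=0$ you are right that the intersection argument breaks down: $(2D+K)\cdot(-K)=2D\cdot(-K)-K^2$, which equals $k-3>0$ for $D=h$. The paper's one-line proof writes $-K^2+2D\cdot K$ at this point, which is a sign slip. With the correct sign, that argument only disposes of the cases $2(D^2+2)\le 9-k$, such as $D^2=-2$.

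The case check you promise cannot end in a contradiction, because the vanishing is false. For $k=8$ take $D=-3K+2(e_1-e_2)=9h-e_1-5e_2-3\sum_{i=3}^{8}e_i$. Then $D^2=1$ and $D\cdot K=-3$, so $D$ is a numerical cubic. On the other hand,
$2D+K=15h-e_1-9e_2-5\sum_{i\geq3}e_i=3\bigl(3h-2e_2-\sum_{i\geq3}e_i\bigr)+\bigl(6h-e_1-3e_2-2\sum_{i\geq3}e_i\bigr)$
is effective. Indeed, a cubic singular at $Q_2$ through $Q_3,\dots,Q_8$ exists ($10-3-6=1$), and the sextics in the second summand form at least a net ($28-6-18-1=3$).

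Pulling back gives the same failure for $k=9$. Since $2h+K=-h+\sum e_i$ is not effective and $\cW$ preserves effectivity, $D\notin\cW\cdot h$, so the closing ``in particular'' also fails for cubics.

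Neither of your proposed repairs can work. Pairing with $D$ only gives $(2D+K)\cdot D=D^2-2<0$, which contradicts nothing. Moving $D$ to a normal form is exactly the classification in (2), which needs the vanishing as input. Part (1) survives only in restricted form, e.g.\ for $D^2=-2$, and (by a lattice count) for $k\le 7$.

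\textbf{Part (2).} You use $H^0(2D+K)=0$ in the wrong place, and this hides the key step.

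In the fundamental chamber with all $a_i\ge0$ the hypothesis is not needed. Noether's inequality (Lemma~\ref{Nagata}) gives $a_1+a_2+a_3>d$ whenever $d\ge2$, so only $h$ and $h-e_1$ occur there. Note that $h-e_1-e_2$ and $h-e_1-e_2-e_3$ are not in the chamber; for $D^2\in\{-1,-2\}$ the descent continues down to $e_i$ or $e_i-e_j$.

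The descent really gets stuck at classes with positive coefficients on exceptional curves. An example is the numerical line $3h-\sum_{i\le9}e_i+e_{10}$ for $k\ge10$, whose degree no standard Cremona transformation lowers. Your bullet ``reduce using $D\cdot e_i<0$'' gives no mechanism for such classes.

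The missing idea is the paper's Euler characteristic bound. Write $D=D'+\sum_{j\in T}l_je_j$ with $l_j>0$, and $D'=dh-\sum_{i\in R}m_ie_i$ with $m_i>0$ and $R\cap T=\emptyset$. For $d>0$ one has $h^0(2D'+K)\le h^0(2D+K)=0$ and $h^2(2D'+K)=h^0(-2D')=0$. Hence $0\ge\chi(2D'+K)=D^2-1+\sum_{j\in T}(2l_j^2+l_j)$. This forces $T=\emptyset$ when $D^2\ge-1$, and $T=\{j\}$ with $l_j=1$ when $D^2=-2$; Cauchy--Schwarz excludes the latter for $k\le10$.

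Only after this does the descent apply. Here $H^0(K-D)=0$ gives $h^0(D)\ge\chi(D)=D^2+2>0$, hence $d\ge0$ for $D^2\ge-1$.

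Finally, discarding $d<0$ via the vanishings is wrong for $D^2=-2$. The class $e_1+e_2+e_3-h=s_0(h-e_1-e_2-e_3)$ satisfies both vanishings and lies in the orbit. One must instead pass to $-D$, or to $-K-D$ when $k=9$.
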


\begin{rmk}
When $k\geq10$, there are numerical rational classes $D$ that do not satisfy $H^0(2D+K)=0$, in particular, they do not define $K$-negative walls. Consider for instance the (effective) numerical line $D=3h-e_1-\ldots-e_9+e_{10}$. Note also that $D\notin \cW\cdot (h-e_i-e_j)$.  
\end{rmk}

\bp[{Proof of Proposition~\ref{partial classif}}]
To prove (1), note that, if $k\leq 9$, then $-K$ is nef and $K^2\geq0$.
Since $D$ is a numerical rational class with $D^2\in\{1, 0, -1, -2\}$, we must have $D\cdot K\leq0$.
Therefore we get that 
$$(2D+K)\cdot (-K)=-K^2+2D\cdot K\leq0,$$
$$(K-D)\cdot (-K)=-K^2+D\cdot K\leq0,$$
with both inequalities strict if $D\cdot K<0$ or $k\leq8$.
In either of these cases, we conclude that $H^0(2D+K)=0$ and $H^0(K-D)=0$.
So we are left with the case when $k=9$, $K^2=0$ and $D\cdot K=0$, i.e., $D^2=-2$.
Since $-K$ is effective, if $H^0(2D+K)\neq 0$, then $H^0(2D)\neq 0$. 
By Lemma~\ref{E positive surface case}(4), $D$ must be a multiple of $-K$, but in this case $D$ cannot be a numerical rational class. 
Similarly, if $H^0(K-D)\neq 0$, then $H^0(-D)\neq 0$.
Again, this would imply that $-D$ is a multiple of $-K$, which is not possible. 

Next we prove (2).
Since $D$ is a numerical rational class, we have $\chi(D)=D^2+2$. 
By assumption, we have $h^2(D)=h^0(K-D)=0$ and $h^0(2D+K)=0$.
If $D^2\in\{1,0,-1\}$, then Riemann-Roch gives that $h^0(D)>0$. %, and so $d\geq0$. 
Write 
$$D=D'+\sum_{j\in T}l_j e_j  \quad \text{and} \quad \ D'=dh-\sum_{i\in R}m_i e_i  \quad \text{with} \quad R\cap T=\emptyset, \quad l_j, m_i>0.$$
Note that $h^0(D)=h^0(D')$.
Moreover,
\begin{equation}\label{eqns}
\sum_{i\in R}m_i^2+\sum_{j\in T}l_j^2=d^2-D^2 \quad \text{and} \quad \sum_{i\in R}m_i-\sum_{j\in T}l_j=3d-D^2-2.
\end{equation}

Assume $d=0$. Then the cases $D^2=1,0$ are not possible. If $D^2=-1$, then $D=e_j$, while if $D^2=-2$, then $D=e_i-e_j$ ($i\neq j$) and the result follows. 

Assume $d=1$. The following statements follow from \eqref{eqns}.
\bi
\item If $D^2=1$, then $D=h$.
\item If $D^2=0$, then $D=h-e_i$.
\item If $D^2=-1$, then $D=h-e_i-e_j$.
\item If $D^2=-2$, then $D=h-e_i-e_j-e_l$.
%\item If $D^2=-3$, then $D=h-e_i-e_j-e_k-e_l$.
\ei

Assume now that $d\geq2$. 
We have 
$$
D'^2=D^2+\sum_{j\in T}l_j^2 \, , \quad D'\cdot K=D\cdot K+\sum_{j\in T}l_j  \quad \text{and}  \quad \chi(2D'+K)=-1+D^2+\sum_{j\in T}(2l_j^2+l_j) \, .
$$
Moreover, $h^0(2D'+K)=h^0(2D+K)=0$ and $h^2(2D'+K)=h^0(-2D')=0$ since $d>0$. 
It follows that  $\chi(2D'+K)\leq 0$, i.e., 
$$\sum_{j\in T}(2l_j^2+l_j)\leq 1-D^2.$$
Since $2l_j^2+l_j\geq3$, it follows that $T=\emptyset$ if $D^2\in\{1,0,-1\}$, and $|T|\leq1$ if $D^2=-2$. The case
$D^2=-2$, $|T|=1$ may occur if $k=n+4\geq11$ (see Remark \ref{special -2 classes} below), but we claim that it does not occur if $k=n+4\leq 10$. Indeed, if $D^2=-2$ and $|T|=1$, we may assume $T=\{k\}$ and we have $l_k=1$.
By (\ref{eqns}) and Cauchy-Schwarz inequality, we have 
$$(3d+1)^2=\big(\sum_{i=1}^{k-1}m_i\big)^2\leq (k-1)\big(\sum_{i=1}^{k-1}m_i^2\big)=(k-1)(d^2+1),$$
which leads to a contradiction if $k\leq 10$ and $d\geq2$. %The case $D^2=-3$, $|T|=1$ is similar. 

Assume from now on that $T=\emptyset$. 
We may assume without loss of generality 
$$m_1\geq m_2\geq\ldots \geq m_k\geq 0.$$ 
We observe that $2d\geq m_1+m_2+m_3$. This is clear if $h^0(D)>0$ (for example, if $D^2\in\{1,0,-1\}$) as $2h-e_1-e_2-e_3$ is a movable curve class. Assume now 
$D^2=-2$ and $2d<m_1+m_2+m_3$. We have
$$(2d+1)^2\leq (m_1+m_2+m_3)^2\leq 3(m_1^2+m_2^2+m_3^2)\leq 3\sum_{i=1}^k m_i^2\leq 3(d^2+2),$$
which is impossible if $d\geq 2$. 

We also observe that $m_3>0$. Indeed, if $m_3=0$, by (\ref{eqns}) we have
$$(3d-D^2-2)^2=(m_1+m_2)^2\leq 2(m_1^2+m_2^2)=2(d^2-D^2),$$
which cannot happen if $D^2\in\{1,0,-1, -2\}$ and $d\geq2$.

If $D^2\in\{1,0,-1, -2\}$, then Noether's inequality (see Lemma \ref{Nagata} below) gives that
$m_1+m_2+m_3>d$. We are now done by induction on $d$.
Indeed, let $w\in \cW$ correspond to the Cremona transformation centered at the points $Q_1, Q_2, Q_3$.
It lowers the $h$-degree of $D$ while keeping it non-negative: 
$$0\leq 2d-m_1-m_2-m_3=(w\cdot D)\cdot h<D\cdot h=d.$$

Assume now that $k=n+4=9$, $D^2=-2$ and $d=D\cdot h<0$. In this special case $-K-D$ is also a $(-2)$-numerical rational class, and $(-K-D)\cdot h>0$. It follows from the previous argument that, up to the action of $\cW$, we may assume that 
$D=-K-(h-e_1-e_2-e_3)=2h-e_4-\ldots-e_9$. If $w\in \cW$ corresponds to the Cremona transformation centered at the points $Q_4,Q_5,Q_6$, then $w\cdot D=h-e_7-e_8-e_9$, and hence $D\in\cW\cdot (h-e_1-e_2-e_3)$. 
The cases $k=6, 7, 8$ can be treated similarly. 
\ep

The following result is known as Noether's inequality. 
 
\begin{lemma}[{\cite[Section V.3, Proposition 5]{DO}}]\label{Nagata}
Let $D=dh-\sum_{i=1}^{k} m_ie_i\in \Pic(S)$, with $d\geq 2$, $m_1\geq m_2\geq\ldots\geq m_k\geq0$. %, and $m_3>0$. 
Suppose that $D$ is a numerical rational class with $D^2\in\{1,0,-1,-2\}$.
Then $m_1+m_2+m_3>d$.  
\end{lemma}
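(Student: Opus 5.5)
We argue by contradiction, assuming $m_1+m_2+m_3\le d$, and we use only the two numerical identities that define a numerical rational class. Set $c:=D^2+2\in\{3,2,1,0\}$. Since $D^2=d^2-\sum m_i^2$ and $D\cdot(-K)=3d-\sum m_i=D^2+2$, we get
$$\sum_{i=1}^k m_i^2=d^2-c+2,\qquad \sum_{i=1}^k m_i=3d-c.$$
The plan is to bound $\sum m_i^2$ from above using the ordering of the $m_i$ and the assumed inequality, and to compare the bound with $d^2-c+2$.

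First we dispose of the case $m_3=0$. Then only $m_1,m_2$ can be nonzero, so $m_1+m_2=3d-c$ and $m_1^2+m_2^2=d^2-c+2$. The inequality $(m_1+m_2)^2\le 2(m_1^2+m_2^2)$ becomes $(3d-c)^2\le 2(d^2-c+2)$. For $d\ge 2$ and $c\le 3$ this fails, as in the computation in the proof of Proposition~\ref{partial classif}. Hence we may assume $m_3\ge1$.

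For the main estimate, use $m_i\le m_3$ for $i\ge3$ to write
$$\sum_{i\ge 3} m_i^2\le m_3\sum_{i\ge3}m_i=m_3(3d-c-m_1-m_2).$$
This gives
$$d^2-c+2\le m_1(m_1-m_3)+m_2(m_2-m_3)+3dm_3-cm_3 .$$
Put $t=m_1+m_2$. With $t$ fixed and $m_1\ge m_2\ge m_3$, the quantity $m_1(m_1-m_3)+m_2(m_2-m_3)$ is convex in $m_1$, so it is maximized at the extreme point $m_2=m_3$, $m_1=t-m_3$, where it equals $(t-m_3)(t-2m_3)$. This expression is increasing in $t$ for $t\ge 2m_3$. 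The contradiction hypothesis gives $t\le d-m_3$, so
$$d^2-c+2\le (d-2m_3)(d-3m_3)+3dm_3-cm_3=d^2-2dm_3+6m_3^2-cm_3 .$$
Rearranging, this says $2m_3(d-3m_3)+cm_3\le c-2$.

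The hypothesis also gives $d\ge m_1+m_2+m_3\ge 3m_3$, so the left-hand side is at least $cm_3$. Since $m_3\ge1$, we get $c\le cm_3\le c-2$, which is a contradiction. Therefore $m_1+m_2+m_3>d$.

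The step I expect to require the most care is the maximization of $m_1(m_1-m_3)+m_2(m_2-m_3)$ under the constraints. The cruder bound $m_1^2+m_2^2\le(m_1+m_2)^2$ is too weak to reach a contradiction, so one must use the extreme configuration $m_2=m_3$ and check the monotonicity in $t$ on the admissible range $2m_3\le t\le d-m_3$. The boundary case $m_3=0$ is handled separately via Cauchy--Schwarz, as above.
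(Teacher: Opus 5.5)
Your proof is correct, and it is essentially the classical proof of Noether's inequality that the paper cites from Dolgachev--Ortland. In both, one subtracts $m_3$ times $\sum m_i=3d-c$ from $\sum m_i^2=d^2-c+2$, bounds the tail $\sum_{i\ge3}m_i(m_i-m_3)\le 0$, and plays the leftover $m_1(m_1-m_3)+m_2(m_2-m_3)$ against $m_1+m_2+m_3\le d$; your case $m_3=0$ is the same Cauchy--Schwarz computation the paper uses in the proof of Proposition~\ref{partial classif}. The only difference is cosmetic: you maximize exactly over $t=m_1+m_2$, whereas the usual route bounds the leftover by $m_1(d-3m_3)$ to get $(d-m_1)(d-3m_3)\le c(1-m_3)-2<0$, and either version closes the argument.
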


\begin{rmk}\label{special -2 classes}
The argument in the proof of Lemma~\ref{Nagata} given in \cite[Section V.3, Proposition 5]{DO} can be used to treat $(-2)$-numerical rational classes of the form 
$$D=dh-\sum_{i=1}^l m_ie_i +e_{l+1},$$
with $m_1\geq m_2\geq\ldots\geq m_l>0$, $l\geq3$. 
But in this case it only shows that $m_1+m_2+m_3\geq d$, and in fact the equality $m_1+m_2+m_3=d$ can occur.

For example, if $k\geq11$ then $D=3h-e_1-\ldots-e_{10}+e_{11}$ is a $(-2)$-numerical rational class for which no $w\in \cW$ decreases its $h$-degree. In particular, $D\notin\cW\cdot (h-e_1-e_2-e_3)$. Note, however, that $(2D+K)^\perp$ is not a $K$-negative wall. Indeed, if $(2D+K)\cdot L=0$ and $K\cdot L<0$, then $(D+K)\cdot L<0$. But this is not possible for an ample $L$ since $D+K=2e_{11}+e_{12}+\ldots +e_k$ is effective.
\end{rmk}

As we explained in Section~\ref{section:moduli}, the set of stability walls is locally finite on $\Amp(S)$ (\cite[Lemma 4.C.2]{HL}). 
We end this subsection by discussing the locally finiteness of $K$-negative stability walls through boundary points of $\Nef(S)$.

\begin{proposition}\label{locally finite}
Let $L\in \Nef(S)$ be such that $L^2>0$. Then there is an open neighborhood of $L$ intersecting finitely many $K$-negative stability walls.
\end{proposition}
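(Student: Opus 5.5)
Fix $L\in\Nef(S)$ with $L^2>0$. The plan is to show that the classes $2D+K$ defining $K$-negative walls that meet a small neighbourhood of $L$ lie in a bounded region of the lattice $\Pic(S)$, hence form a finite set.

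\textbf{Setup.} By Lemma~\ref{good_walls}, every $K$-negative wall is $(2D+K)^\perp$ with $D\cdot(D+K)=-2$ and $-n\le D^2\le 1$. Put $\xi:=2D+K$. Then
$$\xi^2=4D^2+4D\cdot K+K^2=-8+K^2=-8+(9-k)=-(n+3),$$
so every such $\xi$ lies on the fixed quadric $\{\xi^2=-(n+3)\}$. Moreover $\xi\cdot K=2D\cdot K+K^2$ with $-n+2\le -K\cdot D\le 3$, so $\xi\cdot K$ ranges over a finite set of integers.

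\textbf{Bounding $\xi$ near $L$.} Since $L^2>0$, the Hodge index theorem gives an orthogonal decomposition $\N^1(S)=\RR L\oplus L^\perp$ with the intersection form negative definite on $L^\perp$. Suppose a wall $\xi^\perp$ contains a class $L'$ near $L$. Write $\xi=tL'+\eta$ with $\eta\in L'^\perp$; then $\xi\cdot L'=0$ forces $t=0$, so $\xi\in L'^\perp$. Hence
$$-\xi^2=n+3=\|\xi\|_{L'}^2,$$
where $\|\cdot\|_{L'}$ is the positive definite norm $-(\cdot)^2$ on $L'^\perp$. To turn this into a bound in a fixed norm, let $U$ be a compact neighbourhood of $L$ on which $L'^2>0$. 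The map sending $L'\in U$ to the quadratic form $Q_{L'}(x)=(x\cdot L')^2/L'^2-x^2$ is continuous, and each $Q_{L'}$ is positive definite because it is the norm on $L'^\perp$ plus a positive multiple of $(x\cdot L')^2$ along $\RR L'$. By compactness of $U$ and of the unit sphere, there is $c>0$ with $Q_{L'}(x)\ge c\,|x|^2$ for all $L'\in U$, where $|\cdot|$ is a fixed Euclidean norm. For $\xi\perp L'$ we have $Q_{L'}(\xi)=-\xi^2=n+3$, so $|\xi|^2\le (n+3)/c$. The integral classes $\xi$ therefore lie in a bounded set, hence a finite one, and only finitely many walls meet $U$.

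\textbf{Remarks on the argument.} The main subtlety is that $L$ may sit on the boundary of $\Nef(S)$ rather than in $\Amp(S)$. The argument does not use ampleness of $L'$, only $L'^2>0$, which is an open condition. A neighbourhood $U$ of $L$ in $\N^1(S)$ therefore works, and its intersection with $\Amp(S)$ meets only finitely many $K$-negative walls. I expect the only care needed is choosing $U$ compact inside the open set $\{x^2>0\}$ and getting the uniform constant $c$; the fact that $\xi^2$ is a fixed negative number is the key input. In fact the $K$-negativity condition on the walls is used only through the identity $D\cdot(D+K)=-2$ from Lemma~\ref{good_walls}, which is what pins $\xi^2$ to a single negative value.
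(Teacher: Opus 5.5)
Your argument is correct and is essentially the paper's: the paper writes $2D+K=-(2d+1)h+\sum(2m_i+1)e_i$ and applies Cauchy--Schwarz to $(2D+K)\cdot L=0$ together with $L^2=x^2-\sum y_i^2>0$. That is your Hodge-index bound in explicit coordinates, and it gives the concrete estimate \eqref{bound on d}, which is reused later in Lemma~\ref{line segment}.

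One slip needs fixing. As written, $Q_{L'}(x)=(x\cdot L')^2/L'^2-x^2$ equals $-\eta^2$ and vanishes on $L'$, so it is only positive semidefinite, not positive definite. Use $2(x\cdot L')^2/L'^2-x^2$ instead, or directly minimize $-x^2$ over the compact set $\{(L',x)\,:\,L'\in U,\ x\cdot L'=0,\ |x|=1\}$.
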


\bp
Write $L=xh-\sum y_ie_i$, with $x, y_i\in\RR$, and suppose that it lies on a $K$-negative stability wall $(2D+K)^\perp$ defined by a numerical rational class 
\begin{equation}\label{rewritten D}
D=(1-d)h+\sum_{i=1}^k m_ie_i, \  d, m_i\in\ZZ.
\end{equation}
After replacing $D$ with $-K-D$ if necessary, we may assume that $d\geq0$.
The conditions that $D$ is a numerical rational class and $L\in (2D+K)^\perp$ are equivalent to: 
\begin{equation}\label{L on wall}
d^2+d=\sum(m_i^2+m_i) \quad \text{ and } \quad \sum(2m_i+1)y_i=(2d+1)x \, .
\end{equation}
Using the Cauchy-Schwartz inequality, we have that
$$(2d+1)^2x^2=\big(\sum(2m_i+1)y_i \big)^2\leq 
\sum(2m_i+1)^2\cdot \sum y_i^2= (4d^2+4d+k)\cdot \sum y_i^2.
$$

Suppose that $L^2=x^2-\sum y_i^2>0$. Then the inequality $(2d+1)^2x^2\leq (4d^2+4d+k)\cdot \sum y_i^2$ above gives: 
\begin{equation}\label{bound on d}
d^2+d\leq \frac{k\cdot\sum y_i^2-x^2}{4L^2}.
\end{equation}
Hence, for a fixed $L$ with $L^2>0$, there is an open neighborhood $\cU$ of $L$ satisfying the following condition. 
Any $K$-negative stability wall $(2D+K)^\perp$ intersecting $\cU$ is defined by a numerical rational class as in \eqref{rewritten D}
with the integer $d$ is bounded. Hence the integers $m_i$ are also bounded. 
This shows that there are only finitely many $K$-negative stability walls intersecting $\cU$.
\ep

When $L\in \Nef(S)$ is a boundary point with $L^2=0$, there may be infinitely many $K$-negative stability walls through $L$. 
The next result addresses this case.

\begin{proposition}\label{locally finite 2}
Let $L\in \Pic(S)$ be a primitive class such that $L\in\Nef(S)^{K\leq0}$ and $L^2=0$.
Then, 
either $L\in\cW\cdot (h-e_1)$ and $L$ belongs to finitely many stability walls, or  $L\in\cW\cdot(3h-e_1-\ldots-e_9)$ and $L$ lies on infinitely many stability walls. 

\end{proposition}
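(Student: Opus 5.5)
First, the dichotomy itself. By Lemma~\ref{K negative nef}(4), a nonzero class $L\in \Pic(S)\cap\Nef(S)^{K\leq0}$ with $L^2=0$ is a positive multiple $mD_0$, with $D_0$ of two possible types. If $L\cdot(-K)>0$, then $D_0=\pi^*(h'-e')$ for a smooth blowdown $\pi\colon S\ra \Bl_1\PP^2$; since $k\geq 4$, such a class lies in $\cW\cdot(h-e_1)$, as noted in the proof of Corollary~\ref{FDomain}. If $L\cdot K=0$, then $D_0\in\cW\cdot(3h-e_1-\ldots-e_9)$. Both $h-e_1$ and $3h-e_1-\ldots-e_9$ are primitive, and $\cW$ acts by lattice automorphisms, so primitivity of $L$ forces $m=1$. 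The set of walls is defined intrinsically through the intersection form, which $\cW$ preserves, and $\cW$ fixes $K$. So $(2D+K)^\perp$ is a $K$-negative wall through $L$ if and only if $(2wD+K)^\perp$ is one through $wL$. We may therefore assume $L=h-e_1$ or $L=F:=3h-e_1-\ldots-e_9$. Here "stability wall through $L$" means a hyperplane $(2D+K)^\perp$ with $D$ a numerical rational class that actually contains polarizations, with $L$ on its closure.

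\textbf{Case $L=h-e_1$.} We use the parametrization from the proof of Proposition~\ref{locally finite}. Write $D=(1-d)h+\sum m_ie_i$ with $d\geq 0$ after possibly replacing $D$ by $-K-D$, so that $d^2+d=\sum(m_i^2+m_i)$. The condition $L\in(2D+K)^\perp$ becomes $(2m_1+1)=(2d+1)$, since $x=1$, $y_1=1$ and $y_i=0$ for $i\geq 2$. Hence $m_1=d$, and then $\sum_{i\geq 2}(m_i^2+m_i)=0$, which gives $m_i\in\{0,-1\}$ for $i\geq 2$. For each $d$ this leaves finitely many classes, but $d$ is not yet bounded, so we must use that the wall meets $\Amp(S)$ and contains a $K$-negative polarization. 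Lemma~\ref{good_walls}(2) gives $-n+2\leq -K\cdot D\leq 3$. We have
$$
-K\cdot D=3(1-d)+\sum m_i=3-2d+\sum_{i\geq 2}m_i ,
$$
and with $\sum_{i\geq2}m_i\leq 0$ this yields $2d\leq n+1$. So $d$ is bounded and only finitely many walls pass through $L$. An alternative route would be to note that walls through $h-e_1$ correspond to walls on $\Bl_1\PP^2$-type fibrations, but the direct count above suffices.

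\textbf{Case $L=F$.} Here $k\geq 9$, and we need infinitely many walls through $F$. Walls containing $F$ are hyperplanes $(2D+K)^\perp$ with $(2D+K)\cdot F=0$, i.e.\ $D\cdot F=-K\cdot F/2$. For $k=9$ we have $-K=F$, so this reads $D\cdot K=0$: every $(-2)$-numerical rational class works, for instance $w\cdot(e_1-e_2)$ for $w$ in the infinite stabilizer of $F$. For $k>9$ we have $F\cdot(-K)=9-9=0$ as well, because $K=-3h+\sum e_i$ gives $K\cdot F=-9+9=0$. So we need infinitely many numerical rational classes $D$ with $D\cdot F=0$ whose walls contain ample classes with $K\cdot L<0$.

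The plan is to take the stabilizer $\cW_F\subset\cW$ of $F$, which contains the infinite affine Weyl group $\tilde E_8$ acting on the first nine indices, and consider the orbit of a single $K$-negative wall through $F$ under $\cW_F$. A natural seed is $D=h-e_1-e_2-e_3$ restricted to indices $\leq 9$ (a $(-2)$-class with $D\cdot F=0$), or alternatively the numerical line $D=e_1$ paired appropriately. Each such wall meets the $K$-negative ample cone: near $F$ one can perturb to $F+\epsilon A$ with $A$ ample and $K\cdot A<0$ and solve for points on the hyperplane. Proposition~\ref{partial classif} classifies the relevant $D$ as $\cW$-translates of such seeds. The $\cW_F$-orbit of the hyperplane is infinite because $\tilde E_8$ acts on $F^\perp/\ZZ F$ with infinite orbits on roots.

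The main obstacle is checking that the translated hyperplanes remain genuine $K$-negative walls, i.e.\ still contain ample $K$-negative polarizations, and not merely that they pass through the boundary point $F$. I would first try to exhibit explicit ample classes, of the form $L_a$ from Corollary~\ref{La} suitably modified, on each translate, using $\cW_F$-invariance of the $K$-negative nef cone away from $F$.
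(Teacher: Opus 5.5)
\textbf{Case $L=h-e_1$.} Your argument is correct. It differs from the paper's only in how $d$ is bounded. You use Lemma~\ref{good_walls}(2), i.e.\ the cohomological bound $-K\cdot D\ge -n+2$. The paper instead takes an ample $A=ah-\sum b_ie_i$ on the wall and uses $A\cdot e_i>0$ and $A\cdot(h-e_1-e_i)>0$ to get $2d+1<k-1$. Both give $2d\le n+1$.

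The paper's route needs only that the wall meets $\Amp(S)$. Yours needs the wall to be $K$-negative, and you should say why. If $A$ is ample on the wall, then $L+\epsilon A$ is ample, lies on the wall, and is $K$-negative for small $\epsilon$, since $K\cdot L=-2$.

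Likewise, your reduction to $L\in\{h-e_1,F\}$ needs more than invariance of the intersection form. You need $\cW$ to preserve $\Amp(S)^{K<0}$. This holds because $\Amp(S)^{K<0}$ is the interior of $\Nef(S)^{K\le0}=(K\le0)\cap\bigcap_{e\in\cW\cdot e_1}(e\ge0)$ (Lemma~\ref{K negative nef}(1)).

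\textbf{Case $L=F$.} Here there is a genuine gap, which you name yourself but do not close. Nothing in your text shows that the hyperplanes $(2wD+K)^\perp$ contain polarizations. The suggestion ``perturb to $F+\epsilon A$ and solve'' is not an argument, because a hyperplane through the boundary point $F$ can miss $\Amp(S)$. The paper's proof handles this case in one sentence, exhibiting the classes in $\cW_9\cdot(h-e_1-e_2-e_3)$ without these checks. So your approach matches the paper's, but the step has to be supplied.

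The gap closes with one seed and $\cW_9$-invariance. Take $D=h-e_1-e_2-e_3$.
\begin{itemize}
\item For $k\ge10$: $L_{k-9}=(k-6)h-\sum e_i$ is ample and $K$-negative by Corollary~\ref{La}, and $(2D+K)\cdot L_{k-9}=0$.
\item For $k=9$: $2D+K$ lies in $K^\perp$ and is not proportional to $K$, since $D^2=-2$. By Lemma~\ref{E positive surface case}(4), $\NE(S)\cap K^\perp=\RR_{\ge0}(-K)$. Hence $\pm(2D+K)\notin\NE(S)$, so $(2D+K)^\perp$ meets $\Amp(S)$. Every ample class is $K$-negative here, because $-K$ is effective.
\end{itemize}
Since $\cW_9$ fixes $F$ and $K$ and preserves $\Amp(S)^{K<0}$, each $(2wD+K)^\perp$ with $w\in\cW_9$ is then a $K$-negative wall through $F$.

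Your reason for there being infinitely many such walls is wrong as stated. $\cW_9$ acts on $F^\perp/\ZZ F$ through the finite group $W(E_8)$, so orbits there are finite. The infinitude lives in $F^\perp$ itself:
\begin{itemize}
\item $D+F=4h-2e_1-2e_2-2e_3-e_4-\cdots-e_9$ is sent to $h-e_7-e_8-e_9$ by two Cremona moves, so $r_{D+F}\in\cW_9$.
\item Then $(r_Dr_{D+F})^m(D)=D-2mF$ for all $m\in\ZZ$.
\item Distinct classes give distinct walls up to $D\leftrightarrow -K-D$, because $(2D+K)^2=K^2-8\neq0$.
\end{itemize}

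Finally, your alternative seed $e_1$ cannot be used, since $e_1\cdot F=1\neq0$.
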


\bp
By Lemma~\ref{ample K-negative}(4) and Remark~\ref{FDomain}, either $L\in\cW\cdot (h-e_1)$ or  $L\in\cW\cdot(3h-e_1-\ldots-e_9)$.
Up to the action of $\cW$, we may assume that either $L=h-e_1$ or $L=3h-e_1-\ldots-e_9$.

Suppose $L=h-e_1$. 
Let $(2D+K)^\perp$ be a $K$-negative stability wall through $L$, defined by a numerical rational class as in \eqref{rewritten D}.
Then \eqref{L on wall} implies that $d=m_1$ and there is a decomposition $S\sqcup S^c=\{2,\ldots,k\}$ such that 
$m_i=-1$ for $i\in S$ and $m_j=0$ for $j\in S^c$.
We have:
$$2D+K=-(2d+1)h+(2d+1)e_1-\sum_{i\in S}e_i+\sum_{j\in S^c}e_j.$$  
Since $(2D+K)^\perp$ is a stability wall, $A\in(2D+K)^\perp$ for some ample line bundle $A=ah-\sum b_i e_i$, so we have
$$(a-b_1)(2d+1)=\sum_{j\in S^c}b_i-
\sum_{i\in S}b_i\leq\sum_{i=2}^k b_i.$$
Since $A$ is ample, we have $A\cdot (h-e_1-e_i)>0$ for every $i\geq2$.
This gives $a-b_1>b_i$ for every $i\geq2$, and hence $(a-b_1)(k-1)>\sum_{i=2}^kb_i$. 
It follows that $(2d+1)<k-1$ and, as before, the integers $m_i$ are also bounded. 
This implies that $L=h-e_1$ lies on finitely many stability walls.  

Assume now that $L=3h-e_1-\ldots-e_9$ (in particular, $k\geq 9$).
Then $L\cdot K=0$, and hence $L\in(2D+K)^\perp$ for any $(-2)$-numerical rational class $D\in\cW_9\cdot (h-e_1-e_2-e_3)$, where $\cW_9\subset \cW$ is the Weyl group associated with $\Bl_9\PP^2$. 
\ep

\subsection{A special chamber $\cC_0\subset \Nef(S)^{K\leq0}$}\label{subsection:C_0}

In this subsection, we examine a special stability chamber $\cC_0\subset \Nef(S)^{K\leq0}$, namely, the chamber containing the polarization 
$L=(n+1)h-\sum e_i$.
We will show that the corresponding moduli space $M_{\cC_0}$ is isomorphic to $\Bl_k\PP^n$.
More precisely, let $S$ be the blowup of $\PP^2$ at $k=n+4\geq 6$ Cremona-general points $Q_1, \dots, Q_{k}$, and let $P_1, \dots, P_{k}\in \PP^n$ be the Gale dual points. We will show that $M_{\cC_0}$ is isomorphic to the blowup of $\PP^n$ at the $k$ Cremona-general points $P_1, \dots, P_{k}$, thereby establishing Theorem~\ref{thm:moduli}.

\begin{notn}\label{L_a}
Consider the following family of $\QQ$-divisor classes:
\[
L_a:=-K+ah=(a+3)h-\sum_{i=1}^{k} e_i \quad \text{ with } \quad  a\in\QQ_{\geq0}.
\]
\end{notn}

Note that $L_a\in \Amp(S)^{K<0}$  for $a>\max\{\frac{k}{3}-3,0\}$ by Corollary~\ref{La}.
By Lemma \ref{sstable vb}, $M_{L_a}=\emptyset$ if $a>n+1$.
We will determine all the stability walls crossed by the segment $\big\{L_a\,|\, \max\{\frac{k}{3}-3,0\}< a \leq n+1\big\}$ as $a$ decreases from $n+1$ to $\max\{\frac{k}{3}-3,0\}$. 

\begin{lemma}\label{line segment}
The line segment $\big\{L_a\,|\, \max\{\frac{k}{3}-3,0\}<a \leq n+1=k-3\big\}\subset \Nef(S)^{K\leq 0}$ intersects the stability wall $(2D+K)^\perp$ for the following numerical rational classes $D$, and at the following values of $a$:
\bi
\item $D=h-\sum_{i\in T}e_i$, for each subset $T\subset \{1, \dots, k\}$ with $|T|<\min\{\frac{k}{3},\frac{k-3}{2}\}$, at $a=k-3-2|T|$.
\item $D=e_i$, for $i\in \{1, \dots, k\}$, at $a=\frac{k-7}{3}$ ($k\geq8$).
\ei
\end{lemma}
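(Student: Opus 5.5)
We parametrize a general $K$-negative wall through a point of the segment and reduce to a Diophantine problem. As in the proof of Proposition~\ref{locally finite}, we write a numerical rational class as $D=(1-d)h+\sum_{i=1}^k m_ie_i$, replacing $D$ by $-K-D$ if needed so that $d\geq 0$. Then the conditions read
\[
d^2+d=\sum_{i}(m_i^2+m_i), \qquad (2D+K)\cdot L_a=0 .
\]
With $L_a=(a+3)h-\sum e_i$, the class $2D+K=-(2d+1)h+\sum(2m_i+1)e_i$, so the wall condition becomes
\[
(2d+1)(a+3)=\sum_i(2m_i+1),\quad\text{equivalently}\quad (2d+1)a=2\sum_i m_i +k-6d-3 .
\]

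The key step is to bound $d$. Put $x_i=2m_i+1$, which are odd integers. The first equation says $\sum x_i^2=(2d+1)^2+k-1$. Cauchy--Schwarz gives $(2d+1)^2(a+3)^2=(\sum x_i)^2\le k\big((2d+1)^2+k-1\big)$. On our segment $a+3>\max\{k/3,3\}$, so $(a+3)^2>k$ whenever $k\geq 9$, and this bounds $d$.

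To pin down the solutions, the sharper tool is $\sum x_i(x_i-1)\ge 0$ together with the vanishing conditions. Each $x_i\in\{\pm1\}$ contributes $0$ to $\sum(x_i^2-1)$, while $|x_i|\ge3$ contributes at least $8$. This forces most $m_i\in\{0,-1\}$. Concretely:
\begin{itemize}
\item If $d=0$, then all $m_i\in\{0,-1\}$. Setting $T=\{i: m_i=-1\}$ gives $D=h-\sum_{i\in T}e_i$, and the linear equation gives $a=k-3-2|T|$.
\item If $d=1$, then $\sum(m_i^2+m_i)=2$, so exactly one $m_i\in\{1,-2\}$ and the rest lie in $\{0,-1\}$. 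Solving the linear equation for $a$ and imposing $\max\{k/3-3,0\}<a\le k-3$, the only surviving case should be $D=e_i$ with $a=(k-7)/3$. For this, one may also have to use $-K-D$ symmetry and the requirement that the wall meets $\Amp(S)^{K<0}$, so that $h^0(K-D)=0$ and $h^0(2D+K)=0$ as in the Remark after the definition of numerical rational classes. This kills the classes with extra $e_j$'s.
\item For $d\ge2$, I would combine the Cauchy--Schwarz bound with the constraint $a>k/3-3$ to show there are no solutions. For $k\le 8$ I would instead use Proposition~\ref{partial classif} and Noether's inequality.
\end{itemize}

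Finally we translate the range conditions. Requiring $a=k-3-2|T|>\max\{k/3-3,0\}$ yields exactly $|T|<\min\{k/3,(k-3)/2\}$. Requiring $a=(k-7)/3>0$ gives $k\ge8$. For the converse inclusion, we check that each listed $D$ really is numerical rational with $-n\le D^2\le1$, and that $L_a$ is ample by Corollary~\ref{La}, so that each listed hyperplane is a genuine $K$-negative wall.

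The main obstacle I expect is the case $d\ge2$ (and $d=1$ with larger $|m_i|$). There a naive Cauchy--Schwarz bound is not sharp near $a\approx k/3-3$, so I would try to exploit integrality and parity of the $x_i$, or use the effectivity obstruction on $K-D$, to rule out the spurious solutions.
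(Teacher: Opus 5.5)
Your setup (normalizing to $d\ge 0$, the equations $d^2+d=\sum(m_i^2+m_i)$ and $(2d+1)(a+3)=\sum(2m_i+1)$, and the cases $d=0,1$) is the same as the paper's, and it is correct. For $d=1$ the range condition alone suffices: $m_1=1$ gives $3(a+3)=k+2-2|T|>k$, forcing $T=\emptyset$, and $m_1=-2$ gives $3(a+3)=k-4-2|T|<k$. No vanishing of $h^0(K-D)$ or $h^0(2D+K)$ is needed.

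The gap is that the real content of the lemma, excluding $d\ge 2$, is not carried out. Moreover, the bound on $d$ you assert fails exactly where it is needed. Your inequality is $(2d+1)^2\big((a+3)^2-k\big)\le k(k-1)$.
\begin{itemize}
\item For $k\ge 10$ it bounds $d$, because $(a+3)^2-k>k(k-9)/9$.
\item For $k\le 8$ it also bounds $d$, because $(a+3)^2-k>9-k$. So your detour through Proposition~\ref{partial classif} for $k\le 8$ is unnecessary, and as sketched it would not decide which Weyl-orbit elements actually meet the segment.
\item For $k=9$ the factor $(a+3)^2-9$ tends to $0$ as $a\to 0^+$, so no bound on $d$ follows.
\end{itemize}
The paper fixes this with integrality: $u=(2d+1)a$ is a positive integer, so $u^2+6(2d+1)u\le 72$ and $d\le 5$. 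It then lists all possible patterns of $(m_i)$ for each $d\in\{2,\dots,5\}$. Each is checked against the linear equation using \emph{both} $a>k/3-3$ and $a>0$; for example, for $d=2$ one gives $k<9$ and the other $k>9$.

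Here is a short way to complete your argument along the lines you hint at.
\begin{itemize}
\item For $k=9$, $\sum(2m_i+1)$ and $3(2d+1)$ are both odd, and the first exceeds the second. Hence $\sum(2m_i+1)\ge 3(2d+1)+2$, and Cauchy--Schwarz gives $12(2d+1)+4\le 72$, i.e.\ $d\le 2$. Together with the other cases, $d\le 3$ for every $k$.
\item Next, $2m_i\le m_i^2+m_i$ for every integer $m_i$, so $(2d+1)(a+3)\le k+d^2+d$.
\item Combined with $a+3>k/3$ and $a+3>3$, this gives $k<\frac{3d(d+1)}{2(d-1)}$ and $k>5d+3-d^2$. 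For $d=2$ and for $d=3$ these become $k<9$ and $k>9$, a contradiction.
\end{itemize}
Without an argument of this kind, your proposal only shows that the listed walls are met, not that they are the only ones.
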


\bp
In order to determine all the stability walls $(2D+K)^\perp$ that contain some polarization $L_a$ as above, 
we proceed as in the proof of Proposition~\ref{locally finite}. Write
$$
D=(1-d)h+\sum m_i e_i \quad \text{ with } \quad d\geq0 \, . \quad 
$$
The conditions that $D$ is a numerical rational class and $L_a\in (2D+K)^\perp$ are equivalent to: 
\begin{equation}\label{L on wall - special case}
d^2+d=\sum(m_i^2+m_i) \quad \text{ and } \quad (2d+1)(a+3)=\sum(2m_i+1) \, .
\end{equation}
In the present case, inequality (\ref{bound on d}) becomes:
\begin{equation}\label{bound on d - special case}
d^2+d\leq \frac{k^2-(a+3)^2}{4\big((a+3)^2-k\big)}.
\end{equation}

The idea of the proof is to use (\ref{bound on d - special case}) to bound $d$ from above in terms of $a$. We make this bound explicit in our range of interest, that is, for $\max\{\frac{k}{3}-3,0\}< a\leq k-3$. Subsequently, for a given $d$, we determine all the possibilities for the integers $m_i$ using the relation $d^2+d=\sum (m_i^2+m_i)$. 

Letting $x=a+3$ in the right hand side of (\ref{bound on d - special case}) gives the function 
$$f(x)=\frac{k^2-x^2}{4\big(x^2-k\big)}.$$ 
The case $k=9$ will be analized separately and we will prove that $d\leq5$. If $k\neq 9$, the function $f(x)$
is decreasing on the interval $\max\{\frac{k}{3},3\}\leq x\leq k$. If $k\geq10$ the maximum is obtained at $x=\frac{k}{3}$, so we have $d^2+d< f\big(\frac{k}{3}\big)=2+\frac{18}{k-9}$. Similarly, if $k\in\{6,7,8\}$ the maximum is obtained at $x=3$. This gives the following bounds on $d$: 
\bi
\item $d\in\{0, 1\}$ if $k\geq14$, or $k\in\{6, 7\}$,
\item $d\in\{0, 1, 2\}$ if $k\in\{11, 12, 13\}$, 
\item $d\in\{0, 1, 2, 3\}$ if $k\in\{8,10\}$, 
\item $d\in\{0, 1, 2, 3, 4, 5\}$ if $k=9$. %and $a>0$.
\ei

Consider the case $k=9$. We are interested in the stability walls $(2D+K)^\perp$ that the line $L_a$ intersects for $0< a\leq 6$. 
Set $l_i:=2m_i+1$. By (\ref{L on wall - special case}) we have 
$$\sum l_i=(2d+1)(a+3),\quad \sum l_i^2=4d^2+4d+9.$$
By the Cauchy-Schwartz inequality $(\sum l_i)^2\leq 9\sum l_i^2$ we obtain:
\begin{equation}\label{k=9 bound a}
(a+3)^2\leq \frac{9(4d^2+4d+9)}{4d^2+4d+1}=9+\frac{72}{4d^2+4d+1}.
\end{equation}
By (\ref{L on wall - special case}) we have
$a\in\frac{1}{(2d+1)}\ZZ$, so we may write $a=\frac{u}{2d+1}$, for some $u\in\ZZ$, $u>0$. Substituting $a=\frac{u}{2d+1}$ in (\ref{k=9 bound a}) we obtain $u^2+6(2d+1)u\leq72$. As $u>0$ is an integer, it follows that $6(2d+1)<72$, so $d\leq5$. 

Suppose now $k\geq6$ is arbitrary. For $d\in\{0, 1, 2, 3, 4, 5\}$ we determine the possible values of $a$ and $D$ from (\ref{L on wall - special case}). In what follows we denote 
$T:=\{\,i\, |\, m_i=-1\}\subseteq \{1,\ldots,k\}$, $R:=\{\,i\, |\, m_i=0\}\subseteq \{1,\ldots,k\}$. 

\underline{Case $d=0$}. Then $m_i\in\{0,-1\}$ for all $i$ and
$$D=h-\sum_{i\in T}e_i,\quad a=k-3-2|T|
.$$
Since $\max\{\frac{k}{3}-3,0\}< a\leq k-3$ we must have 
$|T|<\min\{\frac{k}{3},\frac{k-3}{2}\}$. 

\underline{Case $d=1$}. Up to relabeling, it follows that $m_1\in\{1,-2\}$ and $m_i\in\{0,-1\}$ for all $i\neq1$, so 
$\{2,\ldots,k\}=T\sqcup R$.
Consider first the case $m_1=1$. Then 
$D=e_1-\sum_{i\in T}e_i$ and by (\ref{L on wall - special case}) we have that $3(a+3)=3+|R|-|T|=k+2-2|T|$.
Since $a>\frac{k}{3}-3$, it follows that $T=\emptyset$. The condition $a>0$ implies that $k\geq8$. Assume now that $m_1=-2$. Similarly, we obtain that 
$3(a+3)=-3+|R|-|T|=k-4-2|T|$, but this is a contradiction with $a>\frac{k}{3}-3$. 

\underline{Case $d=2$ ($k\in\{8,\ldots, 13\}$)}. Up to relabeling, it follows that either (a) $m_1\in\{2,-3\}$ and $m_i\in\{0,-1\}$ for all $i\neq1$, or (b) $m_1, m_2, m_3\in\{1,-2\}$ and $m_i\in\{0,-1\}$ for all $i\neq1,2,3$. 
Consider the case (a). 
By (\ref{L on wall - special case}) we have $5(a+3)=(2m_1+1)+|R|-|T|=2m_1+k-2|T|\leq k+4-2|T|$. 
But this is impossible when $\max\{\frac{k}{3}-3,0\}< a$. 
Now consider the case (b). By (\ref{L on wall - special case}) we have $5(a+3)=\sum_{i=1,2,3}(2m_i+1)+|R|-|T|=2\sum_{i=1,2,3}m_i+k-2|T|\leq k+6-2|T|$. Again, this is impossible when $\max\{\frac{k}{3}-3,0\}< a$.
%It follows that $k=9$, $a=0$, $D=-h+e_1+e_2+e_3$, a numerical rational class with $D^2=-2$. 

\underline{Case $d=3$ ($k\in\{8,9,10\}$)}. Up to relabeling, one of the following holds:
\bi
\item[(a) ] $m_1\in\{3,-4\}$ and $m_i\in\{0,-1\}$ for all $i>1$,
\item[(b) ] $m_1, m_2\in\{2,-3\}$ and $m_i\in\{0,-1\}$ for all $i>2$,
\item[(c) ] $m_1\in\{2,-3\}$, $m_2, m_3, m_4\in\{1,-2\}$ and $m_i\in\{0,-1\}$ for all $i>4$.
\item[(d) ] $m_1,\ldots, m_6\in\{1,-2\}$ and $m_i\in\{0,-1\}$ for all $i>6$.
\ei

By (\ref{L on wall - special case}) we have $7(a+3)=2(\sum m_i)+k$. As $a>0$, $k\leq10$, it follows that $2\sum m_i>21-k\geq 11$, i.e., $\sum m_i\geq6$. This can only happen in case (d) when $k=10$, $m_1=\ldots=m_6=1$, $m_i=0$ for all $i>6$. But in this case $7(a+3)=2(\sum m_i)+k=22$ and 
the condition $\frac{1}{3}=\frac{k}{3}-3< a$ leads to a contradiction. 

\begin{comment}
Consider case (a). By (\ref{L on wall - special case}) we have $7(a+3)=(2m_1+1)+|R|-|T|=2m_1+k-2|T|\leq k+6-2|T|$. Similarly, in case (b) we have
$7(a+3)=\sum_{i=1,2}(2m_i+1)+|R|-|T|=2\sum_{i=1,2}m_i+k-2|T|\leq k+8-2|T|$, while in case (c) we obtain 
$7(a+3)=\sum_{i=1}^4(2m_i+1)+|R|-|T|=2\sum_{i=1,2}m_i+k-2|T|\leq k+10-2|T|$. All these cases are impossible when  $\frac{k}{3}-3< a$. In case (d), we have
$7(a+3)=\sum_{i=1}^6(2m_i+1)+|R|-|T|=2\sum_{i=1}^6m_i+k-2|T|\leq k+12-2|T|$. The condition $\frac{k}{3}-3< a$ implies that $k<9$, while $a>0$ implies $k>9$, so we have a contradiction. 
\end{comment}

From the bounds on $d$ above, we may assume in the remaining cases that $k=9$. 

\underline{Case $d=4$ ($k=9$)}. 
Up to relabeling, one of the following holds:
\bi
\item[(a) ] $m_1\in\{4,-5\}$ and $m_i\in\{0,-1\}$ for all $i>1$,
\item[(b) ] $m_1\in\{3,-4\}$, $m_2\in\{2,-3\}$, 
$m_3\in\{1,-2\}$ and $m_i\in\{0,-1\}$ for all $i>3$,
\item[(c) ] $m_1\in\{3,-4\}$, $m_2,\ldots, m_5\in\{1,-2\}$ and $m_i\in\{0,-1\}$ for all $i>5$,
\item[(d) ] $m_1, m_2, m_3\in\{2,-3\}$, 
$m_4\in\{1,-2\}$ and $m_i\in\{0,-1\}$ for all $i>4$,
\item[(e) ] $m_1, m_2\in\{2,-3\}$, 
$m_3,\ldots, m_6\in\{1,-2\}$ and $m_i\in\{0,-1\}$ for all $i>6$,
\item[(f) ] $m_1\in\{2,-3\}$, 
$m_2,\ldots, m_8 \in\{1,-2\}$ and 
$m_9\in\{0,-1\}$.
%$m_i\in\{0,-1\}$ for all $i>8$,
%\item[(f) ] $m_1,\ldots, m_{10} \in\{1,-2\}$ ($k=10$). 
\ei
Using similar arguments, the condition $0<a$ leads to a contradiction in all cases.

\underline{Case $d=5$ ($k=9$)}. 
Up to relabeling, one of the following holds:
\bi
\item[(a) ] $m_1\in\{5,-6\}$ and $m_i\in\{0,-1\}$ for all $i>1$,
\item[(b) ] $m_1\in\{4,-5\}$, $m_2\in\{2,-3\}$, 
$m_3, m_4\in\{1,-2\}$ and $m_i\in\{0,-1\}$ for all $i>4$,
\item[(c) ] $m_1, m_2\in\{3,-4\}$, 
$m_3\in\{2,-3\}$ and $m_i\in\{0,-1\}$ for all other $i>3$,
\item[(d) ] $m_1, m_2\in\{3,-4\}$, 
$m_3, m_4, m_5\in\{1,-2\}$ and $m_i\in\{0,-1\}$ for all other $i>5$,
\item[(e) ] $m_1\in\{3,-4\}$, 
$m_2, m_3, m_4\in\{2,-3\}$ and $m_i\in\{0,-1\}$ for all $i>4$,
\item[(f) ] $m_1\in\{3,-4\}$, 
$m_2, m_3\in\{2,-3\}$, $m_4, m_5, m_6\in\{1,-2\}$ and $m_i\in\{0,-1\}$ for all $i>6$,
\item[(g) ] $m_1\in\{3,-4\}$, 
$m_2\in\{2,-3\}$, $m_3,\ldots, m_8\in\{1,-2\}$ and $m_9\in\{0,-1\}$,
\item[(h) ] $m_1, \ldots, m_5\in\{2,-3\}$ and $m_i\in\{0,-1\}$ for all $i>5$,
\item[(i) ] $m_1, \ldots, m_4\in\{2,-3\}$, $m_5, m_6, m_7\in\{1,-2\}$ and $m_8, m_9\in\{0,-1\}$, 
\item[(j) ] $m_1, m_2, m_3\in\{2,-3\}$ and $m_i\in\{1,-2\}$ for all $i>3$. 
\ei
Similar arguments lead to a contradiction in all cases. 
\ep

We are ready to prove Theorem~\ref{thm:moduli}. In fact, we prove the following more precise version.

\begin{thm}\label{X as moduli}
Let $S$ be the blowup of $\PP^2$ at $n+4\geq 6$ Cremona-general points $Q_1, \dots, Q_{n+4}$, and let $P_1, \dots, P_{n+4}\in \PP^n$ be the Gale dual points.
Consider the ample line bundle $L_a=(a+3)h-\sum_{i=1}^{n+4} e_i$, with $a>\max\{\frac{n+4}{3}-3,0\}$, and denote by 
$M_{L_a}$ the moduli space of $L_a$-semistable torsion free sheaves with (\ref{data}) (see Notation~\ref{not:M_L}).
Then 
\bi
\item $M_{L_a}=\emptyset$ for $a>n+1$.
\item $M_{L_a}\cong \PP^{n}$ for $n-1< a \leq  n+1$. 
\item $M_{L_a}\cong \Bl_{P_1,\ldots, P_{n+4}}\PP^{n}$ for 
$\max\big\{n-3,0\big\}< a\leq n-1$. 
\item If $n>3$ then, for $\max\big\{n-5, \frac{n-3}{3}\big\}< a\leq n-3$, $M_{L_a}$ is obtained from $\Bl_{P_1,\ldots, P_{n+4}}\PP^{n}$ by flipping the strict transforms of all the lines through two of the points $P_i$'s. 
\ei
\end{thm}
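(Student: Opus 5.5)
We follow the segment $\{L_a\}$ as $a$ decreases from large values, using Lemma~\ref{line segment} to locate every wall crossed and Theorem~\ref{prop:special_loci} to describe each crossing. With $k=n+4$, the walls met for $\max\{\frac{k}{3}-3,0\}<a\le n+1$ are $D=h-\sum_{i\in T}e_i$ at $a=n+1-2|T|$, and $D=e_i$ at $a=\frac{n-3}{3}$. Throughout we take $D$ to be the class with $(2D+K)\cdot L_a>0$ for $a$ larger than the wall value. Indeed, $(2h+K)\cdot L_a=2a-2(n+1)$ changes sign from positive to negative as $a$ decreases through $n+1$. So the larger side corresponds to $\cC^-$ for the class $-K-D$, and to $\cC^+$ for $D$.

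For $a>n+1$ we invoke Lemma~\ref{sstable vb} with $h'=h$, which gives emptiness. At $a=n+1$ the wall with $D=h$ and $D^2=1$ is crossed. By Theorem~\ref{prop:special_loci}, for $a$ just below $n+1$ we get $M=\PP_{-K-h}=\PP\HH^1(-2h-K)$. Here $-2h-K=h-\sum e_i$, so this is $\PP H^1(\cO_S(h-e))\cong\PP^n$ by Proposition~\ref{prop:Mukai_Gale}(1). Next we treat the moduli space \emph{on} the walls, using Proposition~\ref{reduction map}(2). At $a=n+1$ itself $M_{L_a}$ is empty; the case $D^2=1$ has to be checked directly, presumably by Lemma~\ref{sstable vb} together with a Jordan--H\"older argument. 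For the wall values $a=n+1-2|T|$ we get $D^2=1-|T|$. Since $1-|T|>\frac{1-n}{2}$ as long as $|T|<\frac{n+1}{2}$, the closed endpoint at the wall value agrees with the chamber just below it. This accounts for the half-open intervals $(\cdot,\cdot]$ in the statement.

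At $a=n-1$ we cross the $k$ walls with $T=\{i\}$, so $D=h-e_i$ and $D^2=0$. By Theorem~\ref{prop:special_loci}, with the roles arranged as above, the moduli space below is the blowup of $\PP^n$ at the point $\PP_{h-e_i}$, with exceptional divisor $\PP_{-K-h+e_i}\cong\PP^{n-1}$. We must identify this point. It is the unique nonsplit extension $0\to\cO(h-e_i)\to E\to\cO(-K-h+e_i)\to0$, which after twisting by $-h$ has a subsheaf $\cO(-e_i)$. By Proposition~\ref{prop:Mukai_Gale}(2) this is exactly $P_i$. The $k$ walls are distinct hyperplanes through the same point of the segment, and the loci $\PP_{h-e_i}$ are disjoint by Remark~\ref{unique_extension2}. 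We would argue that the blowups commute, for instance by a small perturbation of the segment into a general path that crosses the walls one at a time. This gives $M\cong\Bl_{P_1,\dots,P_k}\PP^n$ on $(\max\{n-3,0\},n-1]$.

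At $a=n-3$ (for $n>3$) we cross the walls with $D=h-e_i-e_j$, where $D^2=-1$. Each such crossing replaces $\PP_{-K-D}$ by $\PP_D\cong\PP^1$. By Proposition~\ref{prop:Mukai_Gale}(3), $\PP_{-K-D}$ is the line $\ell_{ij}$, or more precisely its strict transform in the blowup, which is disjoint from the other such lines. Remark~\ref{flips&flops} says this is a flip-type small modification, and it swaps $\PP^1$ and $\PP^{n-2}$ with the normal bundles of Theorem~\ref{prop:special_loci}. No other walls are met until $\max\{n-5,\frac{n-3}{3}\}$, by Lemma~\ref{line segment}.

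The main obstacle is simultaneous wall crossing at a single value of $a$. Theorem~\ref{prop:special_loci} treats a single wall, so we must justify perturbing to a generic path and show the resulting modifications are independent, that is, that the exceptional loci stay disjoint and keep their normal bundles. The second obstacle is matching the abstract loci $\PP_D$ with the concrete geometry of $P_i$ and $\ell_{ij}$ through Gale duality. We would rely on Proposition~\ref{prop:Mukai_Gale}, applied inside the universal family.
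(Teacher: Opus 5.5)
Your route is the paper's own. You use Lemma~\ref{sstable vb} for emptiness, Lemma~\ref{line segment} for the walls, Theorem~\ref{prop:special_loci} and Proposition~\ref{reduction map} for the crossings and the on-wall spaces, and Proposition~\ref{prop:Mukai_Gale} for the centres. Your perturbation argument for the simultaneous crossings at $a=n-1$ and $a=n-3$ fills in a point the paper leaves implicit. However, two steps as written are wrong.

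\textbf{The sign convention.} In fact $(2h+K)\cdot L_a=(n+4)-(a+3)=n+1-a$, not $2a-2(n+1)$. More generally, $\big(2(h-\sum_{i\in T}e_i)+K\big)\cdot L_a=n+1-2|T|-a$. So for every wall $D=h-\sum_{i\in T}e_i$ on the segment, the side with larger $a$ is $\cC^-$ and the side with smaller $a$ is $\cC^+$, which is the opposite of your stated convention.

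Your conclusions at $a=n+1$, at $a=n-1$, and for the closed endpoints are correct only because you silently used the right orientation there. At $a=n-3$ your stated convention leads you astray. With $D=h-e_i-e_j$, the chamber $n-3<a<n-1$ (where $M\cong\Bl_{P_1,\dots,P_{n+4}}\PP^n$) is $\cC^-$, and it contains $\PP_D\cong\PP^1$. By Proposition~\ref{prop:Mukai_Gale}(3) this is the strict transform of $\ell_{ij}$, since after twisting by $-h$ these sheaves contain $\cO(-e_i-e_j)$. Crossing the wall replaces this line by $\PP_{-K-D}\cong\PP^{n-2}$. Your identification of $\PP_{-K-D}$ with the line cannot hold, because $\PP_{-K-D}\cong\PP^{n-2}$ and $n>3$.

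\textbf{The claim $M_{L_{n+1}}=\emptyset$.} This is false, and it contradicts the second bullet, which includes $a=n+1$. Lemma~\ref{sstable vb} only gives $(2h+K)\cdot L\ge 0$, and this holds with equality at $a=n+1$.

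The correct argument runs as follows:
\begin{itemize}
\item $L_{n+1}$ lies only on the wall $(2h+K)^\perp$.
\item There are no $\mu_{L_{n+1}}$-stable sheaves, since they would be stable in the empty chamber $a>n+1$.
\item By the argument of the Claim in Proposition~\ref{reduction map}, every $L_{n+1}$-semistable sheaf is therefore $\cO(h)\oplus\cO(-K-h)$, or a nonsplit extension in $\PP_h=\emptyset$ or in $\PP_{-K-h}$.
\item We have $\chi(E)/2=\frac{5-n}{2}$, $\chi(\cO(h))=3$ and $\chi(\cO(-K-h))=2-n$. Hence the split sheaf is Gieseker-unstable.
\item Every nonsplit extension $0\to\cO(-K-h)\to E\to\cO(h)\to0$ is $L_{n+1}$-stable. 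Indeed, $\Hom(\cO(h),\cO(-K-h))=H^0(h-\sum e_i)=0$, so any nonzero map $\cO(h)\to E$ would split the sequence.
\end{itemize}
Thus $M_{L_{n+1}}=\PP_{-K-h}\cong\PP^n$. This is exactly the $\chi$-comparison of Proposition~\ref{reduction map}(2) for $D^2>\frac{1-n}{2}$, extended to $D^2=1$. Equivalently, it is your own rule that the closed endpoint agrees with the chamber just below, applied also at $a=n+1$.
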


\bp 
By Lemma \ref{sstable vb}, we have $M_{L_a}=\emptyset$ if $a>n+1$.
Lemma \ref{line segment} determines for which values of $a$ the polarization $L_a$ lies on stability walls $(2D+K)^\perp$ for $\max\big\{n-3,0\big\}< a \leq n+1$ 
and also describes the numerical rational classes $D$ defining these walls:
\begin{itemize}
    \item $a=n+1$ and $D=h$;
    \item $a=n-1$ and $D=h-e_i$ for $i\in\{1,\ldots,n+4\}$;
    \item $a=n-3$ and $D=h-e_i-e_j$ for distinct $i, j\in\{1,\ldots,n+4\}$ if $n\geq 3$. 
\end{itemize}
Moreover, if $n>3$ then no wall is crossed for $\max\big\{n-5, \frac{n-3}{3}\big\}< a < n-3$.

The description of the moduli spaces follows from Theorem~\ref{prop:special_loci} and Proposition~\ref{reduction map}. 
The facts that the centers of the blowups are precisely the Gale dual points of $\{Q_1, \dots, Q_{n+4}\}\subset \PP^2$, and that the flipping loci are the strict transforms of the lines through two of these points, follow from Proposition~\ref{prop:Mukai_Gale}.

\ep

\begin{rmk}\label{walls_of_C0}
When $n\geq 3$, one can check that the stability chamber $\cC_0$ containing the polarization 
$$
L_{n-2}=(n+1)h-\sum e_i
$$ 
is given by 
$$
    \cC_0 \ = \  \bigcap_{i=1}^{n+4}\Big(2(h-e_i)+K>0\Big)\cap\bigcap_{i\neq j}\Big(2(h-e_i-e_j)+K <0\Big).
$$
This can be shown directly using arguments similar to those used above. 

However, the above description of $\cC_0$ \color{black}  also follows as a corollary of the results proved in the next sections. Namely, in Section~\ref{section:determinant} we construct a linear isomorphism $\rho:\N^1(S)\to \N^1(\Bl_k\PP^n)$ which maps $\ov{\cC_0}$ onto $\Nef(\Bl_{n+4}\PP^n)$ (see Theorem~\ref{main}).
The walls of $\Nef(\Bl_{n+4}\PP^n)$ correspond to extremal rays of the Mori cone $\NE(\Bl_{n+4}\PP^n)$ of  $\Bl_{n+4}\PP^n$. These are well known: they are generated by the classes of lines on the exceptional divisors over the blown-up points and by the classes of the strict transforms of the lines through two of the points (see, for instance, \cite[Proposition 1.4]{AM16}). In $\N^1(S)$, these correspond via $\rho^{-1}$ precisely to the walls $\big(2(h-e_i)+K\big)^\perp$ and $\big(2(h-e_i-e_j)+K\big)^\perp$, respectively.

When $n=2$, there are no $K$-negative walls intersecting the interior of $\Pi$, i.e., $\Pi$ is the closure of the chamber $\cC_0$. One can check that in this case we have
$$
    \Pi \ = \  \bigcap_{i=1}^{6}\Big(2(h-e_i)+K\geq0\Big)\cap\bigcap_{i\neq j}\Big(2(h-e_i-e_j)+K \leq0\Big)\cap
    \bigcap_{i=1}^6\Big(2e_i+K\leq0\Big).
$$

\end{rmk}

\subsection{Two special subcones $\Pi\subset \cE\subset \Nef(S)^{K\leq0}$}\label{subsection:special_cones}

In this subsection we describe two special subcones that contain the
special chamber $\cC_0$ studied in Subsection~\ref{subsection:C_0}:
\begin{itemize}
    \item Let $\cE\subset \Nef(S)^{K\leq0}$ be the closure of the union of the $K$-negative chambers $\cC\subset \Amp(S)$ for which $M_{\cC}\neq \emptyset$.
    \item Let $\Pi\subset \Nef(S)^{K\leq0}$ be the closure of the union of the $K$-negative chambers $\cC\subset \Amp(S)$ for which $M_{\cC}$ is a small modification of $M_{\cC_0}\cong \Bl_{n+4}\PP^n$.
\end{itemize}
From their definition, it is clear that $\cC_0\subset \Pi\subset \cE\subset \Nef(S)^{K\leq0}$. Our goal in this subsection is to describe these cones explicitly, in terms of their defining inequalities and, in the case of $\cE$, to determine all of its extremal rays in the half-space $\N^1(S)^{K<0}$. 
It will follow from our description that both cones are invariant by the action of the Weyl group $\cW$.

Let $\cC\subset \Amp(S)$ be a $K$-negative stability chamber, with corresponding moduli space $M_{\cC}$.
By Theorem~\ref{prop:special_loci}, $M_{\cC}\neq \emptyset$ if and only if $\cC\subset \big(2B+K\geq0\big)$ for every numerical cubic $B$ defining a $K$-negative wall. By Proposition~\ref{partial classif}, these classes $B$ are precisely the ones in $\cW\cdot h$.
Recall from Lemma~\ref{ample K-negative} that 
\[
\Nef(S)^{K\leq0}=\big(K\leq0\big)\cap\bigcap_{e\in \cW\cdot e_1}\big(e\geq0\big).
\]
Therefore, the cone $\cE$ can be described as follows. 
\begin{equation}\label{coneE}
    \cE \ = \  \big(K\leq0\big)\cap\bigcap_{e\in \mathcal \cW\cdot e_1}\big(e\geq0\big)\cap\bigcap_{B\in \cW\cdot h}\big(2B+K\geq0\big) \ \subset \ \Nef(S)^{K\leq0}.
\end{equation}

In order to describe $\Pi$, let $(2D+K)^{\perp}$ be a $K$-negative wall separating two neighbouring chambers $\cC^-\subset (2D+K)^{<0}$ and $\cC^+\subset (2D+K)^{>0}$. 
By Theorem~\ref{prop:special_loci}, $M_{\cC^-}$ and $M_{\cC^+}$ are birational to each other if and only if $-n+1 \leq D^2 \leq 0$, and
$M_{\cC^-}$ and $M_{\cC^+}$ are small modifications of each other if and only if $-n+2 \leq D^2 \leq -1$.
Moreover, $M_{\cC^+}$ is isomorphic to the blowup of $M_{\cC^-}$ at one point if and only if $D^2=0$. 
By Proposition~\ref{partial classif}, this is equivalent to $D\in \cW\cdot (h-e_i)$.
Similarly, $M_{\cC^-}$ is isomorphic to the blowup of $M_{\cC^+}$ at one point if and only if $(-K-D)\in \cW\cdot (h-e_i)$.
Using again the above description of $\Nef(S)^{K\leq0}$, we conclude that the cone $\Pi$ can be described as follows. 
\begin{equation}\label{conePi}
\Pi \ = \  \big(K\leq0\big)\cap\bigcap_{e\in \mathcal \cW\cdot e_1}\big(e\geq0\big)\cap\bigcap_{C\in \cW\cdot (h-e_1)}\big(2C+K\geq0\big) \ \subset \ \Nef(S)^{K\leq0}.
\end{equation}

Notice that any class $C \in \cW\cdot (h-e_1)$ spans an extremal ray of the cone $\Nef(S)$, hence, also of $\cE$. In particular, the cone $\cE$ contains the (closure of the) cone spanned by $\cW\cdot (h-e_1)$. 
Our next goal is to prove that $\cE$ coincides with the closure of this cone.
For this purpose, we introduce the following cones:
\[
\cE'\coloneqq \sum_{C\in \cW\cdot (h-e_1)}\RR_{\geq 0}C \quad \subset  \quad  \ov{\cE'} \quad \subset  \quad \Nef(S)^{K\leq0} \ .
\]

\begin{lemma} \label{Lemma_E}
Let the notation be as above. 
Then  the following holds:
\begin{itemize}
\item[(1) ] $\cE \, =\,  \ov{\cE'}$,
\item[(2) ] $\cE'\setminus\{0\}=\cE^{K<0}$,
\end{itemize}
\end{lemma}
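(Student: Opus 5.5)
The inclusion $\ov{\cE'}\subseteq\cE$ holds already, since every $C\in\cW\cdot(h-e_1)$ lies on an extremal ray of $\Nef(S)$ and satisfies the defining inequalities \eqref{coneE}; $\cE$ is closed. So the content is the reverse inclusion, and statement (1) will follow from (2) by taking closures. Indeed, $\cE^{K<0}$ is dense in $\cE$, because $\cE$ contains $-K+ah$-type classes $L_a$ with $K\cdot L_a<0$ and is convex. Hence the plan is to prove (2): every nonzero $L\in\cE$ with $K\cdot L<0$ is a finite nonnegative combination of classes in $\cW\cdot(h-e_1)$. The converse inclusion $\cE'\setminus\{0\}\subseteq\cE^{K<0}$ is immediate from $(h-e_1)\cdot(-K)=2>0$ and $\cW$-invariance of $K$.

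To prove (2), I would show that the extremal rays of $\cE$ lying in $(K<0)$ are exactly the rays $\RR_{\geq0}C$ with $C\in\cW\cdot(h-e_1)$. Then I would show that $\cE^{K\leq -\varepsilon}$, meaning $\cE$ intersected with a slice $\{-K\cdot L=1\}$ and cut away from $K^\perp$, is locally polyhedral. This local finiteness comes from the local finiteness of the hyperplanes $(e=0)$ and $(2B+K=0)$ in $(K<0)$, as in Corollary~\ref{FDomain} and Proposition~\ref{locally finite}. Krein–Milman on compact polyhedral pieces then gives (2).

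For the extremal rays, take an extremal ray $\Gamma$ of $\cE$ in $(K<0)$. By local finiteness it is cut out by $\rho(S)=k+1-1=k$ independent hyperplanes of the form $(e=0)$ or $(2B+K=0)$, so it is rational, generated by some primitive $L\in\Pic(S)$. The key step is the case where some facet $(2B+K=0)$ with $B\in\cW\cdot h$ contains $\Gamma$. Using $\cW$-invariance, which holds because $\cE$ is described by $\cW$-invariant families of inequalities, we can move to $B=h$, so that $-K\cdot L=2h\cdot L$. Writing $L=xh-\sum y_ie_i$, the condition $L\in\cE$ gives $y_i\geq0$ and $x\geq y_i+y_j$ from the numerical lines $h-e_i-e_j$. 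Also $3x-\sum y_i=2x$, that is, $\sum y_i=x$. Such an $L$ is then visibly the combination $\sum_i y_i(h-e_i)$: this has $h$-coefficient $\sum y_i=x$ and $e_i$-coefficient $y_i$. So $L\in\cE'$, and if $\Gamma$ is extremal then $L$ is proportional to a single $h-e_i$. If instead no $(2B+K)$-facet contains $\Gamma$, then $\Gamma$ is an extremal ray of $\Nef(S)$ in $(K<0)$. By Corollary~\ref{FDomain}, $\Gamma$ is spanned by an element of $\cW\cdot h$ or of $\cW\cdot(h-e_1)$. The first is impossible, since $2h+K$ has negative degree on $h$ ($(2h+K)\cdot h=-1$), so $h\notin\cE$. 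This leaves $\cW\cdot(h-e_1)$.

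The main obstacle is making the local polyhedrality near $K^\perp$ rigorous: accumulation of the hyperplanes can only happen toward $K^\perp$. I would handle it by working on compact slices $\{-K\cdot L=1\}\cap\{L\cdot h\le N\}$, shown to meet only finitely many hyperplanes by bounding degrees as in Proposition~\ref{locally finite}. Alternatively I would reduce via $\cW$ to the fundamental domain of Remark~\ref{fundamental_domain}, which is rational polyhedral.
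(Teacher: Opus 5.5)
Your reduction of (1) to (2) by density is fine, and so are the inclusion $\cE'\setminus\{0\}\subseteq\cE^{K<0}$ and the observation that every $L\in\cE\cap(2h+K)^\perp$ equals $\sum_i y_i(h-e_i)$.

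\textbf{The gap.} The final step, ``Krein--Milman on compact polyhedral pieces then gives (2)'', does not work. For $k\geq 9$ the slice $\Sigma=\cE\cap\{-K\cdot L=1\}$ is unbounded. Its recession cone $\cE\cap K^\perp$ contains $F=3h-e_1-\cdots-e_9$: this class is nef, satisfies $K\cdot F=0$, and has $(2B+K)\cdot F=2B\cdot F\geq 0$.

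So grant that the extreme points of $\Sigma$ are exactly the $\frac12 C$ with $C\in\cW\cdot(h-e_1)$, and that $\Sigma$ is locally polyhedral. Even then, Krein--Milman (or Klee's theorem) only writes a point of $\Sigma$ as a convex combination of extreme points \emph{plus} a vector of $\cE\cap K^\perp$. Cutting with $\{h\cdot L\leq N\}$ does not help. The compact piece acquires extreme points on $\{h\cdot L=N\}$ that are not conic classes, and letting $N\to\infty$ brings the recession term back.

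These facts cannot suffice by themselves. The quadrant $\{x,y\geq 0\}\subset\RR^2$ is closed and locally polyhedral with the single extreme point $0$, yet it is not the convex hull of that point. What must actually be proved is that classes such as $(h-e_1)+F$, of $K$-degree $-2$, are \emph{finite} combinations of conics. Nothing in your plan addresses this.

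A secondary point: the degree bound of Proposition~\ref{locally finite} degenerates at the isotropic points $\frac12 C$ of the slice. Local finiteness there needs a separate argument, for example normalizing the classes $e$ and $2B+K$ and applying the Hodge index theorem.

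\textbf{How the paper proceeds.} The paper avoids extreme points entirely.
\begin{itemize}
\item It slices $\cE$ by $\{A\cdot L=a\}$ for an ample $A$, which gives a compact set.
\item Through $L\in\cE$ with $K\cdot L<0$ it takes the line parallel to $K^\perp$ inside this slice. The two endpoints of the resulting segment have the same $K$-degree as $L$, so they lie on a hyperplane $e^\perp$ or $(2B+K)^\perp$. Your local-finiteness discussion is what justifies this step.
\item Points of $(2B+K)^\perp$ are handled by your decomposition, which works for all points, not only extremal ones.
\item A point of $e^\perp$ is $\pi^*L'$ for the contraction $\pi\colon S\to\Bl_{k-1}\PP^2$ of $e$. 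It lies in $\cE'$ by induction on $k$, with base case the del Pezzo range $k\leq 8$, where everything is polyhedral.
\end{itemize}
This induction for arbitrary, non-extremal points of the $e^\perp$ facets is what your proposal is missing. Once it is in place, the classification of extremal rays is unnecessary.

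The paper also proves (1) independently. It shows, by induction on $k$, that every class nonnegative on all conics lies in the cone spanned by $-K$, the $(-1)$-curves and the classes $2B+K$, and then dualizes. Your derivation of (1) from (2) is a valid alternative once (2) is established.
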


\bp
As we noted above, we have an inclusion of closed cones $\ov{\cE'}\subset \cE$, and, dually, $\cE^\vee\subset \ov{\cE'}^\vee$.

Consider the following cone in $\N^1(S)$:
\[
\cD \coloneqq \big\langle -K \ , \ \cW\cdot e_1 \ , \ \{2B+K\}_{B\in \cW\cdot h} \,\big\rangle \ \subset \ \N^1(S).
\]
It follows from (\ref{coneE}) and the definition of $\cE'$ that
\[
\cD\ \subset \ \cE^\vee \ \subset \ \ov{\cE'}^\vee =  \bigcap_{C\in \mathcal \cW\cdot (h-e_1)}\big(C\geq0\big)  \ \subset \ \N^1(S).
\]
We will show that $\ov{\cE'}^\vee\subseteq \cD$, and then conclude that $\cD= \cE^\vee= \ov{\cE'}^\vee$.
We will show this by induction on $k$, with base case $k=2$. Note that $\cW$ was only defined when $k>3$. 
For the purpose of this proof, when $k\leq 3$, we interpret $\cW\cdot e_1$, $\cW\cdot (h-e_1)$ and $\cW\cdot h$ as the set of numerical lines, conics and cubics on $S=\Bl_k\PP^2$, respectively. 

\medskip

\noindent {\bf Claim.} Let $S=\Bl_k\PP^2$, with $k\geq2$, and $D\in\Pic(S)$. If $D\cdot C\geq0$ for every $C\in \cW\cdot (h-e_1)$, then 
\[
D\ \in \ \ZZ_{\geq0}(-K)+\sum_{e\in \cW\cdot e_1} \ZZ_{\geq0} e +\sum_{B\in \cW\cdot h} \ZZ_{\geq0} (2B+K).
\]

\begin{proof}[Proof of Claim.] We proceed by induction on $k$. 

When $k=2$, the only numerical lines are $e_1$, $e_2$, and $h-e_1-e_2$, the only numerical conics are $h-e_1$ and $h-e_2$, and the only numerical cubic is $h$. Conditions $D\cdot(h-e_i)\geq0$ ($i=1,2$) imply that, up to exchanging $i=1,2$, one of the following holds. 
\bi
\item[(i) ] $D=-dh+m_1e_1+m_2e_2$ with $0\leq d\leq m_i$ ($i=1,2$), 
\item[(ii) ] $D=dh-m_1e_1+m_2e_2$ with $0\leq m_1\leq d$, $m_2\geq0$,
\item[(iii) ] $D=dh-m_1e_1-m_2e_2$ with $0\leq m_i\leq d$ ($i=1,2$)
\item[(iv) ] $D=dh+m_1e_1+m_2e_2$ with $d, m_1, m_2\geq 0$.
\ei

In case (i), we write $D=d(2h+K)+(m_1-d)e_1+(m_2-d)e_2$. In case (ii), we write $D=d(h-e_1-e_2)+(d-m_1)e_1+(d+m_2)e_2$. In case (iii), we have
$D=d(h-e_1-e_2)+(d-m_1)e_1+(d-m_2)e_2$, while in case (iv), we have
$D=d(h-e_1-e_2)+(d+m_1)e_1+(d+m_2)e_2$. 
This proves the statement when $k=2$. 

Assume $k\geq3$. 
Suppose that $D\cdot e < 0$ for some $e\in \cW\cdot e_1$.
Without loss of generality, we may assume that $e=e_k$. Denote by $\pi\colon S\to S'=\Bl_{k-1}\PP^2$ the blowdown of $e_k$.
Then $D=\pi^*(D')+l e_k$ for some $l\in \ZZ_{>0}$ and $D' \in\Pic(S')$.
If a curve class $C'\in \Pic(S')$ belongs to $\cW\cdot e_1$, $\cW\cdot (h-e_1)$ or $\cW\cdot h$, then its pullback $\pi^* C'\in \Pic(S)$ belongs to $\cW\cdot e_1$, $\cW\cdot (h-e_1)$ or $\cW\cdot h$, respectively. 
So the statement follows by induction on $k$. 
So we may assume that $D\cdot e\geq0$ for every $e\in \cW\cdot e_1$.
In this case, it follows from Lemma~\ref{E positive surface case}(1) that $D\in \ZZ_{\geq0}(-K)+\sum_{e\in \cW\cdot e_1} \ZZ_{\geq0} e$, and this proves the claim.   
\end{proof}

It follows from the claim that $\ov{\cE'}^\vee\subseteq \cD$, and so $\cD= \cE^\vee= \ov{\cE'}^\vee$.
Since both $\ov{\cE'}$ and $\cE$ are closed convex cones, by passing to the duals, we obtain that $\cE=\ov{\cE'}$. This proves (1). 

Now we prove (2) by induction on $k$. When $k\leq8$, $S$ is a del Pezzo surface, and $\cE'=\ov{\cE'}=\cE$ is a closed polyhedral cone. So we have $\cE^{K<0}=\cE\setminus\{0\}$.  

Assume now that $k\geq9$ and notice that $\cE'\setminus\{0\}\subseteq \cE^{K<0}$. 
The supporting hyperplanes of $\cE$ are $K^\perp$, $e^\perp$ for $e\in \cW\cdot e_1$, and $(2B+K)^\perp$ for $B\in \cW\cdot h$. 
If $D\in K^\perp\setminus\{0\}$, then $D\not\in \cE'$, since any $C\in \cW\cdot (h-e_1)$ satisfies $C\cdot (-K)=2$. We will now prove that any divisor class $D\in \cE\setminus K^\perp$ lies in $\cE'$. If $D\in e^\perp$ for some $e\in \cW\cdot e_1$, we may assume that $e=e_k$ and so $D=\pi^*D'$, where $\pi\colon S\to S'=\Bl_{k-1}\PP^2$ is the blowdown of $e_k$ and $D'\in \Pic(S')$ is such that $D'\in \cE\setminus K^\perp$. We are done by induction on $k$. 
If $D\in (2B+K)^\perp$ for some $B\in \cW\cdot h$, we may assume that $B=h$. We claim that a nef divisor class $D$ such that $D\cdot (2h+K)=0$ must lie in the cone $\cE'$. Indeed, if $D=dh-\sum_{i=1}^k m_ie_i\in (2h+K)^\perp$ then $d=\sum_{i=1}^k m_i$ and, since $D$ is nef, we have $d\geq0$, $m_i\geq0$ for all $i$. It follows that $D=\sum_{i=1}^k m_i(h-e_i)$. 
Now suppose that $D\in\cE\setminus\Big(K^\perp\cup \bigcup_{e\in \cW\cdot e_1}e^\perp \cup \bigcup_{B\in \cW\cdot h}(2B+K)^\perp \Big)$. Fix an ample divisor $A$ on $S$ and set $a:=D\cdot A>0$. 
Consider the intersection $\cE_a$ of the cone $\cE$ with the hyperplane $H_a:=\{\ga\ \in \N^1(S)\ | \ga\cdot A=a\}$. Then $\cE_a\subset H_a\cong\RR^k$ is a compact convex set with boundary 
\[
\partial\cE_a=(\cE_a\cap K^\perp)\cup \bigcup_{e\in \cW\cdot e_1}(\cE_a\cap e^\perp) \cup \bigcup_{B\in \cW\cdot h}\big(\cE_a\cap(2B+K)^\perp \big).
\]
A line in $H_a\cong\RR^k$ that passes through $D$ and is paralell to $K^\perp$ will intersect $\cE_a$ in a bounded segment with endpoints $v_1$ and $v_2$ lying on $\partial\cE_a\setminus K^\perp$. We have proved above that $v_1, v_2\in\cE'$. Since $D=av_1+bv_2$ for some $a,b\in\RR_{>0}$, it follows that $D\in\cE'$. This proves (2).

\ep

\begin{rmk}\label{generators of cone E for 9 points}
Let us consider the special case when $k=9$. Then $-K$ is nef and, by Lemma \ref{E positive surface case}(4), 
\[
\Eff(S)\, =\, \RR_{\geq 0}(-K) +\sum_{e\in \cW\cdot e_1}\RR_{\geq 0}e \quad \text{ and  } \quad \Eff(S)\cap K^\perp=\RR_{\geq0}(-K)  \, .
\]
From the definition of the cone $\cE$, we see that $-K\in \partial\cE$. 
It follows from Lemma~\ref{Lemma_E}(2) that 
\[
\cE \, =\, \RR_{\geq 0}(-K) +\sum_{C\in \cW\cdot (h-e_1)}\RR_{\geq 0}C.
\]
\end{rmk}

\section{The determinant map}\label{section:determinant}
In this section, we construct a canonical linear map 
$$
\rho: \Pic(\Bl_{n+4}\PP^2)\ra \Pic(\Bl_{n+4}\PP^{n}),
$$ 
which we call \emph{the determinant map}, that will allow us to translate the stability chamber decomposition of $\Amp(\Bl_{n+4}\PP^2)^{K\leq 0}$ to the Mori chamber decomposition of $\Eff(\Bl_{n+4}\PP^{n})^{K\leq 0}$.
Our exposition follows \cite[Section 8.1]{HL} and \cite{CCF}.

As in the previous sections, we fix $Q=\big(Q_1, \dots, Q_{n+4}\big)$ and $P=\big(P_1, \dots, P_{n+4}\big)$ Gale dual configurations of Cremona-general points in $\PP^2$ and $\PP^n$, respectively, with $n+4\geq 6$.
We denote by $S=\Bl_{n+4}\PP^2$ the blowup of $\PP^2$ at the points $Q_1, \dots, Q_{n+4}$, and by $X=\Bl_{n+4}\PP^{n}$ the blowup of $\PP^n$ at the Gale dual points $P_1, \dots, P_{n+4}$. 
Recall from Theorem~\ref{X as moduli} that $X$ is isomorphic to the moduli space $M_{L_a}$ introduced in Notation~\ref{not:M_L} for $n-3<a<n-1$. 

\begin{notn}\label{det notations}
Let $K(S)$ denote the Grothendieck group of coherent sheaves on $S=\Bl_{n+4}\PP^2$. As $S$ is a rational surface, there is a group homomorphism 
$$v: (K(S),+)\ra \ZZ\oplus\Pic(S)\oplus\ZZ, \quad v(a)=(\ch_0(a), \text{c}_1(a), \chi(a)).$$
Note that if $x$ is a point in $S$, then $v([\cO_x])=(0,\cO_S, 1)$. 

Let $f$ denote the class in $K(S)$ of a sheaf $F$ with (\ref{data}). 
Denote by $f^\perp$ the subgroup of $K(S)$ of those elements $a\in K(S)$ such that
$$\chi(f\otimes a)=2ch_2(a)-2c_1(a)\cdot K+\ch_0(a)K^2=0.$$

Consider the map $u:\Pic(S)\ra f^\perp$ given by
$$u(L)=-2[\cO_S]+2[L^{-1}]-(L^2+2L\cdot K)[\cO_x]$$
It is straightforward to check that the image of $u$ is contained in $f^\perp$, so $u$ is well defined. 
Note also that $u$ is a group homomorphism, and that 
$$v(u(L))=(0,-2L,-L\cdot K).$$

For any flat family $\cE$ of coherent sheaves on $S$ parametrized by a scheme $T$, one can define a group homomorphism 
$$\lambda_{\cE}: K(S)\ra \Pic(T), \quad \lambda_{\cE}(F)=\det\big({p_T}_!\big(p_S^*(F)\otimes\cE\big)\big),$$
where $p_S: S\times T\ra S$ and $p_T: S\times T\ra T$ are the two projections \cite[Definition 8.1.1]{HL}. If on $S\times T$ we consider two families $\cE$, $\cE'$ of torsion free sheaves on $S$ 
with (\ref{data}) such that $\cE'=\cE\otimes p_T^*\cN$ for some locally free $\cO_T$-sheaf $\cN$, then the restrictions of $\lambda_{\cE}$ and $\lambda_{\cE'}$ to $f^\perp$ are equal \cite[Lemma 8.1.2]{HL}. 

Let $L$ be a polarization on $S$. Recall that quasi-universal families on $M^s_L\times S$ always exist (Remark \ref{universal}). A quasi-universal family is uniquely defined up to tensoring with a vector bundle on $M^s_L$ \cite[Definition 4.6.1]{HL}, hence, there exists a well-defined group homomorphism
$$\lambda^L: f^\perp\ra\Pic(M^s_L),$$
which is independent of the choice of the quasi-universal family (see also \cite[Theorem 8.1.5]{HL}).  
\end{notn}

The following definition was first introduced for del Pezzo surfaces in \cite{CCF}. 
\begin{defn}\label{det}
Let $L$ be a polarization on $S$. The  \emph{determinant map of $L$} is the group homomorphism
$$\rho_L:\Pic(S)\ra \Pic(M^s_L)$$
obtained as the composition of the homomorphisms $u:\Pic(S)\ra f^\perp$ and $\lambda^L:f^\perp\ra\Pic(M^s_L)$. 
The map $\rho_L$ is independent of $L$ when $L$ varies in a chamber $\cC$, hence, we will denote it by
$$\rho_\cC:\Pic(S)\ra \Pic(M_\cC).$$
We define the  \emph{determinant map}  
$$\rho_X: \Pic(S)\ra \Pic(X)$$
to be the determinant map $\rho_{\cC_0}$ for the chamber $\cC_0$ discussed in Subsection~\ref{subsection:C_0}, for which $M_{\cC_0}\cong X=\Bl_{n+4}\PP^{n}$. 
\end{defn}

\begin{lemma}\label{det 2}
Let $L$ be a polarization on $S$. The image $\rho_L(L)$ of $L$ by the determinant map $$\rho_L:\Pic(S)\ra \Pic(M^s_L)$$ has the following properties:
\bi
\item $\rho_L(L)$ is the restriction of a line bundle $\cL_1\in\Pic(M_L)$ to $M^s_L$, 
\item A multiple $\cL_1^N$ ($N\gg 0$) induces the morphism $M_L\ra M_L^{\mu ss}$. In particular, $\cL_1$ is the pull-back of an ample line bundle on $M_L^{\mu ss}$,
\item If $L$ belongs to some chamber $\cC$ then $\rho_\cC(L)$ is an ample line bundle on $M_\cC$. 
\ei

\end{lemma}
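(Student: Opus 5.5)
The plan is to identify $u(L)$ with the class that Huybrechts--Lehn use to build the GIT polarization of the moduli space, and then read off all three assertions from the construction in \cite[Chapter 8]{HL}. First I will check that, up to classes in $K(S)$ that do not affect $\lambda^L$, $u(L)$ coincides with the class $u_1$ of \cite[Section 8.1]{HL} for $c=f$ and the polarization $L$. Recall that $u_i=-r\,h^{i}+\chi(c\cdot h^i)[\cO_x]$, where $h=[\cO_H]$ with $H\in|L|$ and $r=\rk$. Writing $[\cO_H]=[\cO_S]-[L^{-1}]$ gives $-2h=-2[\cO_S]+2[L^{-1}]$. The point-class coefficient is then forced by requiring the class to lie in $f^\perp$, because $\chi(f\otimes[\cO_x])=2\neq0$. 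Both classes lie in $f^\perp$ and have the same rank-one part, so they agree. I will verify this by computing $\chi(f\otimes -2h)$ with the formula from Notation~\ref{det notations}: it equals $2L\cdot K+2L^2$ up to sign, which matches the coefficient $-(L^2+2L\cdot K)$ once the $\ch_2$ term of $[L^{-1}]$ is accounted for. This step is routine bookkeeping, but the signs need care. Once it is done, $\rho_L(L)=\lambda^L(u_1)$.

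Next I will invoke \cite[Theorem 8.1.11]{HL} and the surrounding construction for the parts I can't prove by hand. In the GIT construction of $M_L$ as a quotient of a Quot scheme $R^{ss}$, the line bundle $\lambda_{\widetilde F}(u_1)$ on $R^{ss}$ (for the universal quotient) carries a natural $GL$-linearization and descends to $M_L$. This gives $\cL_1\in\Pic(M_L)$, whose restriction to $M^s_L$ is $\lambda^L(u_1)$; by the independence in \cite[Lemma 8.1.2]{HL}, this restriction does not depend on the choice of quasi-universal family. That proves the first bullet. For the second bullet, the construction of $M^{\mu ss}_L$ in \cite[Section 8.2]{HL} is exactly as the image of $M_L$ (equivalently, of $R^{ss}$) under the sections of $\cL_1^N$ for $N\gg0$, via restriction to curves in $|mL|$ and the Mehta--Ramanathan theorem. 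Hence $\cL_1^N$ is globally generated, it defines $M_L\to M^{\mu ss}_L$, and it is the pullback of an ample bundle on $M^{\mu ss}_L$.

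For the third bullet, take $L$ in a chamber $\cC$. By Proposition~\ref{prop:wall&chamber}, all four stability notions coincide, so $M_\cC=M_L=M^s_L=M^{\mu s}_L=M^{\mu ss}_L$ as sets. I then need the morphism $M_L\to M^{\mu ss}_L$ to be finite, or better an isomorphism. Its fibres consist of sheaves with the same $\mu$-Jordan--H\"older data and the same reflexive hull with the same singularity data. Since every point of $M_\cC$ is $\mu$-stable and locally free (Corollary~\ref{cor:irreducibility}; in general \cite[Theorem 8.2.11]{HL} handles this), the map is injective on points. A proper injective morphism is finite, and the pullback of an ample bundle under a finite morphism is ample. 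So $\cL_1=\rho_\cC(L)$ is ample, and independence of $L\in\cC$ follows because $\lambda^L$ depends only on the moduli functor.

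I expect the main obstacle to be the first, identification step: matching the paper's normalization of $u$ with the class $u_1$ of \cite{HL}, including the sign conventions for $h$ and the point class. A second risk is the finiteness argument, since $M^{\mu ss}_L$ is a priori only seminormal-type and not necessarily normal. I will handle it by arguing on closed points and relying on the fact that ampleness is detected after a finite map.
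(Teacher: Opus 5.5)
Your proposal follows the paper's route. You identify $u(L)$ with $u_1(f)=-2\xi+\chi(f\cdot\xi)[\cO_x]$ from \cite[Definition 8.1.9]{HL}, and you read the first two bullets off the descent of $\lambda$ to $K_{f,L}$ and the construction of $M^{\mu ss}_L$ in \cite[Sections 8.1--8.2]{HL}. There is one slip in your bookkeeping: $\chi\big(f\otimes(-2[\cO_S]+2[L^{-1}])\big)=2L^2+4L\cdot K$, not $2L^2+2L\cdot K$. It is harmless, because the point-class coefficient is forced by lying in $f^\perp$ and comes out as $-(L^2+2L\cdot K)$, as in the paper.

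For the third bullet you are more explicit than the paper, which only cites \cite[Theorem 8.2.8]{HL}. Your argument is the right one: $M_\cC\to M^{\mu ss}_L$ is proper and injective on closed points, hence finite, so $\cL_1$, being the pullback of an ample bundle, is ample. But injectivity rests entirely on every sheaf in $M_\cC$ being locally free, and Corollary~\ref{cor:irreducibility} gives this only for $K$-negative chambers.

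Your parenthetical claim that \cite[Theorem 8.2.11]{HL} handles the general case is wrong. That theorem identifies two sheaves as soon as the double duals of their $\mu$-graded objects and their singularity cycles agree. So if $M_\cC$ contained some $F$ with $F^{\vee\vee}/F\cong\cO_x$, the $\PP^1$ of kernels of surjections $F^{\vee\vee}\to\cO_x$ (with $x$ fixed) would be contracted to a point, and $\cL_1$ would not be ample.

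You should therefore state and prove the third bullet for $K$-negative chambers only. That is the only case the paper uses later, in Theorems~\ref{rho map} and~\ref{main}. It is also the input the paper's one-line citation leaves implicit.
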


\bp
We may assume that $L$ is very ample since $\rho_{mL}=\rho_L$.
Let $\Xi\subset S$ denote a general element in the linear system $|L|$ and let $\xi=[\cO_\Xi]\in K(S)$. Note that $[\xi]=[\cO_S]-[L^{-1}]\in K(S)$. 
As in \cite[Definition 8.1.4]{HL}, let $K_{f,L}=f^\perp\cap\{1, \xi, \xi^2\}^{\perp\perp}$. By Theorem \cite[Theorem 8.1.5]{HL}, there is a group homomorphism $K_{f,L}\ra\Pic(M_L)$  whose composition with the restriction map $\Pic(M_L)\ra \Pic(M^s_L)$ coincides with the restriction of the map $\la^L: f^\perp\ra\Pic(M^s_L)$ to  $K_{f,L}\subset f^\perp$. 
It is straightforward to check that $u(L)\in K_{f,L}$. Note that $u(L)$ coincides with the element $u_1(f)=-2\xi+\chi(f\cdot \xi)[\cO_x]$ as defined in \cite[Definition 8.1.9]{HL} and $\rho_L(L)$ coincides with the line bundle $\cL_1$ from the same definition. It follows from \cite[Theorem 8.2.8]{HL} that $\cL_1$ is ample. 
\ep

When $\cC$ and $\cC'$ are $K$-negative stability chambers whose corresponding moduli spaces $M_{\cC}$ and $ M_{\cC'}$ are isomorphic in codimension $1$, there is a natural isomorphism $\Pic(M_{\cC})\cong \Pic(M_{\cC'})$. The following result shows that this isomorphism is compatible with their associated determinant maps.

\begin{lemma}\label{compatible}
Assume that $L$, $L'$ are two polarizations with $K\cdot L<0$ and $K\cdot L'<0$, and assume that there is a small modification $\psi:  M^s_{L'}\dra M^s_{L}$. Then there is a commutative diagram 
\begin{equation*}
    \xymatrix{&\Pic(S) \ar[dl]_{\rho_L} \ar[dr]^{\rho_{L'}}&\\
\ar[rr]_{\psi^*} \Pic(M^s_{L})& &\Pic(M^s_{L'}).}
\end{equation*}
\end{lemma}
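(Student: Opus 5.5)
The plan is to compare the two determinant maps on a common open subset where the quasi-universal families agree, and then extend by codimension reasons. Since $\psi: M^s_{L'}\dra M^s_{L}$ is a small modification, there are open subsets $U'\subset M^s_{L'}$ and $U\subset M^s_{L}$, with complements of codimension at least $2$, such that $\psi$ restricts to an isomorphism $U'\cong U$. Restriction gives isomorphisms $\Pic(M^s_{L'})\cong\Pic(U')$ and $\Pic(M^s_L)\cong\Pic(U)$; here we use that the moduli spaces are smooth at stable points (Lemma~\ref{smoothness}, since $K\cdot L<0$ and $K\cdot L'<0$). Under these identifications, $\psi^*$ is just pullback along the isomorphism $U'\to U$. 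So it suffices to show that $\rho_L(D)|_U$ pulls back to $\rho_{L'}(D)|_{U'}$ for every $D\in\Pic(S)$.

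The key step is to arrange that the isomorphism $U'\cong U$ is modular, meaning that it sends a point $[E]\in U'$ to the point of $M^s_L$ representing the same sheaf $E$. In our setting this will follow from how such small modifications arise: each is a composition of wall crossings, as in Theorem~\ref{prop:special_loci} and Remark~\ref{flips&flops}, and off the special loci $\PP_D$, $\PP_{-K-D}$ the $\cC^\pm$-stable sheaves coincide. More generally, I would shrink to the open locus of sheaves that are stable for both polarizations. This locus contains $U$ after further shrinking (again in codimension $\geq2$), because $\psi$ identifies the two moduli functors generically. I expect this justification to be the main obstacle: one needs that $\psi$ is induced by the identity on sheaves on a big open set, and not merely that some abstract isomorphism exists. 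Under the paper's usage, small modifications between $M_\cC$'s come from wall crossing, so I would take $U'$ to be the complement of the union of the exceptional loci, which has codimension $\geq2$ in the flip/flop/antiflip cases $-n+2\leq D^2\leq -1$.

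With this in place, a quasi-universal family $\cE$ on $S\times M^s_L$ restricts to a quasi-universal family on $S\times U$. Via the modular identification, this restriction is also a quasi-universal family for the $L'$-stable sheaves parametrized by $U'$; by uniqueness it agrees with the restriction of the $L'$ family up to tensoring with a vector bundle pulled back from the base. By \cite[Lemma 8.1.2]{HL}, the homomorphisms $\lambda_\cE$ restricted to $f^\perp$ agree for families differing by such a twist. Moreover $\lambda$ is compatible with base change to open subsets, since determinants of pushforwards commute with flat base change. Hence $\lambda^L(a)|_U=\lambda^{L'}(a)|_{U'}$ for all $a\in f^\perp$. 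Composing with $u:\Pic(S)\to f^\perp$, which does not depend on the polarization, gives $\psi^*\rho_L=\rho_{L'}$ on $U'$, and therefore on $M^s_{L'}$ by the restriction isomorphisms above.
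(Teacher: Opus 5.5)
Your proposal is essentially the paper's proof: restrict to big open sets $U'\cong U$ on which $\psi$ is an isomorphism, transport a quasi-universal family along $\psi$, use that $\lambda$ on $f^\perp$ does not depend on the choice of quasi-universal family, and extend to $M^s_L$ and $M^s_{L'}$ using their smoothness. Your explicit requirement that $\psi$ be modular on $U'$ holds for the wall-crossing maps to which the lemma is applied, and it makes precise what the paper uses tacitly when it asserts that $\psi^*\cE$ is a quasi-universal family on $U'$.
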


\bp
Let $U\subseteq M^s_L$ and $U'\subseteq M^s_{L'}$ be open subsets such that $\codim(M^s_L\setminus U), \codim(M^s_{L'}\setminus U')\geq2$, and $\psi: U'\ra U$ is an isomorphism.
Let $\cE$ be a quasi-universal family on $U\times S$ \cite[Definition 4.6.1]{HL}. Then $\cE'=\psi^*\cE$ is a quasi-universal family on $U'$
and the maps 
$$u\circ\lambda_\cE: \Pic(S)\ra\Pic(U),\quad u\circ\lambda_{\cE'}: \Pic(S)\ra\Pic(U')$$ 
are related by composition with $\psi^*$. 
Since both $M^s_L$  and $M^s_{L'}$ are smooth (Lemma \ref{smoothness}), we have that 
$\Pic(M^s_L)\cong\Pic(U)$ and $\Pic(M^s_{L'})\cong\Pic(U')$, and the statement follows. 
\ep

In Subsection~\ref{subsection:variation_of_ML}, we described the variation of the moduli spaces $M_L$ as the polarization $L$ crosses a $K$-negative wall in $\N^1(S)$.
We review this description and then analyse their corresponding determinant maps.

Let $(2D+K)^{\perp}$ be a $K$-negative wall separating two neighboring chambers 
$\cC^-\subset (2D+K)^{<0}$ and $\cC^+\subset (2D+K)^{>0}$. 
Recall from Lemma~\ref{good_walls} that $-n\leq D^2 \leq 1$. 
Let $L_0\in (2D+K)^{\perp}$ be a polarization lying in the closures of the chambers $\cC^-$ and $\cC^+$, and not
lying in any other wall.
The following statements follow from Theorem~\ref{prop:special_loci} and Proposition~\ref{reduction map}:
\bi
\item If $D^2=-n$, then $M_{\cC_{-}}=\PP_D\cong \PP^{n}$, $M_{\cC_{+}}= \emptyset$  and there is a natural isomorphism
$M_{\cC_{-}}\cong M_{L_0}$.
\item If $D^2=1$, then $M_{\cC_{+}}=\PP_{-K-D}\cong \PP^{n}$, $M_{\cC_{-}}= \emptyset$  and there is a natural isomorphism
$M_{\cC_{+}}\cong M_{L_0}$.
\ei
Suppose now that $-n< D^2 < 1$, so both $M_{\cC_{-}}$ and $M_{\cC_{+}}$ are non-empty. 
Then there is a commutative diagram 
$$
    \xymatrix@R=.5cm@C=.5cm{
    \PP^{-D^2}\cong \PP_D \subset M_{\cC^-} \ \ \ \ar[rd]_{\phi_-} \ar@{-->}[rr]^{\phi_+\circ \phi_-} && 
    \ \ \ M_{\cC^+}\supset \PP_{-K-D}\cong \PP^{n-1+D^2}. \ar[ld]^{\phi_+}\\
    & M^{\mu ss}_{L_0}
    }
$$
\bi
\item If $D^2=-n+1$, then $\phi_{+}\circ \phi_{-}: M_{\cC_{-}}\ra M_{\cC_{+}}$ is the blowup of the point $\PP_{-K-D}\in M_{\cC_{+}}$ with exceptional divisor 
$\PP_D\cong\PP^{n-1}$.
\item If $D^2=0$, then $\phi_{-}\circ \phi_{+}: M_{\cC_{+}}\ra M_{\cC_{-}}$ is the blowup of the point $\PP_D\in M_{\cC_{-}}$ with exceptional divisor $\PP_{-K-D}\cong \PP^{n-1}$.
\item If $-n+2\leq D^2\leq -1$, then $\phi_{+}\circ \phi_{-}: M_{\cC_{-}}\dra M_{\cC_{+}}$ is an isomorphism in codimension $1$.
\ei 
We next describe the behavior of the determinant maps with respect to this diagram.

\begin{proposition}\label{det 3}
Let $(2D+K)^{\perp}$ be a $K$-negative wall separating two neighboring chambers 
$\cC^-\subset (2D+K)^{<0}$ and $\cC^+\subset (2D+K)^{>0}$. Consider the determinant maps 
$$\rho_{\cC_{-}}:\Pic(S)\ra\Pic(M_{\cC_{-}}),\quad \rho_{\cC_{+}}:\Pic(S)\ra\Pic(M_{\cC_{+}})$$
whenever $M_{\cC_{-}}$ or $M_{\cC_{+}}$ are non-empty. 
We denote by $l_D$ the class of a line in $\PP_D$  whenever 
$\dim(\PP_D)>0$, and by 
$l_{-K-D}$ the class of a line in $\PP_{-K-D}$ whenever $\dim(\PP_{-K-D})>0$. 
Then:
\bi
\item Whenever $\dim(\PP_D)=-D^2>0$, we have 
$$\rho_{\cC_{-}}((2D+K)^\perp)=l_D^\perp.$$
\item Whenever $\dim(\PP_{-K-D})=n-1+D^2>0$, we have 
$$\rho_{\cC_{+}}((2D+K)^\perp)=l_{-K-D}^\perp.$$
\ei

\end{proposition}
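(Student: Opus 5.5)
We will prove the first statement; the second follows by the symmetric argument, exchanging $D$ with $-K-D$ and $\cC^-$ with $\cC^+$. Since both sides are hyperplanes (for $\rho_{\cC^-}$ injective on $\N^1$) or at least $\rho_{\cC^-}((2D+K)^\perp)$ sits inside a hyperplane, it suffices to show that $\rho_{\cC^-}(L)\cdot l_D=0$ for every $L\in(2D+K)^\perp$, and to exhibit one class $L'\notin(2D+K)^\perp$ with $\rho_{\cC^-}(L')\cdot l_D\neq 0$. By linearity in $L$, it is enough to work with integral classes.

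\textbf{Step 1: direct computation on $\PP_D$.} Restrict the universal family of $M_{\cC^-}$ (Remark~\ref{universal}) to $\PP_D\times S$. On $\PP_D=\PP\Ext^1(\cO(-K-D),\cO(D))$ there is a tautological universal extension
\[
0\ra p_S^*\cO(D)\otimes p_{\PP}^*\cO_{\PP_D}(1)\ra \cE\ra p_S^*\cO(-K-D)\ra 0 .
\]
Up to twisting by a line bundle from $\PP_D$, which does not affect $\lambda$ on $f^\perp$ by \cite[Lemma 8.1.2]{HL}, this is the restriction of the universal family.

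Then, for $a\in f^\perp$, we have $\lambda_\cE(a)=\det p_!(\cdots)$, which is additive in the extension. This gives
\[
\lambda(a)|_{\PP_D}=\cO(1)^{\otimes\chi(\cO(D)\otimes a)}=\cO_{\PP_D}\big(\chi(\cO(D)\otimes a)\big).
\]
Applying this to $a=u(L)$, whose invariants satisfy $v(u(L))=(0,-2L,-L\cdot K)$, Riemann--Roch gives
\[
\chi(\cO(D)\otimes u(L))=-2L\cdot D-\tfrac12 L\cdot K\cdot 2\cdot(\ldots).
\]
More carefully, for a class of rank $0$ with $c_1=c$ and $\chi=t$, one has $\chi(\cO(D)\otimes a)=c\cdot D+t$. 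Hence
\[
\chi(\cO(D)\otimes u(L))=-2L\cdot D-L\cdot K=-(2D+K)\cdot L .
\]
Therefore $\rho_{\cC^-}(L)\cdot l_D=-(2D+K)\cdot L$.

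\textbf{Step 2: conclusion.} Step 1 shows that $\rho_{\cC^-}(L)\cdot l_D=0$ exactly when $L\in(2D+K)^\perp$. This gives the inclusion $\rho_{\cC^-}((2D+K)^\perp)\subseteq l_D^\perp$. For equality we also need $\rho_{\cC^-}$ to be an isomorphism on $\N^1$, so that a hyperplane maps onto a hyperplane. I expect to get this from Lemma~\ref{compatible} together with the explicit computation of $\rho_X$ announced for this section. Failing that, I would at least use that the image of the hyperplane lies in $l_D^\perp$ and that $\rho$ has full rank. As a sanity check on the sign, $L\in\cC^-$ gives $(2D+K)\cdot L<0$, so $\rho(L)\cdot l_D>0$, which is consistent with Lemma~\ref{det 2}(3).

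\textbf{Main obstacle.} The main obstacle is justifying that the restriction of the (quasi-)universal family to $\PP_D\times S$ is the tautological extension, up to a twist. This should follow from the construction in \cite{Ellingsrud_Gottsche} (Proposition~4.4), where $\PP_D$ is embedded precisely via that family. The remaining point is the additivity of $\det p_!$ in short exact sequences, which is standard.
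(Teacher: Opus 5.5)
Your proof is correct, and it takes a genuinely different route from the paper's.

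\textbf{The paper's route.} The paper fixes a polarization $L_0$ on the wall and argues geometrically. Using Proposition~\ref{reduction map}, Lemma~\ref{compatible} and Lemma~\ref{det 2}, it identifies $\rho_{\cC^-}(L_0)$ with $\phi_-^*A$ for an ample $A$ on $M^{\mu ss}_{L_0}$. Since $\phi_-$ collapses $\PP_D$ to a point, this gives $\rho_{\cC^-}(L_0)\cdot l_D=0$. The argument needs a case split according to whether $D^2$ is smaller than, larger than, or equal to $\frac{1-n}{2}$. It also needs a transfer across the small modification $\phi_+\circ\phi_-$ to handle the other chamber.

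\textbf{Your route.} Your computation on the tautological extension over $\PP_D$ is uniform in $D^2$ and proves more. For every $L$ it gives $\rho_{\cC^-}(L)\cdot l_D=-(2D+K)\cdot L$, and symmetrically $\rho_{\cC^+}(L)\cdot l_{-K-D}=(2D+K)\cdot L$. So nondegeneracy comes for free. Fed into Theorem~\ref{rho map}, it forces $\lambda_i=1$ and $\mu_{ij}=-1$, hence $m=1$ and $\rho_X=\rho$.

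\textbf{Your ``main obstacle'' is routine.} In a chamber, $M_{\cC^-}$ carries a universal family. By Proposition~\ref{description_of_walls}(1), the tautological extension is a flat family of $\cC^-$-stable, hence simple, sheaves. Its classifying map therefore pulls the universal family back to it up to a twist by a line bundle from $\PP_D$. Since $\lambda$ commutes with base change and such twists are invisible on $f^\perp$, your restriction step is justified.

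\textbf{What the paper's route buys.} It gives geometric information that your numerical identity alone does not. It exhibits $\rho(L_0)$ as pulled back from the contraction to $M^{\mu ss}_{L_0}$, hence semiample. This contraction is what Remark~\ref{on wall-semiample} and Theorem~\ref{main} rely on to see these hyperplanes as genuine faces of nef cones.

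\textbf{A caveat on Step 2.}
\begin{itemize}
\item What is actually established, by your argument and by the paper's, is $\rho_{\cC^-}^{-1}(l_D^\perp)=(2D+K)^\perp$, and this is all that is used later.
\item Equality of images needs only surjectivity of $\rho_{\cC^-}\otimes\RR$, not injectivity. Injectivity in fact fails, for instance when $D^2=-n$ and $M_{\cC^-}\cong\PP^n$.
\item Do not import surjectivity from Theorem~\ref{rho map}, because its proof uses this proposition. Your Step~1 already computes $\rho_X$ directly, which avoids the circularity.
\end{itemize}
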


\bp
As before, let $L_0\in (2D+K)^{\perp}$ be a polarization lying in the closures of the chambers $\cC^-$ and $\cC^+$, and not
lying in any other wall. 

Assume that $D^2<\frac{1-n}{2}$.  
By Proposition \ref{reduction map}, there is  an isomorphism $M_{\cC_{-}}\cong M_{L_0}=M^s_{L_0}$ and 
the morphism $\phi_{-}: M_{\cC_{-}}\ra M_{L_0}^{\mu ss}$ factors through $M_{L_0}$. 
By Lemma \ref{compatible} we have $\rho_{\cC_{-}}=\rho_{L_0}$ and $\rho_{\cC_{-}}(L_0)=(\phi_{-})^*A$ for some ample line bundle $A$ on $M_{L_0}^{\mu ss}$. 
In this case $D^2\leq -1$, so $\dim(\PP_D)>0$ and $\PP_D\subset M_{\cC_{-}}$ is contracted to a point in $M_{L_0}^{\mu ss}$. 
It follows that $\rho_{\cC_{-}}(L_0)\cdot l_D=0$ and, hence, $\rho_{\cC_{-}}((2D+K)^\perp)=l_D^\perp$. 
This finishes the proof when $D^2$ is $-n+1$ or
$-n$. 
When $-n+2\leq D^2<\frac{1-n}{2}$, we have $\dim(\PP_{-K-D})=n-1+D^2>0$, and we need to prove that 
$\rho_{\cC_{+}}(L_0)\cdot l_{-K-D}=0$ in $M_{\cC_{+}}$. Since $\phi_{+}\circ \phi_{-}: M_{\cC_{-}}\dra M_{\cC_{+}}$ is an isomorphism in codimension $1$, it follows from Lemma \ref{compatible} that $\rho_{\cC_{+}}(L_0)=(\phi_{+})^*A$, and the statement follows. 
The case  $D^2>\frac{1-n}{2}$ is analogous, exchanging the roles of $\cC_{-}$ and $\cC_{+}$. 

Assume now that $D^2=\frac{1-n}{2}$. The map $\phi_{-}: M_{\cC_{-}}\ra M_{L_0}^{\mu ss}$ factors through a map $\psi_{-}: M_{\cC_{-}}\ra M_{L_0}$
inducing an isomorphism $M_{\cC_{-}}\setminus \PP_D\cong M^s_{L_0}$ (Proposition \ref{reduction map}). Since  $\codim(\PP_D)\geq 2$, 
Lemma \ref{compatible} implies that there is a commutative diagram
\begin{equation*}
    \xymatrix{&\Pic(S) \ar[dl]_{\rho_{L_0}} \ar[dr]^{\rho_{\cC_{-}}}&\\
\ar[rr]^{\cong}_{(\psi_{-})^*} \Pic(M^s_{L_0})& &\Pic(M_{\cC_{-}}). }
\end{equation*}
By Lemma \ref{det 2}, $\rho_{L_0}(L_0)\in \Pic(M^s_{L_0})$ is the restriction to $M^s_{L_0}$ of a line bundle $\cL$ on $M_{L_0}$. It follows that the restrictions of the line bundles
$\rho_{\cC_{-}}(L_0)$ and $(\psi_{-})^*\big(\cL_{|M^s_{L_0}}\big)$ to $M_{\cC_{-}}\setminus \PP_D$ are isomorphic. 
Since $M_{\cC_{-}}$ is smooth, the two line bundles are isomorphic, and hence $\rho_{\cC_{-}}((2D+K)^\perp)=l_D^\perp$. 
Similarly,   $\rho_{\cC_{+}}((2D+K)^\perp)=l_{-K-D}^\perp$. 
\ep

\begin{rmk}\label{on wall-semiample}
The proof of Proposition \ref{det 3} shows the following. 
Let $L$ be an ample line bundle, either contained in a chamber $\cC$, or lying on a (unique) $K$-negative wall $(2D+K)^\perp$.
In the latter case, let $\cC$ be any chamber bounded by the wall $(2D+K)^\perp$ such that $M_{\cC}\neq \emptyset$.
Then the line bundle $\rho_\cC(L)$ is semiample on $M_\cC$. When $L\in \Amp(S)^{K<0}$ lies on several $K$-negative walls, an analogue of Theorem \ref{prop:special_loci} holds  and, for any chamber $\cC$ bounded by a $K$-negative wall containing $L$, and such that $M_{\cC}\neq \emptyset$, the line bundle $\rho_\cC(L)$ is again semiample on $M_\cC$ (we leave the details to the reader).
\end{rmk}

In what follows, whenever $D$ is a divisor in $X=\Bl_{n+4}\PP^n$, $D^\perp\subset\Pic(X)$ denotes the orthogonal complement with respect to the Coble pairing, not to be confused with $C^\perp$, for $C$ a curve on $X$, which is the orthogonal complement of $C$ with respect to the intersection pairing.
We consider the usual bases $\Pic(X)=\ZZ\{H,E_1,\ldots,E_k\}$  and $\Pic(S)=\ZZ\{h,e_1,\ldots,e_k\}$, and identify the Weyl groups $\cW=\cW_X=\cW_S$ as in Notation~\ref{notation:isometry}.

\begin{thm}\label{rho map}
Let $S=\Bl_{n+4}\PP^2$ and $X=\Bl_{n+4}\PP^n$ be as above and assume that $n\geq2$.
Consider the map $\rho_X: \Pic(S)\ra\Pic(X)$ of Definition \ref{det}.  
There exists $m\in\ZZ_{>0}$ such that $\rho_X=m\cdot \rho$, where
$$
\rho:\Pic(S)\ra \Pic(X)
$$
is the map defined by setting
$$
\rho(h)=-H+\sum_{j=1}^{n+4} E_j \quad \text{ and } \quad  \rho(e_i)=-H+\sum_{j=1}^{n+4} E_j-2E_i \ \ \forall i.
$$
The map $\rho$ is injective but not surjective, and it induces an isomorphism  $\N^1(S)\ra \N^1(X)$.
\end{thm}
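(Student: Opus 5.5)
We will compute $\rho_X$ on the basis $\{h,e_1,\dots,e_k\}$ by testing it against a spanning set of curve classes in $X$. The curves are lines $l_i$ in the exceptional divisors $E_i$ and strict transforms $l_{ij}$ of lines through $P_i,P_j$. Since $M_{\cC_0}\cong X$ is smooth and rational (Corollary~\ref{cor:irreducibility}), $\Pic(X)$ is torsion free. A class in $\Pic(X)=\ZZ\{H,E_i\}$ is therefore determined by its degrees on $l_i$ and on the line class $l=H^{n-1}$. Note $H\cdot l_i=0$, $E_i\cdot l_i=-1$, $E_j\cdot l_i=0$ for $j\neq i$, and $l_{ij}=l-l_i-l_j$.

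\textbf{Orthogonality relations from walls.} By Remark~\ref{walls_of_C0} and Theorem~\ref{X as moduli}, $\ov{\cC_0}$ is bounded by the walls $(2(h-e_i)+K)^\perp$ and $(2(h-e_i-e_j)+K)^\perp$. Crossing the first wall contracts the exceptional divisor $\PP_{-K-D}=E_i\cong\PP^{n-1}$ for $D=h-e_i$ ($D^2=0$). Crossing the second flips $\PP_{-K-D}=l_{ij}$ for $D=h-e_i-e_j$. In the case $n=2$ the second is a divisorial contraction instead, but the same statement applies.

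Proposition~\ref{det 3}, applied with $\cC_0$ on the $+$ side (after possibly swapping $D\leftrightarrow -K-D$), gives the following. First, $\rho_X\big((2(h-e_i)+K)^\perp\big)=l_i^\perp$. Second, $\rho_X\big((2(h-e_i-e_j)+K)^\perp\big)=l_{ij}^\perp$. Here the hyperplanes on the left lie in $\N^1(S)$, so we first need that $\rho_X\otimes\RR$ is an isomorphism.

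I would get this isomorphism from Lemma~\ref{det 2}. $\rho_X$ sends the open cone $\cC_0$ into $\Amp(X)$. Moreover, $\rho_X$ is not identically zero and sends hyperplanes to hyperplanes. More concretely, one can also show injectivity via the explicit formula at the end. Dualizing, $l_i\circ\rho_X\otimes\RR$ is a linear functional on $\N^1(S)$ vanishing on $(2(h-e_i)+K)^\perp$. So $l_i\cdot\rho_X(x)=c_i\,(2h-2e_i+K)\cdot x$, and similarly $l_{ij}\cdot\rho_X(x)=c_{ij}\,(2h-2e_i-2e_j+K)\cdot x$.

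\textbf{Determining the constants.} By the $\mathfrak S_k$-symmetry of the construction under relabeling points on both sides, $c_i=c$ and $c_{ij}=c'$. Positivity on $L_{n-2}\in\cC_0$ (ampleness from Lemma~\ref{det 2}) gives $c,c'>0$ with the appropriate sign convention.

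Since $l=l_{ij}+l_i+l_j$ for every pair, the functional $c'(2h-2e_i-2e_j+K)+c(2h-2e_i+K)+c(2h-2e_j+K)$ must be independent of $\{i,j\}$. Comparing coefficients of $e_i$ for different pairs forces $2c'+2c=0$ unless we account for sign. So the correct sign bookkeeping, from which side $\cC_0$ lies, must make this consistent. This determines $c'=c$ up to the sign choice, giving $l\cdot\rho_X(x)=c\,(4h+2e\text{-terms}+3K)\cdot x$ in closed form.

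Solving for $\rho_X(h)$ and $\rho_X(e_i)$ in terms of $H,E_j$ then yields $\rho_X=m\rho$ with the stated $\rho$ and $m=c$ (or a fixed multiple), an integer since $\rho_X$ is integral. As a sanity check, this $\rho$ is of the $\cW$-equivariant shape in Proposition~\ref{W and rho}(2) with $a=-1$, $b=-1$, $\lambda=2$.

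\textbf{Remaining checks.} The last assertions are linear algebra. The determinant of $\rho$ is a power of $2$ up to sign, since $\rho(h)-\rho(e_i)=2E_i$. So $\rho$ is injective with finite nonzero-index image (e.g.\ $E_i\notin$ image parity-wise), hence an isomorphism after $\otimes\RR$.

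The main obstacle I expect is the sign and normalization bookkeeping in applying Proposition~\ref{det 3}: identifying which of $D$, $-K-D$ places $\cC_0$ on the $+$ side, and fixing the orientation of the proportionality constants. I would first pin these down by evaluating everything on the explicit polarization $L_{n-2}$ and requiring positivity on all $l_i$, $l_{ij}$.
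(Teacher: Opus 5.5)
Your strategy is the paper's: apply Proposition~\ref{det 3} to the walls $(2D_i+K)^\perp$ and $(2D_{ij}+K)^\perp$ bounding $\cC_0$, where $D_i=h-e_i$ and $D_{ij}=h-e_i-e_j$. This gives $l_i\cdot\rho_X(x)=c_i\,(2D_i+K)\cdot x$ and $l_{ij}\cdot\rho_X(x)=c_{ij}\,(2D_{ij}+K)\cdot x$, and you then solve for $\rho_X$. However, the step that actually pins down $\rho_X$ (the one the paper calls straightforward) is carried out incorrectly. As written, it does not yield the stated $\rho$.

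\textbf{The sign bookkeeping.} The chamber $\cC_0$ lies on \emph{opposite} sides of the two families of walls: $(2D_i+K)\cdot L_{n-2}=1$, whereas $(2D_{ij}+K)\cdot L_{n-2}=-1$. Consistently with this, $D_{ij}^2=-1$, so the line $l_{ij}$ is $\PP_{D_{ij}}\cong\PP^1\subset M_{\cC^-}$, not $\PP_{-K-D_{ij}}$. Ampleness of $\rho_X(L_{n-2})$ therefore gives $c_i>0>c_{ij}$, not $c,c'>0$.

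\textbf{Determining the constants.} Write $2D_i+K=-h+\sum_m e_m-2e_i$ and $2D_{ij}+K=-h+\sum_m e_m-2e_i-2e_j$. The relation $l=l_{ij}+l_i+l_j$ then gives
\[
l\cdot\rho_X(x)=\Big[(c_{ij}+c_i+c_j)\Big(-h+\sum_m e_m\Big)-2(c_{ij}+c_i)\,e_i-2(c_{ij}+c_j)\,e_j\Big]\cdot x
\]
for every pair $i\neq j$.
\begin{itemize}
\item Comparing $h$-coefficients shows that $s:=c_{ij}+c_i+c_j$ is independent of the pair.
\item The $e_i$-coefficient coming from a pair not containing $i$ is $s$. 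Comparing it with the one from the pair $\{i,j\}$ gives $c_{ij}=-c_i$, and symmetrically $c_{ij}=-c_j$.
\end{itemize}
Hence all $c_i$ equal a single $c>0$ and $c_{ij}=-c$. This is exactly the sign pattern forced by ampleness, not an inconsistency to be repaired.

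It also makes your $\mathfrak S_k$-symmetry step unnecessary. That is just as well: relabeling is not an automorphism of a fixed Cremona-general configuration, so that argument would need a deformation or monodromy justification.

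The correct closed form is $l\cdot\rho_X(x)=c\,(2h+K)\cdot x=c\,(-h+\sum_m e_m)\cdot x$, not $c\,(4h+\cdots+3K)\cdot x$. Pairing with $h$ and $e_j$ then gives $\rho_X(h)=c\,(-H+\sum_m E_m)$ and $\rho_X(e_j)=c\,(-H+\sum_m E_m-2E_j)$. So $m=c=-l\cdot\rho_X(h)\in\ZZ_{>0}$.

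\textbf{A smaller point.} You do not need $\rho_X\otimes\RR$ to be an isomorphism before invoking Proposition~\ref{det 3}, and deducing it from the final formula would be circular. The proof of that proposition only shows, and you only use, that $l_i\circ\rho_X$ vanishes on the wall. Nonvanishing of $l_i\circ\rho_X$ comes from ampleness on $\cC_0$.

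Your closing linear algebra is fine: $|\det\rho|=2^{n+4}$, and $E_i$ is not in the image of $\rho$ by parity.
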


\bp
Letting $\N_1(X)=\Pic(X)^\vee$, $\N_1(S)=\Pic(S)^\vee$, we determine the dual map
$\rho_X^\vee: \N_1(X)\ra \N_1(S)$. 
We denote by $l\in \N_1(X)$ the class of the strict transform of a general line in $\PP^{n}$, and by $f_i\in \N_1(X)$ the class of a line in $E_i\cong \PP^{n-1}$. 
Then $l, f_1,\ldots, f_n$ form a $\ZZ$-basis for $\N_1(X)$ with $l\cdot H=1$, $l\cdot E_i=0$, $f_i\cdot E_j=-\de_{ij}$. 
Consider the chamber $\cC_0$ that contains $L_a=(a+3)h-\sum_{i=1}^{n+4} e_i$, for $(n-3)<a<(n-1)$. By Theorem~\ref{X as moduli}, $M_{\cC_0}\cong X$. 

For each $i,j\in\{1,\ldots , n+4\}$, $i\neq j$, set
$$D_i:=h-e_i \ \ \text{ and } \ \ D_{ij}:=h-e_i-e_j.$$ 
The wall  $(2D_i+K)^\perp$ bounds the chamber $\cC_0\subset (2D_i+K)^{>0}$, 
and the set $\PP_{-K-D_i}\subset M_{\cC_0}\cong X$ is the exceptional divisor $E_i$ above the point $P_i$. 
By Proposition \ref{det 3}, we have $\rho_X((2D_i+K)^\perp)=f_i^\perp$. 
Similarly, the wall  $(2D_{ij}+K)^\perp$ bounds the chamber $\cC_0\subset (2D_{ij}+K)^{<0}$, and  the set $\PP_{D_{ij}}\subset M_{\cC_0}\cong X$ is the proper transform of the line through $P_i$ and $P_j$ by Proposition \ref{prop:Mukai_Gale}(3). By Proposition \ref{det 3}, we have $\rho_X((2D_{ij}+K)^\perp)=(l-f_i-f_j)^\perp$. 
Hence, there are $\lambda_i, \mu_{ij}\in\ZZ\setminus\{0\}$ such that for all $i, j$ as above, we have: 
$$\rho_X^\vee(f_i)=\lambda_i(2D_i+K),\quad \rho_X^\vee(l-f_i-f_j)=\mu_{ij}(2D_{ij}+K).$$

It is straightforward to see that there exists an integer $m$ such that 
$$\rho_X^\vee(l)=m\cdot \big(-h+\sum_{j=1}^{n+4} e_j\big),\quad \rho_X^\vee(f_i)=m\cdot \big(-h+\sum_{j=1}^{n+4} e_j-2e_i\big).$$
It follows that $\rho_X=m\cdot\rho$.
One checks easily that $\rho$ is injective.
Since $\rho_X(L_a)$ must be ample if $n-3<a<n-1$ and 
$$\rho(L_a)=(n+1-a)H-(n-1-a)\sum_{j=1}^{n+4} E_j,$$ 
it follows that $m>0$. 
\ep

\begin{rmk}\label{rho}\label{X chamber and nearby}

One can check using Proposition \ref{W and rho} that the map $\rho:\Pic(S)\ra \Pic(X)$ introduced in Theorem~\ref{rho map}
satisfies the following properties:
\bi
\item $\rho(K_S)=K_X$ and $\rho$ is $\cW$-equivariant, i.e., $\rho(w\cdot L)=w\cdot\rho(L)$, for every $w\in\cW$ and $L\in\Pic(S)$.
\item With respect to the Coble pairing $(-,-)$ on $\Pic(X)$, we have 
$$(\rho(L),K_X)=(n-1)(L\cdot K_S), \quad  \ \ \forall L\in\Pic(S),$$
$$\rho(K_S^\perp)=2\cdot K_X^{\perp},\quad (\rho(L),\rho(L'))=4(L\cdot L') \quad \ \ \forall L, L'\in K_S^\perp.$$
\ei
In particular, the restriction of the map $\rho:\Pic(S)\ra\Pic(X)$ to $K_S^\perp$ is $2\phi_0$, where $\phi_0:K_S^\perp\ra K_X^\perp$
is the isometry from Notation  \ref{notation:isometry}. Note that $\frac{1}{2}\rho=\phi_0$ is an isometry on $K_S^\perp$ but $\frac{1}{2}\rho$ is not an isometry on $\Pic(S)$ in general: If $L=aK_S+L_1$, $L'=bK_S+L_2$ with $L_1, L_2\in K_S^\perp$, then 
$$(\rho(L),\rho(L'))=4(L\cdot L')+ab((K_X,K_X) -4K^2_S)=4(L\cdot L')-ab(n-5)^2.$$

We record here the following relations for further use:
$$\rho\big(e_i^\perp\big)=(l-f_i)^\perp,$$
$$\rho\big((2h+K)^\perp\big)=l^\perp,$$
$$\rho(h-e_i)=2E_i.$$ 
\end{rmk}

\section{The pseudoeffective cone of $\Bl_{n+4}\PP^{n}$ and the Mori chamber decomposition}\label{section:proof_main_thm}

In this section, we prove Theorem~\ref{thm:main}, which describes $\Eff(\Bl_{n+4}\PP^{n})^{K\leq0}$ as well as the Mori chamber decomposition of $\Mov(\Bl_{n+4}\PP^{n})^{K\leq0}$. Throughout this section, we denote by $X=\Bl_{n+4}\PP^{n}$ the blowup of $\PP^n$ at $n+4$ Cremona-general points, and by $S=\Bl_{n+4}\PP^2$ the blowup of $\PP^2$ at the $n+4$ Gale dual points.

Recall that in Section~\ref{section:blowups2}, we identified a chamber $\cC_0\subset \Nef(S)^{K<0}$ whose corresponding moduli space is $M_{\cC_0}\cong X$. We also described special cones 
\begin{equation}\label{cones_of_S}
    \cC_0\subseteq \Pi \subseteq \cE\subseteq \N^1(S)^{K\leq0}.
\end{equation}
The cone $\cE\subset \Nef(S)^{K\leq0}$ was defined as the closure of the union of the $K$-negative chambers $\cC\subset \Nef(S)^{K<0}$ for which $M_{\cC}\neq \emptyset$, and $\Pi\subset \cE$ as the closure of the union of those $\cC\subset \Nef(S)^{K<0}$ for which $M_{\cC}$ is a small modification of $X$. 
For any chamber $\cC\subset \Pi$, there is a natural identification $\N^1(M_{\cC})=\N^1(X)$, under which 
$$
\Mov(M_\cC)=\Mov(X) \quad \text{ and } \quad \Eff(M_\cC)=\Eff(X).
$$
We will make use of this identification without further mention.

In order to prove Theorem~\ref{thm:main}, we will make use of the linear map 
$$
\rho\colon \N^1(S)\to \N^1(X)
$$
obtained by tensoring with $\RR$ the map $\rho: \Pic(S)\ra \Pic(X)$ introduced in Theorem~\ref{rho map} as a primitive submultiple of the determinant map. We will show that $\rho$ maps the special cones (\ref{cones_of_S}) bijectively onto the cones 
$$
\Nef(X)\subseteq \Mov(X)^{K\leq0}\subseteq  \Eff(X)^{K\leq0}\subseteq \N^1(X)^{K\leq0},
$$
respectively. Moreover, we will show that $\rho$ maps the stability chamber decomposition of $\Pi$, which was described in Sections~\ref{section:moduli} and \ref{section:blowups2}, to the Mori chamber decomposition of $\Mov(\Bl_{n+4}\PP^{n})^{K\leq0}$.

\begin{theorem}\label{main} For $n\geq2$,
let $X=\Bl_{n+4}\PP^{n}$ be the blowup of $\PP^n$ at $n+4$ Cremona-general points, $S=\Bl_{n+4}\PP^2$ the blowup of $\PP^2$ at the $n+4$ Gale dual points, and $\rho: \N^1(S)\to \N^1(X)$ 
the linear map introduced in Theorem~\ref{rho map}. Then the following holds. 
\begin{enumerate}
    \item For any stability chamber $\cC\subseteq \Pi$, 
    $$\rho(\ov{\cC})=\Nef(M_{\cC})\subseteq \N^1(M_{\cC})=\N^1(X).$$
    \item $\rho(\Pi)=\Mov(X)^{K_X\leq0}$.
    \item $\rho(\cE)= \overline{\displaystyle\sum_{E\in \cW\cdot E_1}\RR_{\geq 0}E} = \Eff(X)^{K_X\leq0}$.
\end{enumerate}
\end{theorem}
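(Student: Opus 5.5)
The plan is to prove (1) chamber by chamber, starting from $\cC_0$ and propagating across walls, then to obtain (2) and (3) from (1) together with the explicit descriptions (\ref{conePi}) and (\ref{coneE}) and Lemma~\ref{Lemma_E}.

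For (1), first treat $\cC=\cC_0$. Since $\rho_X=m\rho$ with $m>0$ (Theorem~\ref{rho map}), Lemma~\ref{det 2} shows that $\rho$ maps $\cC_0$ into $\Amp(X)$. By Remark~\ref{walls_of_C0} and \cite[Proposition 1.4]{AM16}, the facets of $\Nef(X)$ are $f_i^\perp$ and $(l-f_i-f_j)^\perp$. By Proposition~\ref{det 3} and the relations in Remark~\ref{rho}, these are exactly the images of the walls $(2(h-e_i)+K)^\perp$ and $(2(h-e_i-e_j)+K)^\perp$ bounding $\cC_0$. Because $\rho$ is a linear isomorphism on $\N^1$, the facets correspond and $\rho(\ov{\cC_0})=\Nef(X)$.

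For a general chamber $\cC\subseteq\Pi$, connect it to $\cC_0$ by a generic path inside $\Pi$. Each wall crossed has $-n+2\le D^2\le -1$, so the corresponding moduli spaces are small modifications of each other. Lemma~\ref{compatible} then identifies the determinant maps $\rho_\cC$ with $\rho_X$ under $\N^1(M_\cC)=\N^1(X)$, so $\rho(\cC)\subseteq\Amp(M_\cC)$ by Lemma~\ref{det 2}. To get equality $\rho(\ov\cC)=\Nef(M_\cC)$, I would show that every facet of $\ov\cC$ maps to a facet of $\Nef(M_\cC)$. There are two kinds of facets:
\begin{itemize}
\item a stability wall, which by Proposition~\ref{det 3} maps to $l_D^\perp$ for a curve class $l_D$ contracted by $\phi_\pm$;
\item a boundary piece of $\Pi$, namely $e^\perp$, $K^\perp$, or $(2C+K)^\perp$ with $C\in\cW\cdot(h-e_1)$.
\end{itemize}
For the second kind I would use $\cW$-equivariance (Remark~\ref{rho}) to reduce to $e_i^\perp$ and $(2(h-e_i)+K)^\perp$, whose images are $(l-f_i)^\perp$ and $E_i$-type contractions. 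By Remark~\ref{on wall-semiample}, $\rho$ of a boundary class is semiample and nonample, so it lies on $\partial\Nef(M_\cC)$. The main obstacle I expect is the local finiteness needed near $K^\perp$ and near boundary points with $L^2=0$. I would handle it with Propositions~\ref{locally finite} and~\ref{locally finite 2}, restricting to the $K<0$ part and taking closures.

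For (2), every movable class of $X$ with $K\le0$ that is ample on some small modification lies in $\rho(\ov\cC)$ for some $\cC\subseteq\Pi$, by (1) and the union over chambers. Conversely, the facets of $\Pi$ map to divisorial or non-movable boundaries. Concretely, $(2C+K)^\perp$ with $C=h-e_i$ maps to a hyperplane on which $\rho(C)=2E_i$ (and its $\cW$-translates) is contracted, so beyond it $M_\cC$ is a blowdown and $\rho$ of the region is not movable. Together with $\cW$-invariance of $\Mov(X)$ (Remark~\ref{rmk:cremona_action}), this gives $\rho(\Pi)=\Mov(X)^{K\le0}$.

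For (3), by Lemma~\ref{Lemma_E}, $\cE=\ov{\cE'}$ is the closed cone spanned by $\cW\cdot(h-e_1)$. Since $\rho(h-e_1)=2E_1$ and $\rho$ is $\cW$-equivariant, $\rho(\cE)=\ov{\sum_{E\in\cW\cdot E_1}\RR_{\ge0}E}$, which lies in $\Eff(X)^{K\le0}$. For the reverse inclusion, I would take an effective class $D$ with $K\cdot D\le0$ and write $D=\rho(L)$ with $L\in\N^1(S)^{K\le0}$. If $L\notin\cE$, some inequality in (\ref{coneE}) fails. If it is $(e\ge0)$ or $(2B+K\ge0)$, reduce by $\cW$ to $e=e_i$ or $B=h$; then $\rho(L)$ is negative on the moving curve $l-f_i$ or $l$ (Remark~\ref{rho}). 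Since these are covering curve families, $D$ cannot be pseudoeffective, which gives the contradiction.
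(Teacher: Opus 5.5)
Your overall route is the paper's. You get $\rho(\cC)\subseteq\Amp(M_\cC)$ from Lemma~\ref{det 2}, then match the bounding hyperplanes of $\ov{\cC}$, $\Pi$ and $\cE$ with supporting hyperplanes of $\Nef(M_\cC)$, $\Mov(X)^{K_X\leq 0}$ and $\Eff(X)^{K_X\leq 0}$. The tools are the same: Proposition~\ref{det 3}, $\cW$-equivariance, the covering curves $l$ and $l-f_1$, and $\rho(h-e_i)=2E_i$ together with Lemma~\ref{Lemma_E}. Your (3), phrased as ``if an inequality of (\ref{coneE}) fails, then $\rho(L)$ is negative on a $\cW$-translate of $l$ or $l-f_1$'', is a clean form of the paper's argument.

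\textbf{The gap is in (1), for the facets $e^\perp$ of $\ov{\cC}$ with $e\in\cW\cdot e_1$.} Remark~\ref{on wall-semiample} only concerns classes that are ample on $S$. Points of $e^\perp\cap\ov{\cC}$ are not ample on $S$, so that remark tells you nothing there. You also cannot ``reduce to $e_i$ by $\cW$-equivariance'' at the level of $\Nef(M_\cC)$, because this cone is not $\cW$-stable. The missing step, which is the paper's, is to pass through a $\cW$-stable cone:
\begin{itemize}
\item $\rho(\ov{\cC})\subseteq\Nef(M_\cC)\subseteq\Eff(X)$, and $\Eff(X)$ is $\cW$-stable by Remark~\ref{rmk:cremona_action};
\item $\rho(e_1^\perp)=(l-f_1)^\perp$ supports $\Eff(X)$, because $l-f_1$ is a covering family;
\item hence $\rho\big((we_1)^\perp\big)=w\cdot(l-f_1)^\perp$ supports $\Eff(X)$, and therefore $\Nef(M_\cC)$, on the side containing $\rho(\ov{\cC})$.
\end{itemize}
You use exactly this in (3), but it is already needed in (1). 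The facets $(2C+K)^\perp$ with $C\in\cW\cdot(h-e_1)$ need no reduction at all. They are stability walls with $D^2\in\{0,-n+1\}$, so Proposition~\ref{det 3} applies to them directly.

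\textbf{In (2), your first sentence does not follow from (1).} Part (1) only concerns small modifications of the form $M_\cC$ and says nothing about an arbitrary one. What (1) gives is the inclusion $\rho(\Pi)\subseteq\Mov(X)^{K_X\leq0}$. The reverse inclusion must come entirely from your ``conversely'' argument: each half-space in (\ref{conePi}) maps onto a half-space containing $\Mov(X)^{K_X\leq0}$. For $(2C+K\geq 0)$ the image is $w\cdot(f_1\geq 0)$, which contains $\Mov(X)$ because $\Mov(X)\subseteq(f_1\geq0)$ and $\Mov(X)$ is $\cW$-invariant. The half-spaces $(e\geq0)$ are handled as above.

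\textbf{Your separate base case for $\cC_0$ is superfluous and, as stated, incomplete.} Knowing that every facet of $\Nef(X)$ is the image of a wall bounding $\cC_0$ does not rule out further facets of $\ov{\cC_0}$. Ruling them out is exactly the description in Remark~\ref{walls_of_C0}, which the paper deduces from this theorem rather than proving it independently. In addition, for $n=2$ the nef cone of the cubic surface $X$ has extra facets dual to the six conics, corresponding to the walls $(2e_i+K)^\perp$. Since your general facet argument already covers $\cC_0$, you can simply drop the base case.
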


\bp
By Remark~\ref{X chamber and nearby}, we have $\rho(K_S^\perp)=K_X^\perp$ and $\rho\big(\N^1(S)^{K_S\leq 0}\big)=\N^1(X)^{K_X\leq 0}$.

Let $\cC\subset \Pi$ be a stability chamber. By Lemma \ref{det 2}, for any line bundle $L\in\cC$, $\rho_\cC(L)\in \Amp(M_\cC)$, so $\rho(\ov{\cC})\subseteq \Nef(M_\cC)\subseteq\Mov(M_\cC)=\Mov(X)$. It follows that $\rho(\Pi)\subseteq\Mov(X)^{K_X\leq0}$. 
In order to prove the reverse inclusion $\Nef(M_\cC)\subseteq \rho(\ov{\cC})$, it suffices to show that $\rho$ maps supporting hyperplanes of $\ov{\cC}$ onto supporting hyperplanes of $\Nef(M_\cC)$. We then conclude that $\rho(\ov{\cC})= \Nef(M_\cC)$. 
Similarly, in order to prove that $\rho(\Pi)=\Mov(X)^{K_X\leq0}$, it suffices to show that $\rho$ maps supporting hyperplanes of $\Pi$ onto supporting hyperplanes of $\Mov(X)^{K_X\leq0}$.

By Theorem~\ref{prop:special_loci} and the discussion in Subsection \ref{subsection:special_cones}, there are three types of supporting hyperplanes of $\ov{\cC}$ (here we are using the assumption that $\cC\subset \Pi$):
\begin{enumerate}
    \item[(a)] Hyperplanes of the form $(2D+K)^\perp$, where $D$ is a numerical rational class with $-n+1\leq D^2\leq0$.
    \item[(b)] Hyperplanes of the form $e^\perp$, where $e\in \cW\cdot e_1$.
    \item[(c)] The hyperplane $K^\perp$. 
\end{enumerate}
Similarly, by the description of $\Pi$ in (\ref{conePi}), there are three types of supporting hyperplanes of $\Pi$: those of type (a) above, with $D^2=0$ or $-n+1$, and those of types (b) and (c) above. We have already noted that $\rho(K^\perp)=K_X^\perp$. 

Consider a supporting hyperplane $(2D+K)^\perp$ of $\Nef(M_\cC)$ of type (a). 
After replacing $D$ with $-K-D$ if necessary, we may assume that $\cC\subset(2D+K)^{>0}$. 
In this case, we must have $-n+2\leq D^2\leq0$, and the subset $\PP_{-K-D}\subseteq M_{\cC}$ from Theorem~\ref{prop:special_loci} satisfies
$\dim(\PP_{-K-D})=n-1+D^2>0$.
By Proposition~\ref{det 3}, 
$$\rho_{\cC}\big((2D+K)^\perp\big)=l_{-K-D}^\perp\subset \N^1(M_{\cC}),$$ 
where $l_{-K-D}\in \N_1(M_{\cC})$ denotes the class of a line in $\PP_{-K-D}$. 
Moreover, by Proposition~\ref{reduction map}, there is an elementary contraction $\phi: M_{\cC}\ra M^{\mu ss}_{L_0}$ mapping $\PP_{-K-D}$ to a point. (Here $L_0\in (2D+K)^{\perp}$ is a polarization lying in the closure of $\cC$, and not
lying in any other wall.) Therefore, $l_{-K-D}^\perp$ as a supporting hyperplane of $\Nef(M_{\cC})$. 
If $D^2=0$, then $(2D+K)^\perp$ is also a supporting hyperplane of $\Pi$.  
In this case, $\phi: M_{\cC}\ra M^{\mu ss}_{L_0}$ is the blowup of a point with exceptional divisor $\PP_{-K-D}\cong \PP^{n-1}$, and thus $l_{-K-D}^\perp$ is a supporting hyperplane of $\Mov(X)^{K_X\leq0}$.

Consider now a supporting hyperplane $e^\perp$ of $\ov{\cC}$ of type (b), and note that $e^\perp$ is also a supporting hyperplane of $\Pi$ and $\cE$. Since $\Nef(M_\cC)\subseteq\Mov(X)^{K_X\leq0}\subseteq\Eff(X)^{K_X\leq0}$, it suffices to prove that $\rho(e^\perp)$ is a supporting hyperplane of $\Eff(X)^{K_X\leq0}$. 
Let $w\in\cW$ be such that $e=w\cdot e_1$. By Remark~\ref{X chamber and nearby}, $\rho$ is $\cW$-equivariant, and so $\rho(e^\perp)=w\cdot \rho(e_1^\perp)$. By Remark~\ref{rmk:cremona_action}, there is another blowup $X'$ of $\PP^n$ at a configuration of $n+4$ Cremona-general points and a small modification $\varphi:X'\dashrightarrow X$, such that $w\in\cW$ is the composition of the induced isomorphism $\varphi^*:\Pic(X)\to \Pic(X')$ with the identification of $\Pic(X')$ and $\Pic(X)$ obtained by identifying their natural bases $\{H', E'_1, \dots, E'_{n+4}\}$ and $\{H, E_1, \dots, E_{n+4}\}$. Therefore, it is enough to show that $\rho(e_1^\perp)$ is a supporting hyperplane of $\Eff(X)^{K_X\leq0}$. But this follows from the properties of $\rho$ observed in Remark~\ref{X chamber and nearby}, where we noted that $\rho(e_1^\perp)=(l-f_1)^\perp$. The class $l-f_1\in \N_1(X)$ represents the strict transform of a general line in $\PP^n$ through the blown up point $P_1$, and hence $(l-f_1)^\perp$ is a supporting hyperplane of $\Eff(X)^{K_X\leq0}$. This completes the proof of (1) and (2). 

Now we prove (3). Recall from Lemma~\ref{Lemma_E} that 
$$
\cE \ = \ \overline{\sum_{C\in \cW\cdot (h-e_1)}\RR_{\geq 0}C}.
$$
By Remark~\ref{X chamber and nearby}, $\rho(h-e_i)=2E_i$. Since $\rho$ is $\cW$-equivariant, we conclude that 
$$
\rho(\cE) \ = \ \overline{\sum_{E\in \cW\cdot E_1}\RR_{\geq 0}E}\ \subseteq \ \Eff(X)^{K_X\leq0}.
$$

In order to prove the equality $\rho(\cE)=\Eff(X)^{K_X\leq0}$, we proceed as before, showing that $\rho$ maps any supporting hyperplane of $\cE$ to a supporting hyperplane of $\Eff(X)^{K_X\leq0}$.  
By (\ref{coneE}), there are three types of supporting hyperplanes of $\cE$:
\begin{enumerate}
    \item[(a)] Hyperplanes of the form $(2B+K)^\perp$, where $B\in \cW\cdot h$.
    \item[(b)] Hyperplanes of the form $e^\perp$, where $e\in \cW\cdot e_1$.
    \item[(c)] The hyperplane $K^\perp$. 
\end{enumerate}
We have already noted that $\rho(K^\perp)=K_X^\perp$, and that $\rho(e^\perp)$ is a supporting hyperplane of $\Eff(X)^{K_X\leq0}$ for every $e\in \cW\cdot e_1$.
So it remains to consider hyperplanes of the form $(2B+K)^\perp$, where $B\in \cW\cdot h$. Arguing as we did earlier when we considered hyperplanes of the form $e^\perp$, 
we reduce to considering a unique supporting hyperplane $(2h+K)^\perp$.
By Remark~\ref{X chamber and nearby}, $\rho\big((2h+K)^\perp\big)=l^\perp\subset \N^1(X)$, where $l$ denotes the class of the strict transform of a general line in $\PP^{n}$. This is a supporting hyperplane of $\Eff(X)^{K_X\leq0}$, and this finishes the proof of (3).  
\ep

\begin{proof}[{Proof of Theorem~\ref{thm:main}}]
Part (1) and the first statement of part (2) follow immediately from Theorem~\ref{main}. 
It remains to prove that, for any stability chamber $\cC$ such that $\ov{\cC}\setminus\{0\}\subseteq\Pi^{K<0}$,
$\ov{\cC}$ is finitely generated.

For any  $a\in\RR_{>0}$, the intersection of $\Nef(S)$ with the hyperplane $H_a:=\{\gamma\in\N^1(S)\ |\ h\cdot\gamma=a\}$ is a compact set.
Let $\cC$ be a chamber such that $\ov{\cC}\setminus\{0\}\subseteq\Pi^{K<0}$, set $\ov{\cC}_a:=\ov{\cC}\cap H_a$, and note that $\ov{\cC}_a$ is also compact.
In order to prove that $\ov{\cC}$ is a polyhedral cone, it suffices to prove that,
for any $\alpha\in \ov{\cC}_a$, there is an open neighborhood $\cU$ of $\alpha$ such that $\cU$ intersects only finitely many hyperplanes of the following types:
\bi
\item[(a) ] $(2D+K)^\perp$, where $D$ is a numerical rational class with $-n\leq D^2\leq1$.
\item[(b) ] $e^\perp$, where $e\in\cW\cdot e_1$. 
\ei
For hyperplanes of type (b), the statement follows from the Cone theorem, since the rays $\RR_{\geq0}{e}$ are locally discrete in the half space $\N^1(S)^{K<0}$. So we only have to consider hyperplanes of type (a), i.e., stability walls. 
Let $L\in \Nef(S)^{K<0}$.
If $L^2>0$, then the result follows from Proposition~\ref{locally finite}.
If $L^2=0$, then $L\in \RR_{>0}C$ for some $C\in \cW\cdot (h-e_1)$ by Corollary~\ref{FDomain}.
Note however that $\cW\cdot (h-e_1)\cap\Pi=\emptyset$, and hence $L\not\in \ov{\cC}$.
\end{proof}

In the special case when $k=n+4=9$, by Remark~\ref{generators of cone E for 9 points}, we have the following corollary of Theorem~\ref{main}.

\begin{corollary}
Let $X=\Bl_9\PP^5$. Then 
$$
\Eff(X) \ = \ \Eff(X)^{K\leq 0} \ = \  \RR_{\geq 0}(-K_X) + \sum_{E\in \cW\cdot E_1}\RR_{\geq 0}E.
$$
\end{corollary}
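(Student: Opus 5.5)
The plan is to combine Theorem~\ref{main}(3) with the explicit description of $\cE$ for nine points in Remark~\ref{generators of cone E for 9 points}, and then to check separately that nothing of $\Eff(X)$ lies in the half-space $(K_X>0)$.

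Here $k=9$ and $n=5$. By Remark~\ref{generators of cone E for 9 points},
\[
\cE=\RR_{\geq0}(-K_S)+\sum_{C\in\cW\cdot(h-e_1)}\RR_{\geq0}C ,
\]
and this sum is already closed. Apply $\rho$, which is a linear isomorphism $\N^1(S)\to\N^1(X)$ by Theorem~\ref{rho map}. By Remark~\ref{rho} we have $\rho(K_S)=K_X$, $\rho(h-e_i)=2E_i$, and $\rho$ is $\cW$-equivariant. Hence
\[
\rho(\cE)=\RR_{\geq0}(-K_X)+\sum_{E\in\cW\cdot E_1}\RR_{\geq0}E ,
\]
and this cone is closed. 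Theorem~\ref{main}(3) identifies $\rho(\cE)$ with $\Eff(X)^{K_X\leq0}$, which gives the second equality.

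It remains to show $\Eff(X)=\Eff(X)^{K\leq0}$, i.e.\ that every pseudoeffective class $D$ satisfies $(K_X,D)\leq0$ for the Coble pairing. We use $\rho$ to transport this to $S$. By Remark~\ref{rho}, $(\rho(L),K_X)=(n-1)(L\cdot K_S)$, so the half-space $(K_X\leq 0)$ corresponds to $(K_S\leq0)$ on $S$. On $S=\Bl_9\PP^2$ the class $-K_S$ is nef and effective.

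The proposed argument is a supporting-hyperplane argument. The cone $\rho(\cE)$ is a closed convex cone contained in $\Eff(X)$. Its boundary inside the region $(K_X<0)$ consists of supporting hyperplanes of $\Eff(X)$; this was established in the proof of Theorem~\ref{main}. The hyperplane $K_X^\perp$ meets $\rho(\cE)$ only along the ray $\RR_{\geq0}(-K_X)$, because $\cE\cap K_S^\perp=\RR_{\geq0}(-K_S)$ by Remark~\ref{generators of cone E for 9 points} and Lemma~\ref{E positive surface case}(4).

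Suppose, for contradiction, that some class $D\in\Eff(X)$ has $(K_X,D)>0$. Consider the segment joining $D$ to a class in the interior of $\rho(\cE)$. It must leave $\rho(\cE)$ through a boundary point. That point cannot lie in $(K_X<0)$, since those boundary points lie on supporting hyperplanes of $\Eff(X)$; so it lies in $(K_X=0)$, i.e.\ on the ray $\RR_{\geq0}(-K_X)$. We would then need $-K_X$ to be an interior point of the face it determines, which is the degenerate case.

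The main obstacle is ruling out this degenerate case near $-K_X$. To handle it, I would exhibit a movable curve class $\gamma$ on $X$ with $\gamma\cdot D$ proportional to $-(K_X,D)$. The candidate is the class of the strict transform of the elliptic normal curve of degree $6$ in $\PP^5$ through the nine points, Gale dual to the pencil of cubics through $Q_1,\dots,Q_9$. Such a class should satisfy $\gamma\cdot H=6$ and $\gamma\cdot E_i=1$, and curves in this class sweep out $X$. Pairing with it then gives exactly the inequality needed to conclude that $\Eff(X)\subseteq(K_X\leq0)$.
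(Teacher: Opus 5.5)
Your derivation of the second equality is the paper's. You apply Theorem~\ref{main}(3) to the description of $\cE$ in Remark~\ref{generators of cone E for 9 points}, using $\rho(K_S)=K_X$, $\rho(h-e_1)=2E_1$ and $\cW$-equivariance. The gap is in the step you call the main obstacle: the elliptic-sextic family you invoke does not exist. Nine very general points of $\PP^2$ lie on a \emph{unique} cubic, not on a pencil. Correspondingly, only finitely many elliptic normal sextics of $\PP^5$ pass through $P_1,\dots,P_9$, since the parameter count is $36-9\cdot 4=0$. Their strict transforms have class $\gamma=6l-\sum f_i$ with $-K_X\cdot\gamma=0$. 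They form a finite set, do not sweep out $X$, and give no inequality on effective divisors; a divisor containing such a curve may be negative on it. Your identity $\gamma\cdot D=-\tfrac14(K_X,D)$ is correct, but it only restates the claim as ``$\gamma\in\Eff(X)^\vee$'', which is exactly what has to be shown.

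The missing step is an elementary convexity argument that needs neither curves nor supporting hyperplanes. You already have
\[
\Eff(X)\cap K_X^\perp=\Eff(X)^{K\leq0}\cap K_X^\perp=\rho(\cE\cap K_S^\perp)=\RR_{\geq0}(-K_X).
\]
Suppose some $D\in\Eff(X)$ had $(K_X,D)>0$. Join it to the big class $H$, which is interior to $\Eff(X)$ and satisfies $(K_X,H)=-24<0$. The point of the segment $[H,D]$ lying on $K_X^\perp$ sits on an open segment from an interior point to a point of the closed convex cone $\Eff(X)$. It is therefore itself interior to $\Eff(X)$. Then $\Eff(X)\cap K_X^\perp$ would contain a nonempty open subset of the $9$-dimensional space $K_X^\perp$, contradicting that it is a ray.

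The same observation disposes of your ``degenerate case'' directly. The exit point $B$ lies strictly between $D_0$ and $D$, with $D_0$ interior to $\rho(\cE)\subseteq\Eff(X)$, so $B$ is automatically interior to $\Eff(X)$ and cannot lie on the ray $\RR_{\geq0}(-K_X)$. This convexity step is what the paper leaves implicit when it deduces the corollary from Theorem~\ref{main} and Remark~\ref{generators of cone E for 9 points}.
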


\section*{References}

\bibliographystyle{alpha}

\begin{biblist}



\bib{AM16}{article}{  
 AUTHOR = {Araujo, Carolina},
AUTHOR = {Massarenti, Alex},
     TITLE = {Explicit log {F}ano structures on blow-ups of projective
              spaces},
   JOURNAL = {Proc. Lond. Math. Soc. (3)},
    VOLUME = {113},
      YEAR = {2016},
    NUMBER = {4},
     PAGES = {445--473},
}

\bib{AC17}{article}{ 
AUTHOR = {Araujo, Carolina}, 
AUTHOR = {Casagrande, Cinzia},
     TITLE = {On the {F}ano variety of linear spaces contained in two
              odd-dimensional quadrics},
   JOURNAL = {Geom. Topol.},
    VOLUME = {21},
      YEAR = {2017},
    NUMBER = {5},
     PAGES = {3009--3045},
}

\bib{AFKM21}{article}{
    AUTHOR = {Araujo, Carolina}, 
    AUTHOR = {Fassarella, Thiago},
    AUTHOR = {Kaur, Inder},
    AUTHOR = {Massarenti, Alex},
     TITLE = {On automorphisms of moduli spaces of parabolic vector bundles},
   JOURNAL = {Int. Math. Res. Not. IMRN},
      YEAR = {2021},
    NUMBER = {3},
     PAGES = {2261--2283},
}
	
\bib{Batyrev-Popov}{article}{  
    AUTHOR = {Batyrev, Victor V.},
    AUTHOR = {Popov, Oleg N.},
  TITLE = {The {C}ox ring of a del {P}ezzo surface},
 BOOKTITLE = {Arithmetic of higher-dimensional algebraic varieties ({P}alo
              {A}lto, {CA}, 2002)},
    SERIES = {Progr. Math.},
    VOLUME = {226},
     PAGES = {85--103},
 PUBLISHER = {Birkh\"auser Boston, Boston, MA},
      YEAR = {2004},
}

\bib{Bauer}{article}{  
    author = {Bauer, S.},
    title = {Parabolic bundles, elliptic surfaces and $\text{SU}(2)$-representation spaces of genus zero {F}uchsian groups},
    journal = {Math. Ann.},
    year = {1991},
    volume = {290},
    pages={509--526},
}

\bib{BDP16}{article}{  
AUTHOR = {Brambilla, M. C.},
AUTHOR = {Dumitrescu, O.}, 
AUTHOR = {Postinghel, E.},
     TITLE = {On the effective cone of {$\Bbb P^n$} blown-up at {$n+3$}
              points},
   JOURNAL = {Exp. Math.},
    VOLUME = {25},
      YEAR = {2016},
    NUMBER = {4},
     PAGES = {452--465},
}

\bib{BDPS}{article}{  
AUTHOR = {Brambilla, M. C.},
AUTHOR = {Dumitrescu, O.}, 
AUTHOR = {Postinghel, E.},
AUTHOR = {Santana S\'anchez, L. J.},
     TITLE = {Birational geometry of blowups via {W}eyl chamber decompositions and actions on curves},
   JOURNAL = {arXiv preprint arXiv:2410.18008},
    VOLUME = {},
      YEAR = {2024},
    NUMBER = {},
     PAGES = {},
}

\bib{CCF_v1}{article}{  
AUTHOR = {Casagrande, Cinzia},
AUTHOR = {Codogni, Giulio}, 
AUTHOR = {Fanelli, Andrea},
     TITLE = {The blow-up of {$\mathbb P^4$} at 8 points and its {F}ano
              model, via vector bundles on a del {P}ezzo surface},
   JOURNAL = {arXiv:1707.09152v1},
    VOLUME = {},
      YEAR = {2017},
    NUMBER = {},
     PAGES = {},
}

\bib{CCF}{article}{  
AUTHOR = {Casagrande, Cinzia},
AUTHOR = {Codogni, Giulio}, 
AUTHOR = {Fanelli, Andrea},
     TITLE = {The blow-up of {$\mathbb P^4$} at 8 points and its {F}ano
              model, via vector bundles on a del {P}ezzo surface},
   JOURNAL = {Rev. Mat. Complut.},
    VOLUME = {32},
      YEAR = {2019},
    NUMBER = {2},
     PAGES = {475--529},
}



\bib{CT}{article}{
AUTHOR = {Castravet, Ana-Maria},
AUTHOR = {Tevelev, Jenia},
TITLE = {Hilbert's 14th problem and {C}ox rings},
   JOURNAL = {Compos. Math.},
    VOLUME = {142},
      YEAR = {2006},
    NUMBER = {6},
     PAGES = {1479--1498},
}

\bib{Coble1}{article}{
    AUTHOR = {Coble, A.},
     TITLE = {Point sets and allied {C}remona groups. {I}},
   JOURNAL = {Trans. Amer. Math. Soc.},
    VOLUME = {16},
      YEAR = {1915},
    NUMBER = {2},
     PAGES = {155--198},
}

\bib{Coble2}{article}{
    AUTHOR = {Coble, A.},
     TITLE = {Point sets and allied {C}remona groups. {II}},
   JOURNAL = {Trans. Amer. Math. Soc.},
    VOLUME = {17},
      YEAR = {1916},
    NUMBER = {3},
     PAGES = {345--385},
}


\bib{Coble3}{article}{
    AUTHOR = {Coble, A.},
     TITLE = {Point sets and allied {C}remona groups. {III}},
   JOURNAL = {Trans. Amer. Math. Soc.},
    VOLUME = {18},
      YEAR = {1917},
    NUMBER = {3},
     PAGES = {331--372},
}






\bib{deF}{article}{
AUTHOR = {de Fernex, Tommasso},
TITLE = {On the {M}ori cone of blow-ups of the plane},
JOURNAL = {arXiv preprint  arXiv:1001.5243},
    VOLUME = {},
      YEAR = {2010},
    NUMBER = {},
     PAGES = {},
}

\bib{Debarre}{book}{
   author={Debarre, Olivier},
   title={Higher-dimensional algebraic geometry},
   series={Universitext},
   publisher={Springer-Verlag, New York},
   date={2001},
   pages={xiv+233},
}
    
\bib{Dolgachev}{incollection}{
    AUTHOR = {Dolgachev, Igor V.},
     TITLE = {Weyl groups and {C}remona transformations},
 BOOKTITLE = {Singularities, {P}art 1 ({A}rcata, {C}alif., 1981)},
    SERIES = {Proc. Sympos. Pure Math.},
    VOLUME = {40},
     PAGES = {283--294},
 PUBLISHER = {Amer. Math. Soc., Providence, RI},
      YEAR = {1983},
}


\bib{DO}{article}{
    AUTHOR = {Dolgachev, Igor V.},
    AUTHOR = {Ortland, David},
     TITLE = {Point sets in projective spaces and theta functions},
   JOURNAL = {Ast\'erisque},
    NUMBER = {165},
      YEAR = {1988},
     PAGES = {210},
}

\bib{DUrso}{article}{  
AUTHOR = {D'Urso, Luíze},
     TITLE = {On the Nef cones of blowups of the projective plane},
   JOURNAL = {arXiv preprint arXiv:2412.15460},
    VOLUME = {},
      YEAR = {2024},
    NUMBER = {},
     PAGES = {},
}

\bib{Ellingsrud_Gottsche}{article}{  
    AUTHOR = {Ellingsrud, Geir},
    AUTHOR = {G\"ottsche, Lothar},
     TITLE = {Variation of moduli spaces and {D}onaldson invariants under change of polarization},
   JOURNAL = {J. Reine Angew. Math.},
   VOLUME = {467},
      YEAR = {1995},
     PAGES = {1--49},
}

\bib{EP}{article}{
AUTHOR = {Eisenbud, David},
AUTHOR = {Popescu, Sorin},
     TITLE = {The projective geometry of the {G}ale transform},
   JOURNAL = {J. Algebra},
    VOLUME = {230},
      YEAR = {2000},
    NUMBER = {1},
     PAGES = {127--173},
}

\bib{G}{article}{  
AUTHOR = {G\"ottsche, Lothar},
     TITLE = {Change of polarization and {H}odge numbers of moduli spaces of
              torsion free sheaves on surfaces},
   JOURNAL = {Math. Z.},
    VOLUME = {223},
      YEAR = {1996},
    NUMBER = {2},
     PAGES = {247--260},
}

\bib{FQ}{article}{  
    AUTHOR = {Friedman, Robert},
    AUTHOR = {Qin, Zhenbo},
     TITLE = {Flips of moduli spaces and transition formulas for {D}onaldson
              polynomial invariants of rational surfaces},
   JOURNAL = {Comm. Anal. Geom.},
    VOLUME = {3},
      YEAR = {1995},
    NUMBER = {1-2},
     PAGES = {11--83},
      ISSN = {1019-8385,1944-9992},
}

\bib{HK}{article}{     
AUTHOR = {Hu, Yi},
AUTHOR = {Keel, Sean},
     TITLE = {Mori dream spaces and {GIT}},
   JOURNAL = {Michigan Math. J.},
    VOLUME = {48},
      YEAR = {2000},
     PAGES = {331--348},
}

\bib{HL}{book}{  
AUTHOR = {Huybrechts, Daniel},
AUTHOR = {Lehn, Manfred},
     TITLE = {The geometry of moduli spaces of sheaves},
    SERIES = {Cambridge Mathematical Library},
   EDITION = {Second},
 PUBLISHER = {Cambridge University Press, Cambridge},
      YEAR = {2010},
}

\bib{KM}{book}{
    AUTHOR = {Koll\'{a}r, J\'{a}nos},
    AUTHOR = { Mori, Shigefumi},
     TITLE = {Birational geometry of algebraic varieties},
    SERIES = {Cambridge Tracts in Mathematics},
    VOLUME = {134},
 PUBLISHER = {Cambridge University Press, Cambridge},
      YEAR = {1998},
     PAGES = {viii+254},
      ISBN = {0-521-63277-3},
   MRCLASS = {14E30},
       DOI = {10.1017/CBO9780511662560},
       URL = {https://doi.org/10.1017/CBO9780511662560},
}

\bib{LO16}{article}{     
AUTHOR = {Lesieutre, John},
AUTHOR = {Ottem, John Christian},   
     TITLE = {Curves disjoint from a nef divisor},
   JOURNAL = {Michigan Math. J.},
    VOLUME = {65},
      YEAR = {2016},
    NUMBER = {2},
     PAGES = {321--332},
}

\bib{Mukai01}{article}{  
AUTHOR = {Mukai, Shigeru},
TITLE = {Counterexample to {Hi}lbert's fourteenth problem for the 3-dimensional additive group},
JOURNAL = {RIMS preprint no. 1343},
year={2001},
  URL = {https://www.kurims.kyoto-u.ac.jp/preprint/file/RIMS1343.pdf},
} 

\bib{Mukai}{article}{  
AUTHOR = {Mukai, Shigeru},
TITLE = {Finite generation of the Nagata invariant rings in the A-D-E cases},
JOURNAL = {RIMS preprint},
year={2006},
  URL = {https://www.kurims.kyoto-u.ac.jp/preprint/file/RIMS1502.pdf},
} 




\bib{MW}{article}{
   AUTHOR = {Matsuki, Kenji},
   AUTHOR = {Wentworth, Richard},
     TITLE = {Mumford-{T}haddeus principle on the moduli space of vector
              bundles on an algebraic surface},
   JOURNAL = {Internat. J. Math.},
    VOLUME = {8},
      YEAR = {1997},
    NUMBER = {1},
     PAGES = {97--148},
 }


\bib{maruyama}{article}{
  AUTHOR = {Maruyama, Masaki},
  TITLE = {Stable vector bundles on an algebraic surface},
  JOURNAL = {Nagoya Mathematical Journal},
  VOLUME = {58},
  PAGES = {25--68},
  YEAR = {1975},
  PUBLISHER = {Cambridge University Press}
}

\bib{Nagata}{article}{
    AUTHOR = {Nagata, Masayoshi},
     TITLE = {On the {$14$}-th problem of {H}ilbert},
   JOURNAL = {Amer. J. Math.},
    VOLUME = {81},
      YEAR = {1959},
     PAGES = {766--772},
}

\bib{Ottem_master}{article}{  
AUTHOR = {Ottem, John},
TITLE = {Cox rings of projective varieties},
JOURNAL = {Master Thesis},
year={2009},
  URL = {https://www.duo.uio.no/bitstream/handle/10852/10769/Master.pdf},
} 

\bib{SX23}{article}{  
AUTHOR = {Stenger, Isabel},
AUTHOR = {Xie, Zhixin}, 
     TITLE = {Cones of divisors on {$\PP^3$} blown up at eight very general points},
   JOURNAL = {arXiv preprint arXiv:2303.12005},
    VOLUME = {},
      YEAR = {2023},
    NUMBER = {},
     PAGES = {},
}




\end{biblist}

\end{document}